 \providecommand{\og}{``}
\providecommand{\fg}{''} \providecommand{\smfandname}{and}
\def\crulefill{\leavevmode\leaders\hrule height 1pt\hfill\kern 0pt}
\long\def\QUERY#1{%
\leavevmode\newline%
\noindent$\star\star\star$\thinspace\textsf{Comment/Query}\crulefill\newline%
   \space #1\newline\hbox to 120mm{\crulefill}$\star\star\star$\newline}
\newtheorem{Theorem}{Theorem}[section]
\newtheorem{Lemma}[Theorem]{Lemma}
\newtheorem{Cor}[Theorem]{Corollary}
\newtheorem{Prop}[Theorem]{Proposition}
\theoremstyle{definition}
\newtheorem{Defn}[Theorem]{Definition}
\newtheorem{Cond}[Theorem]{Condition}
\newtheorem{rem}[Theorem]{Remark}
\newtheorem{Assum}[Theorem]{Assumption}
\numberwithin{equation}{section}
\theoremstyle{definition}
\newtheorem{THEOREM}{Theorem}
\newtheorem{CONJECTURE}{Conjecture}
\def\enumerate{\begingroup\ifnum\@enumdepth>3\@toodeep\else
      \advance\@enumdepth\@ne
      \edef\@enumctr{enum\romannumeral\the\@enumdepth}%
      \topsep\z@\parskip\z@
      \list{\csname label\@enumctr\endcsname}
        {\@nmbrlisttrue\let\@listctr\@enumctr
         \parsep\z@\itemsep\z@\topsep\z@
         \setcounter{\@enumctr}{0}
         \def\makelabel##1{\hss\llap{\rm ##1}}
       }\fi}
\let\bar=\overline
\let\epsilon=\varepsilon
\def\({\big(}
\def\){\big)}
\def\N{\mathbb N}
\def\Z{\mathbb Z}
\def\0{\underline{0}}
\DeclareMathOperator{\End}{End}
\DeclareMathOperator{\Rad}{Rad}
\def\s{\mathfrak s}
\def\t{\mathfrak t}
\def\Hom{\text{Hom}}
\def\U{\mathbf U}
  \gdef\set#1{\mathinner{\lbrace\,{\mathcode`\|"8000%
                                   \let|\midvert #1}\,\rbrace}}
  \gdef\seT#1{\mathinner{\Big\lbrace\,{\mathcode`\|"8000%
                                   \let|\midverT #1}\,\Big\rbrace}}
\def\midvert{\egroup\mid\bgroup}
\def\midverT{\egroup\,\Big|\,\bgroup}
\def\Set[#1]#2|#3|{\Big\{\ #2\ \Big| \
           \vcenter{\hsize #1mm\centering #3}\Big\}}
\def\Z{\mathbb Z}
\def\C{\mathbb C}
\def\mf{\mathbf f}
\def\qed{\hfill\mbox{$\Box$}}
\def\N{\mathbb{N}}
\def\Z{\mathbb{Z}}
\def\Hom{{\rm Hom}}
\def\Set{{\rm Set}}
\def\s{\mathfrak s}%
\def\t{\mathfrak t}%
\def\Hom{\text{Hom}}%
\def\U{\mathbf U}%
\def\textsf#1{{\textit{#1}}}%
\definecolor{white}{HTML}{FFFFFF}
\definecolor{darkblue}{HTML}{111199}
\definecolor{darkgreen}{HTML}{336633}
\definecolor{darkred}{HTML}{993333}
\definecolor{darkpurple}{HTML}{995599}
\newcommand{\frp}{\mathfrak p}
\begin{document}
\baselineskip16pt
\title[{\tiny  Decomposition numbers of cyclotomic Brauer algebras }]{Decomposition numbers of the cyclotomic Brauer algebra over the complex field, II}
 
\author{Hebing  Rui}
\email{hbrui@tongji.edu.cn}
\author{Linliang Song}
\email{llsong@tongji.edu.cn}
\address{School of Mathematical Science, Tongji University, Shanghai, 200092, China.}\thanks{ H. Rui is supported partially by NSFC (grant No.  11971351).  L. Song is supported partially by NSFC (grant No.  12071346).}
\date{\today}
\sloppy

\begin{abstract}  Following Nazarov’s suggestion in~\cite{Na1}, the cyclotomic Nazarov-Wenzl algebra~\cite{AMR}  is referred to as the cyclotomic Brauer algebra.
This paper focuses on computing the decomposition numbers of the cyclotomic Brauer algebra over $\mathbb{C}$ with arbitrary parameters. We show that   these decomposition numbers can be expressed in terms of  the parabolic Kazhdan–Lusztig polynomials of type $D_n$, with a parabolic subgroup of type $A$, under Condition~\ref{keyconj}.
\end{abstract}
\maketitle
 \baselineskip16pt 
 \section{Introduction}
Throughout this paper, we work over the complex field $\C$. All algebras are assumed to be defined over $\C$.

This paper is a continuation of \cite{RS-cyc, GRX}. 
Our objective is to compute the decomposition numbers of the cyclotomic Brauer algebra  $B_{k,r}( (-1)^k \mathbf{u}_k)$, defined  as in Definition \ref{definition of Na}, for arbitrary parameters $\mathbf u_k=(u_1, u_2, \ldots, u_k)$  under  Condition~\ref{keyconj}. Here, the sign $(-1)^k$ is  included purely for technical reasons, as demonstrated in Theorem \ref{Thm:subisok2k1}, and 
 \begin{equation}\label{minusu}(-1)^k\mathbf{u}_k = ((-1)^ku_1, (-1)^ku_2, \ldots, (-1)^k u_k).\end{equation} 
We achieve this by utilizing   parabolic Kazhdan-Lusztig polynomials of type  $D_n$  with a parabolic subgroup of type  $A$~\cite{So}.
To outline  the main ideas of the paper, we first introduce some necessary concepts. 

Given two elements $a,b$ in $\mathbb C$, following \cite{ES}, we say that $a$ is $r$-disjoint from $b$ if either  $a\pm b\notin \mathbb Z$ or  $  m=a\pm b\in \mathbb Z$ and $ |m|\ge r$. We further  say that $u_{j+1}$ is $r$-disjoint from $\mathbf u_j:=(u_1, \ldots, u_j)$ if $u_{j+1}$ is disjoint from $u_i$ for all $1\le i\le j$.  

Unless otherwise stated, we will simply say that  $a$ is disjoint from $b$ rather than $a$ is $r$-disjoint from $b$  if the meaning can be understood from the context.  

The following result represents the  first step towards our goal.  It  is inspired by the influential work of Ehrig and Stroppel in  \cite{ES},  where the case  $k=1$ was studied.

\begin{THEOREM}
\label{Thm:subisok2k1}
Suppose that $u_{j+1}$ is disjoint from $\mathbf u_j$   for all $k\le j\le 2k-1$. 
   Then there exists 
 an idempotent $e$ in 
 $ B_{2k,r}( \mathbf u_{2k})$ such that $B_{k,r}((-1)^k\mathbf{u}_k) \cong eB_{2k,r}( \mathbf u_{2k})e $.
\end{THEOREM}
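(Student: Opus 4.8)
The plan is to realise the idempotent inside $B_{2k,r}(\mathbf u_{2k})$ using its Jucys--Murphy elements and the block decomposition they induce, and then to identify the resulting truncation with $B_{k,r}((-1)^k\mathbf u_k)$ by matching cellular (equivalently, seminormal) data.

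\emph{Locating $e$.} Write $A=B_{2k,r}(\mathbf u_{2k})$ and let $x_1,\dots,x_r$ be its commuting Jucys--Murphy elements, spanning a commutative subalgebra $\mathcal X=\C[x_1,\dots,x_r]\subseteq A$. By~\cite{AMR}, $A$ is cellular with a basis indexed by pairs of $r$-up--down tableaux, and each $x_j$ acts on this basis triangularly with diagonal entries the contents of the tableaux, so every character of $\mathcal X$ is a content sequence. The hypothesis that $u_{j+1}$ is disjoint from $\mathbf u_j$ for all $k\le j\le 2k-1$ forces the set of contents occurring on $r$ strands to split into pairwise non-interacting pieces $\mathcal B_0,\mathcal B_1,\dots,\mathcal B_k$, where $\mathcal B_0$ consists of the contents reachable from $u_1,\dots,u_k$ and $\mathcal B_\ell$ of those reachable from $u_{k+\ell}$, in such a way that no content sequence on $r$ strands can have the two entries of an add/remove pair lying in different pieces. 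Let $e\in\mathcal X\subseteq A$ be the sum of the primitive idempotents of $\mathcal X$ whose character takes all its values in $\mathcal B_0$.

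\emph{The cell datum of $eAe$.} Since $e\in A$, the truncation $eAe$ is cellular. Inspecting the cellular basis of $A$, one finds that $eC^{\lambda}\neq 0$ exactly when $\lambda^{(m)}=\emptyset$ for all $m>k$ — because a box surviving in a component $m>k$ forces its tableau to visit $\mathcal B_{m-k}$ — so the poset of $eAe$ is identified with the set of $k$-multipartitions of the relevant sizes; and for such $\lambda$ the cell module $eC^{\lambda}$ has basis exactly the up--down tableaux of shape $\lambda$ that never touch components $k+1,\dots,2k$, carrying the restriction of the cellular bilinear form. These indexing sets are literally those of $B_{k,r}$: pairs of $r$-up--down tableaux of $k$-multipartitions. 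In particular $\dim eAe=\sum_{\mu}(f^{\mu})^{2}=(2r-1)!!\,k^{r}=\dim B_{k,r}((-1)^k\mathbf u_k)$, where $\mu$ runs over $k$-multipartitions and $f^{\mu}$ counts $r$-up--down tableaux of shape $\mu$.

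\emph{Matching the data — the main obstacle.} It remains to check that this bijection of indexing sets upgrades to an isomorphism of (cellular, or seminormal) algebras. The content of an up--down step is $u_m+b-a$ for the addition of a box at $(a,b)$ in component $m$ and its negative for a removal; comparing this, restricted to components $1,\dots,k$ inside $B_{2k,r}(\mathbf u_{2k})$, with the content function of $B_{k,r}(\mathbf v)$ pins the parameter down to $\mathbf v=(-1)^k\mathbf u_k$, the sign coming from the reflection attached to removal steps once one insists that $eAe$ be identified with a genuine cyclotomic Brauer algebra rather than a twist of one. The remaining compatibility — and the genuinely hard point — concerns cell modules containing caps: their bilinear forms involve the admissible parameters $\omega_a$ through the relations $e_1x_1^ae_1=\omega_ae_1$, and one must prove that the admissible parameters of $\mathbf u_{2k}$, seen through $e$, agree with those of $(-1)^k\mathbf u_k$. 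In generating-function form this says that the rational function encoding $\Omega$-admissibility for $\mathbf u_{2k}$ sheds its factors coming from the disjoint tail $u_{k+1},\dots,u_{2k}$ on passing to the block $\mathcal B_0$ and reduces to the one for $(-1)^k\mathbf u_k$; this is precisely where the doubling from $k$ to $2k$ and the twist $(-1)^k$ are forced, and it is the crux of the proof.

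\emph{Conclusion.} Once the cell data are matched in this way, the two cellular algebras are isomorphic, so $B_{k,r}((-1)^k\mathbf u_k)\cong eAe=eB_{2k,r}(\mathbf u_{2k})e$. Equivalently, one can record the identification as an explicit homomorphism from $B_{k,r}((-1)^k\mathbf u_k)$ into $eAe$ which is bijective by the dimension count above; some care is needed here, since the naive truncations $e\,s_i\,e$ of the generators need not satisfy the braid-type relations on their own, so one works with corrected preimages or, as above, with the seminormal description. In either route the substantive work is the admissibility identity of the previous paragraph.
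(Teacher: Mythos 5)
Your location of the idempotent (projection, inside the commutative Jucys--Murphy subalgebra, onto the generalized eigenspaces whose eigenvalues stay in the block generated by $u_1,\dots,u_k$) and your dimension count via up--down tableaux both agree with the paper, which takes $e=\gamma_1'\cdots\gamma_r'$ and uses the Jucys--Murphy basis of \cite{RSi} to get $\dim eAe=\sum_\mu |\mathscr T^{up}_{k,r}(\mu)|^2$. But the proposal stops exactly where the proof has to start. You write that ``once the cell data are matched in this way, the two cellular algebras are isomorphic'': this inference is not valid outside the semisimple case. Two cellular algebras with order-isomorphic posets, equidimensional cell modules, and even matching bilinear-form ranks need not be isomorphic, and for arbitrary $\mathbf u_k$ the algebras here are very far from semisimple (that is the whole point of computing decomposition numbers). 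A seminormal form, which you invoke as the alternative, is likewise unavailable in general. So the isomorphism must be exhibited, not deduced from numerology.

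That is what the paper does, and it is also the step you explicitly defer: it constructs corrected generators $\tilde s_i=-Q_is_iQ_i+b_i^{-1}\mathbf f$, $\tilde e_i=Q_ie_iQ_i$, $\tilde x_j=-x_j\mathbf f$ with $Q_i=\sqrt{b_{i+1}/b_i}\,\mathbf f$ and $b_i=x_i+u_{k+1}$, and verifies all defining relations of the lower-level algebra. The hardest of these is precisely your ``crux,'' the identity $\tilde e_1\tilde x_1^a\tilde e_1=\omega_a\tilde e_1$: the paper proves it by a generating-function computation (Lemma~\ref{lem:rela5}), expanding $\tilde e_1\sum_a(-x_1)^au^{-a}\tilde e_1$ via the partial-fraction manipulation of $\frac{u_{k+1}-x_1}{u_{k+1}+x_1}\cdot\frac{u}{u+x_1}$ and the evaluation $e_1 b_1^{-1}e_1\mathbf f=(1+\frac{1}{2u_{k+1}})e_1\mathbf f$ coming from the factor $u+u_{k+1}$ of $W_{k+1,1}(u)+u-\frac12$ in the admissibility condition \eqref{uadm}; the outcome is exactly $\widehat W_1(u)$, the admissible series for the sign-flipped parameters. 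You name this identity but do not prove it, and without it neither the well-definedness of a homomorphism nor the parameter $(-1)^k\mathbf u_k$ (as opposed to some other twist) is established. Note also that the paper does not cut from $2k$ to $k$ in one step: it proves the one-level statement $B_{k,r}(-\mathbf u_k)\cong\mathbf f B_{k+1,r}(\mathbf u_{k+1})\mathbf f$ and iterates $k$ times, which is where the sign $(-1)^k$ actually comes from (one factor of $-1$ per level, via $\tilde x_j=-x_j\mathbf f$). In short: the framing is right, but the proposal omits the entire substantive content of the proof and replaces it with an invalid ``matching cell data implies isomorphism'' step.
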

The key difference in the proof of Theorem \ref{Thm:subisok2k1} between the case $k=1$ in \cite{ES} and the cases $k>1$ in this paper lies in the proofs of  Proposition \ref{genefatingfu}(1) and Lemma \ref{lem:rela5}.  The approach used  for $k=1$ does not seem to  extend naturally to higher levels. Handling the  higher-level cases requires a significantly  different approach, relying on  the generating function $W_{k,1}(u)$ in \eqref{w1} and the $u$-admissible condition in \eqref{uadm}. 

 The second step toward our goal is to  demonstrate  that the algebra  $B_{2k,r}(\mathbf{u}_{2k})$  is  isomorphic to the endomorphism algebra of an appropriate  tilting module in the parabolic category  $\mathcal{O}$ of type  $D_n$,  with a parabolic subalgebra of type  $A$. When $k=1$, this result was established  by Ehrig and Stroppel in \cite{ES, ES2},  where they used projective modules instead of tilting modules. 

 To explain our approach, let 
  $\mathfrak{p} \subset \mathfrak{so}_{2n} $ denote the parabolic subalgebra  corresponding to the subset \begin{equation}\label{lever}I= \Pi\setminus\{\alpha_{p_1}, \alpha_{p_2}, \ldots, \alpha_{p_k}\},\end{equation}  where
 $0=p_0<p_1<p_2<\ldots< p_{k-1}<p_k=n$, and 
  $\Pi$ is  the set of simple roots $\alpha_i$, for all $1\le i\le n$, as defined in  \eqref{aln} for $\mathfrak{so}_{2n}$. 
  We emphasize that $I$ corresponds to  $I_1$ for the orthogonal Lie algebra $\mathfrak{so}_{2n}$ in \cite{RS-cyc, GRX}.
  Define  \begin{equation}
  \label{pdom}
  \Lambda^\frp=\{\lambda\in \mathfrak h^*\mid \langle \lambda, \alpha^\vee\rangle \in \mathbb N\ \  \text{for all $\alpha\in I$}\},\end{equation}
where $\mathfrak h^\ast $ is dual space of the standard Cartan subalgebra $\mathfrak h$ of $\mathfrak {so}_{2n}$, and $\alpha^\vee$ is the coroot of $\alpha$. 

Throughout, $V$ denotes  the natural $\mathfrak{so}_{2n}$-module. For any $\mathbf c=(c_1, c_2, \ldots, c_k)\in \mathbb C^k$, and a non-negative integer $r$,  let 
\begin{equation}\label{mir} M_{r,\mathbf c}:=M^\frp(\lambda_{\mathbf c})\otimes V^{\otimes r},\end{equation} where  $M^\frp(\lambda_{\mathbf c})$ is  the parabolic Verma module with the  highest weight 
\begin{equation}\label{deltac}\lambda_{\mathbf c}=\sum_{j=1}^{k} c_j(\epsilon_{p_{j-1}+1}+\epsilon_{p_{j-1}+2}+\ldots+\epsilon_{p_j})\in\Lambda^{\frp}.\end{equation}   
Here $\{\epsilon_1, \epsilon_2, \ldots, \epsilon_n\}$ is a basis of $\mathfrak h^\ast$, as defined in Section~3.

The second step toward our goal involves the identification of a specific simple parabolic Verma
module associated with  any given parameters $\mathbf u_k$ of the cyclotomic Brauer algebra $B_{k,r}((-1)^k\mathbf u_k)$ as follows.
\begin{THEOREM}
\label{Thm:simpleVerma2}
    For any given $\mathbf u_k\in \mathbb C^k$, define     $\mathbf c=(c_1, c_2, \ldots, c_k)\in  \mathbb C^k$, where  \begin{equation}\label{cjuj}   c_j=  u_j+p_{j-1}-n+\frac{1}{2} ,\end{equation}   for $ 1\le j\le k$ and any  $p_0, p_1, \ldots, p_k\in \mathbb C$. Then,  there exist  positive integers $0=p_0<p_1<p_2<\ldots< p_k=n$ such that 
$p_t-p_{t-1}\gg 0$, for $1\le t\le k$,  and 
  $M^\mathfrak p(\lambda_{\mathbf c})$ is simple and tilting.
\end{THEOREM}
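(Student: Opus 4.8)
The plan is to choose the gaps $p_t - p_{t-1}$ large enough that the weight $\lambda_{\mathbf c}$ becomes dominant, regular, and in fact lies in a chamber where the parabolic Verma module $M^{\mathfrak p}(\lambda_{\mathbf c})$ has no possible higher composition factors, which by standard BGG-type arguments forces it to be simple, and then to leverage the simplicity together with the known self-duality of Verma modules for $\mathfrak p$-dominant weights to conclude it is tilting. Concretely, first I would write out $\lambda_{\mathbf c} + \rho$ in the $\epsilon$-basis and observe that on the $j$-th block (the coordinates $\epsilon_{p_{j-1}+1}, \ldots, \epsilon_{p_j}$) the entries form a decreasing arithmetic progression with common difference $-1$ shifted by $c_j$, while the ``$\rho$-shift'' between consecutive blocks is governed by $p_{j-1}$; the precise form \eqref{cjuj} of $c_j$ is exactly what makes the within-block entries compatible with $I$-dominance (so $\lambda_{\mathbf c} \in \Lambda^{\mathfrak p}$). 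The freedom in Theorem \ref{Thm:simpleVerma2} is to enlarge $n$ and the $p_t$'s; I would use this to push the distinct blocks' value-sets far apart from each other, both additively and in the ``$\pm$'' sense of the type $D$ root system (so that no $\epsilon_i \pm \epsilon_j$ with $i,j$ in different blocks is an integer root direction linking $\lambda_{\mathbf c}$ to a lower weight).

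Next I would invoke the linkage principle: the composition factors of $M^{\mathfrak p}(\lambda_{\mathbf c})$ have highest weights of the form $w \cdot \lambda_{\mathbf c}$ with $w$ in the integral Weyl group $W_{[\lambda_{\mathbf c}]}$ and $w\cdot\lambda_{\mathbf c} \leq \lambda_{\mathbf c}$, each lying in $\Lambda^{\mathfrak p}$. Making the inter-block gaps enormous decouples the integral root system into $k$ pieces of type $A$, one per block, so $W_{[\lambda_{\mathbf c}]}$ is (a subgroup of) a product of symmetric groups acting within blocks; but within each block $\lambda_{\mathbf c} + \rho$ is already strictly decreasing, so its $\mathfrak p$-block stabilizer is the whole local symmetric group and the only $\mathfrak p$-dominant weight in the local dot-orbit that is $\leq \lambda_{\mathbf c}$ is $\lambda_{\mathbf c}$ itself. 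Hence $M^{\mathfrak p}(\lambda_{\mathbf c})$ has a unique composition factor, i.e. it is simple. For the tilting claim, recall that $M^{\mathfrak p}(\lambda)$ always has a parabolic Verma (standard) flag by definition; a simple parabolic Verma module is its own simple head and socle, and for $\mathfrak p$-dominant $\lambda$ the module $M^{\mathfrak p}(\lambda)$ is known to be fixed by the duality $\vee$ on parabolic category $\mathcal O$ when it is simple (simple modules are self-dual, and here simple $=$ standard), so $M^{\mathfrak p}(\lambda_{\mathbf c})$ also has a costandard (dual parabolic Verma) flag; having both flags, it is tilting.

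The main obstacle I anticipate is bookkeeping the $\rho$-shift and the type $D$ subtlety rather than the representation-theoretic mechanism: I must verify that the stated $c_j = u_j + p_{j-1} - n + \tfrac12$ really does land $\lambda_{\mathbf c}$ in $\Lambda^{\mathfrak p}$ for all admissible parameters (the $-n+\tfrac12$ and the $p_{j-1}$ are there precisely to synchronise the arithmetic progressions across the blocks and to absorb the half-integer shift inherent to $\mathfrak{so}_{2n}$), and that ``$p_t - p_{t-1} \gg 0$'' can be chosen uniformly so that simultaneously (i) each within-block sequence is strictly decreasing, (ii) no cross-block integral $\pm$-linkage exists, and (iii) the two special nodes $\alpha_{n-1}, \alpha_n$ of type $D$ do not create a spurious linkage at the ``tail'' block $p_{k-1}+1, \ldots, n$. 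Each of these is a finite system of strict inequalities in the gaps, hence satisfiable for the gaps large, but the type $D$ fork requires a little care: I would handle it by noting that enlarging $n$ while keeping the $u_j$ fixed sends the tail-block entries of $\lambda_{\mathbf c}+\rho$ uniformly to $+\infty$, away from the ``mirror'' $0$ that the node $\alpha_n$ sees, so for $n \gg 0$ the local integral root system at the tail is genuinely of type $A$ and the previous argument applies verbatim.
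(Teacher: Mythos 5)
Your proposal contains a genuine gap at its central step. You claim that by taking the gaps $p_t-p_{t-1}$ large you can ``decouple'' the integral root system into $k$ type-$A$ pieces, one per block, so that no $\epsilon_i\pm\epsilon_j$ with $i,j$ in different blocks (and no $\epsilon_i+\epsilon_j$ within a block) is integral on $\lambda_{\mathbf c}+\rho$. This is false: whether $\langle\lambda_{\mathbf c}+\rho,(\epsilon_i\pm\epsilon_j)^\vee\rangle$ is an integer is governed solely by whether $u_s\pm u_t\in\Z$ (resp.\ $2u_s\in\Z$ within a block), since the entries of the $s$-th block of $\lambda_{\mathbf c}+\rho$ are $u_s-l'+\tfrac12$; this is a property of the \emph{given, arbitrary} parameters $\mathbf u_k$ and cannot be changed by any choice of the $p_t$. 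For instance, if all $u_j$ are integers, then $\Psi^+_{\lambda_{\mathbf c}}\neq\emptyset$ for every choice of gaps, so your condition (ii) is simply not satisfiable and the ``finite system of strict inequalities'' fallback does not rescue it. Moreover, even where linkage does occur, your argument only inspects single reflections $s_\beta$, whereas the linkage principle constrains composition factors to the full integral dot-orbit; ruling out $\frp$-dominant weights $w\cdot\lambda_{\mathbf c}<\lambda_{\mathbf c}$ for arbitrary $w$ is exactly the hard part. (A smaller but telling slip: the tail-block entries of $\lambda_{\mathbf c}+\rho$ are $u_k-l'+\tfrac12$, which tend to $-\infty$, not $+\infty$, as $n$ grows.)

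The paper's proof confronts the unavoidable nonemptiness of $\Psi^+_{\lambda_{\mathbf c}}$ head-on via the Xiao--Zhang refinement of Jantzen's simplicity criterion (Proposition~\ref{Pro:simcri}): it suffices that $\Psi^{++}_{\lambda_{\mathbf c}}=\emptyset$, i.e., that for every $\beta\in\Psi^+_{\lambda_{\mathbf c}}$ the reflected weight $s_\beta(\lambda_{\mathbf c}+\rho)$ fails to be $\Phi_I$-regular. Lemmas~\ref{posi11} and~\ref{pos12} then show, by choosing the gaps $q_t$ to satisfy explicit inequalities such as \eqref{Equ:conqtqs} and \eqref{Equ:case2q1}--\eqref{Equ:case3q12}, that $s_\beta(\lambda_{\mathbf c}+\rho)$ always acquires two equal coordinates within some block (e.g.\ by arranging $(\mathbf d_s)\subset(\mathbf d_t)$). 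This is the idea your proposal is missing; without it, or some equivalent mechanism for handling integral cross-block and within-block roots of the form $\epsilon_i+\epsilon_j$, the simplicity claim does not follow. Your treatment of the tilting statement (simple implies self-dual implies tilting, as in \cite[\S11.8]{Hum}) is fine once simplicity is established.
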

\begin{Assum}\label{1123} The symbols   $\mathbf c$ and $p_i$, $0\le i\le  k$,  are   defined as    in Theorem~\ref{Thm:simpleVerma2}, for any  given  $\mathbf u_k\in\mathbb C^k$,  
\end{Assum}
From this point onward,  we consistently maintain Assumption~\ref{1123}. Consequently, the parabolic Verma module  $M^\mathfrak p(\lambda_{\mathbf c})$ is simple and tilting.

Next, we define   the parameters  $u_{k+1}, u_{k+2}, \ldots, u_{2k}$ as follows: 
\begin{equation} \label{uij21}   u_j  =  -c_{2k-j+1}+p_{2k-j+1}- n+\frac{1}{2},   \text{ for $k+1\leq j\leq 2k$.} \end{equation} 
We will demonstrate  that $u_{j+1}$ is disjoint from  $\mathbf u_j$ for $j=k,k+1,\ldots,2k-1$ , under Assumption \ref{1123}. 

By combining Theorem \ref{Thm:subisok2k1},
Theorem~\ref{Thm:simpleVerma2},  and \cite[Theorem 5.4]{RS-cyc}, we establish the following theorem. It   reveals   a fundamental connection between the cyclotomic Brauer algebra with arbitrary parameters  and the parabolic category $\mathcal O^\frp $
in type $D_n$, where the parabolic  subalgebra $\frp$ is  determined by  $I$ in \eqref{lever}. 

\begin{THEOREM}
\label{The-RSmain} Let  $\mathbf u_{2k}=(u_1, u_2, \ldots, u_{2k})$, where $u_{1}, u_{2}, \ldots, u_{2k}$ are specified   in \eqref{cjuj}--\eqref{uij21}. 
Then: 
\begin{enumerate}
    \item [(1)]$B_{2k,r}^{op}(\mathbf u_{2k})\cong \End_{\mathcal O^\frp }(M_{r,\mathbf c})$ as $\mathbb C$-algebras.  
    \item[(2)]   $B_{k,r}^{op}((-1)^k\mathbf u_k)\cong e\End_{\mathcal O^\frp }(M_{r,\mathbf c})e$ as $\mathbb C$-algebras, for some  idempotent $e$ of $\End_{\mathcal O^\frp }(M_{r,\mathbf c})$, where $(-1)^k \mathbf u_k$ is defined as in \eqref{minusu}. 
\end{enumerate}
 
\end{THEOREM}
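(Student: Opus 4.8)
The plan is to deduce both parts from results already in hand: part~(1) from Theorem~\ref{Thm:simpleVerma2} together with \cite[Theorem~5.4]{RS-cyc}, and part~(2) from part~(1), Theorem~\ref{Thm:subisok2k1}, and the disjointness of $u_{j+1}$ from $\mathbf u_j$ for $k\le j\le 2k-1$. Essentially all of the representation-theoretic content sits in those cited results; what remains is to line up parameters, the doubling of the level from $k$ to $2k$, the sign $(-1)^k$, and the opposite-algebra conventions.

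For part~(1), I would first invoke Theorem~\ref{Thm:simpleVerma2} (legitimate by Assumption~\ref{1123}) to fix positive integers $0=p_0<p_1<\cdots<p_k=n$ with $p_t-p_{t-1}\gg 0$ for all $t$, for which $M^\frp(\lambda_{\mathbf c})$ is simple and tilting. With this choice the hypotheses of \cite[Theorem~5.4]{RS-cyc} are in force, and that result provides an isomorphism $B_{2k,r}^{op}(\mathbf u)\cong\End_{\mathcal O^\frp}\bigl(M^\frp(\lambda_{\mathbf c})\otimes V^{\otimes r}\bigr)=\End_{\mathcal O^\frp}(M_{r,\mathbf c})$, in which the level $2k$ is twice the number of simple roots removed in $I$ of~\eqref{lever} and the parameters $\mathbf u$ are read off from $\lambda_{\mathbf c}$ and the type-$D_n$ datum. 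It then remains to check that $\mathbf u=\mathbf u_{2k}$ of~\eqref{cjuj}--\eqref{uij21}: the first $k$ parameters recover $\lambda_{\mathbf c}$ itself via $c_j=u_j+p_{j-1}-n+\tfrac12$ as in~\eqref{cjuj}, while the remaining $k$ arise from the extra ``$-\epsilon$'' data intrinsic to type $D_n$, via $u_j=-c_{2k-j+1}+p_{2k-j+1}-n+\tfrac12$ as in~\eqref{uij21}. Comparing these against the normalization of~\cite{RS-cyc} yields~(1).

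For part~(2), I would first verify, under Assumption~\ref{1123}, that $u_{j+1}$ is disjoint from $\mathbf u_j$ for every $j=k,k+1,\dots,2k-1$. Substituting~\eqref{cjuj} and~\eqref{uij21}, each $u_{j+1}\pm u_i$ with $1\le i\le j$ is a fixed complex number plus an integral combination of the gaps $p_t-p_{t-1}$, and since Theorem~\ref{Thm:simpleVerma2} allows these gaps to be taken arbitrarily large, $u_{j+1}\pm u_i$ is forced to be either a non-integer or an integer of absolute value $\ge r$, i.e.\ the $r$-disjointness of~\cite{ES}. Hence the hypothesis of Theorem~\ref{Thm:subisok2k1} is met, so there is an idempotent $e'\in B_{2k,r}(\mathbf u_{2k})$ with $B_{k,r}((-1)^k\mathbf u_k)\cong e'B_{2k,r}(\mathbf u_{2k})e'$. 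Passing to opposite algebras---an idempotent of $A$ is an idempotent of $A^{op}$, and $(e'Ae')^{op}=e'A^{op}e'$---gives
\[
B_{k,r}^{op}((-1)^k\mathbf u_k)\cong e'\,B_{2k,r}^{op}(\mathbf u_{2k})\,e'.
\]
Finally, letting $\Phi\colon B_{2k,r}^{op}(\mathbf u_{2k})\xrightarrow{\sim}\End_{\mathcal O^\frp}(M_{r,\mathbf c})$ be the isomorphism from part~(1) and setting $e=\Phi(e')$, an idempotent of $\End_{\mathcal O^\frp}(M_{r,\mathbf c})$, the map $\Phi$ restricts to an isomorphism $e'B_{2k,r}^{op}(\mathbf u_{2k})e'\xrightarrow{\sim}e\End_{\mathcal O^\frp}(M_{r,\mathbf c})e$, which together with the displayed isomorphism proves~(2).

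The main obstacle will be purely one of conventions and bookkeeping: ensuring that the parameter dictionary, the $k\mapsto 2k$ level doubling, the sign $(-1)^k$ appearing in $(-1)^k\mathbf u_k$, and the ``$op$'' all agree between \cite[Theorem~5.4]{RS-cyc}, Theorem~\ref{Thm:subisok2k1}, and the present statement, and that a single choice of $p_1,\dots,p_{k-1}$ simultaneously makes $M^\frp(\lambda_{\mathbf c})$ simple and tilting and makes all $k$ disjointness conditions hold. Beyond invoking the cited results, no new argument is needed.
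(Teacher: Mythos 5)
Your proposal is correct and follows essentially the same route as the paper: part (1) is Theorem~\ref{Thm:simpleVerma2} combined with \cite[Theorem~5.4]{RS-cyc}, and part (2) is the disjointness verification (the paper's Lemma~\ref{disj}) feeding into Theorem~\ref{Thm:subisok2k1}, then transported through the isomorphism of part (1). The only caveat is that for pairs $u_{j+1}\pm u_i$ with both indices exceeding $k$ the relevant quantity involves a \emph{difference} of gaps $q_l-q_t$, so ``all gaps large'' does not suffice by itself and one must make the compatible successive choices the paper records in \eqref{equ:chooseofqtqs>1}--\eqref{Equ:case3q12}; you correctly flag this as a bookkeeping point to be checked.
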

 Thanks to Theorem~\ref{The-RSmain},  the decomposition numbers of the cyclotomic Brauer algebra $B_{2k,r}(\mathbf{u}_{2k})$ can, in principle,  be computed using parabolic Kazhdan–Lusztig polynomials of type $D_n$ with a parabolic subgroup of type $A$. This result   arises naturally  from  the general framework developed in \cite{AST} and \cite[Corollary~5.10]{RS-cyc}. 
 
 To explain these results explicitly, let  $(M_{r, \mathbf c}: M^\mathfrak p(\mu))$ denote   the multiplicity of the  parabolic Verma module $ M^\mathfrak p(\mu)$ 
in any parabolic Verma flag of $M_{r, \mathbf c}$.
Define \begin{equation}\label{frbar}\begin{aligned}   \mathbf F_r & =\{ \mu \in \Lambda^\mathfrak p\mid (M_{r, \mathbf c}: M^\mathfrak p(\mu))\neq 0 \},\\
\bar {\mathbf F}_r & = \{ \lambda\in \mathbf F_r\mid T^\frp(\lambda) \text{ is a direct summand of $ M_{r,\mathbf c}$}\},\\
\end{aligned}
\end{equation} 
where $ T^\frp(\lambda)$ is the indecomposable tilting module  in $\mathcal O^\frp$ 
with the   highest weight $\lambda$. 
Then, \begin{equation}\label{dec-til} M_{r, \mathbf c}=\bigoplus_{\lambda\in \bar {\mathbf F}_{r}} T^\frp(\lambda)^{\oplus n_\lambda}.\end{equation}

From \cite{AST},   there exists a cellular basis of $\End_{\mathcal O^\frp}(M_{r,\mathbf c})$ with respect to the poset $( {\mathbf F}_{r}, \leq)$, where $\leq$ denotes the dominance order  on $\mathfrak h^*$ defined by $\lambda\le \mu$ if $\mu-\lambda\in \mathbb N\Pi$. 
The corresponding left cell modules are $\{S(\lambda)\mid \lambda\in {\mathbf F}_{r}\}$, where 
$$S(\lambda)=\Hom_{\mathcal O^\frp}(M^\frp(\lambda), M_{r, \mathbf c}).$$
By \cite{GL},   there is an invariant form $\phi_\lambda$ on $S(\lambda)$ such that 
$$D(\lambda)=S(\lambda)/\Rad\phi_\lambda,$$
where $\Rad \phi_\lambda$ denotes  the radical of $\phi_\lambda$. Furthermore,  \cite{AST} establishes  that $D(\lambda)\neq 0$ if and only if $\lambda\in \bar {\mathbf F}_{r}$. The indecomposable projective modules are 
$\{P(\lambda)\mid \lambda\in \bar {\mathbf F}_{r}\}$, where 
$$P(\lambda)=\Hom_{\mathcal O^\frp}(T^\frp(\lambda), M_{r, \mathbf c}).$$
Each $P(\lambda)$ serves as the projective cover of the simple module  $D(\lambda)$. From Theorem~\ref{The-RSmain}, $S(\lambda), P(\lambda)$ and $D(\lambda)$ can be viewed as right $B_{2k, r}(\mathbf u_{2k})$-modules, where $B_{2k, r}(\mathbf u_{2k})$ is the cyclotomic Brauer algebra  described in Theorem~\ref{The-RSmain}. Thanks to  Theorem~\ref{The-RSmain} and \cite[Corollary 5.10]{RS-cyc},
we have   
      \begin{equation}
      \label{equ:decom1}
      [S(\lambda)e: D(\mu)e]=[S(\lambda): D(\mu)] =(T^{\mathfrak p}( \mu):M^{\mathfrak p}( \lambda)),    
      \end{equation}
for all $(\lambda, \mu)\in  {\mathbf F}_{r,k}\times \bar {\mathbf F}_{r,k}$,
where $[S(\lambda): D(\mu)]$, referred  to as   the decomposition number,  represents  the multiplicity of the simple module $D(\mu)$ in a composition series of $S(\lambda)$. The sets 
 ${\mathbf F}_{r,k}$ and $ \bar {\mathbf F}_{r,k}$ are defined as: 

\begin{equation}\label{barfk} \begin{aligned} {\mathbf F}_{r,k} & =\{\lambda \in {\mathbf F}_{r}\mid S(\lambda)e\neq 0   \}, \\ \bar {\mathbf F}_{r,k}& =\{\lambda\in {\mathbf F}_{r,k}\mid D(\lambda)e\neq 0\}. \end{aligned} \end{equation} 
Explicit characterizations  of $ {\mathbf F}_{r,k}$ and  $\bar {\mathbf F}_{r,k}$ are provided  in Lemma~\ref{frk} and Theorem~D, respectively. 

By  $\eqref{equ:decom1}$,  
 the decomposition numbers of $B_{k,r}((-1)^k\mathbf u_k)$ are understood only theoretically. This limitation arises  because  
 while ${\mathbf F}_{r}$ has an explicit characterization, critical information remains unavailable for ${\mathbf F}_{r,k}$,   $\bar {\mathbf F}_{r}$, $\bar {\mathbf F}_{r,k}$,  and the multiplicities $n_\lambda$ in \eqref{dec-til}.   
  Furthermore,  no direct relationship has been established between the right cell modules $S(\lambda)$ and those defined using the explicit weakly cellular basis of $B_{2k, r}(\mathbf u_{2k}) $ in \cite[Theorem~2.4]{GRX}, as the latter framework depends inherently on combinatorial definitions. 
  
   Following \cite[(1.11)]{GRX},  
we consider the  partial ordering $\prec$ on $\Lambda^\frp$, where      $\mu\prec \lambda$ indicates  the existence of  a sequence $$\mu=\gamma^0, \gamma^1, \ldots, \gamma^j=\lambda$$ in $\Lambda^\frp$  such  that, for each $1\le l \le j$,   the simple $\mathfrak {so}_{2n}$-module $L(\gamma^{l-1})$,  with the highest weight $\gamma^{l-1}$,   appears as a composition factor of $M^{\frp} (\gamma^l)$. 

The following assumption  is referred to as the  \textsf{saturated condition}, as  stated in 
\cite[(1.12)]{GRX}.
\begin{Cond}
  \label{keyconj} For any $0\le j\le r$,  $ {\mathbf F_{j}}$  is saturated with respect  to  $\prec$  in the sense that $\mu\in {\mathbf F_{j}}$ if  $\mu\prec \lambda$ for some $\lambda\in {\mathbf F_{j}}$.   
\end{Cond}

Condition~\ref{keyconj} is well-justified since, by 
 \cite[Theorem~E]{GRX}, it  holds if  
 $\lambda_{\mathbf c}$ satisfies \begin{equation}\label{keyass1} \langle \lambda_{\mathbf c}+\rho, \beta^\vee\rangle\not \in \mathbb Z_{>0}, \ \ \text{ for all $\beta\in \Phi^+\setminus \Phi_I$.}\end{equation} 
Here $\Phi^+$ is the set of positive roots associated with $\mathfrak{so}_{2n}$, and $\Phi_I=\Phi\cap \mathbb ZI$, and $\rho$ is the half sum of all positive roots in $\Phi^+$.

Let $\Lambda_{2k, r}$ be the set of pairs $(f, \lambda)$, where $\lambda=(\lambda^{(1)}, \lambda^{(2)}, \ldots, \lambda^{(2k)})$
is a $2k$-partitions of $r-2f$, and $0\le f\le \lfloor r/2\rfloor$. For each $(f, \lambda)\in \Lambda_{2k, r}$, let  $C(f, \lambda)$ denote  the right  cell module of $B_{2k, r}(\mathbf u_{2k})$,  defined using the weakly cellular basis in \cite[Theorem~2.4]{GRX}. According to \cite{GL},   there exists  an invariant form $\phi_{f, \lambda}$ on $C(f, \lambda)$. Define  $$D(f, \lambda)= C(f, \lambda)/\Rad \phi_{f, \lambda}, $$
where $\Rad \phi_{f, \lambda}$ denotes the radical of $\phi_{f, \lambda}$.
Then all non-zero $D(f, \lambda)$ form a complete set of  pairwise non-isomorphic simple  $B_{2k,r} (\mathbf u_{2k})$-modules. 

From \cite[Theorem~2.5]{GRX}, we know that  $D(f, \lambda)\neq 0$ if and only if $\lambda\in \bar\Lambda_{2k, r}$, where   $$\bar\Lambda_{2k, r}=\{(f, \lambda)\in \Lambda_{2k, r}\mid \sigma^{-1}(\lambda)  \text{ is $\mathbf {u}_{2k}$-restricted }\}. $$
Here $\sigma$ denotes the generalized Mullineaux involution as defined in \cite[Remark~5.10]{RS1}. This result follows from  the classification of simple modules for $B_{2k,r}(\mathbf u_{2k})$ \cite{RSi}, since  $\omega_0=2n\neq0$, where 
$\{\omega_i\mid i\in\N\}$ is a set of  parameters associated with  $B_{2k, r}(\mathbf u_{2k})$, and  determined by $\mathbf u_k$ (see \eqref{uadm}). Notably, $\mathbf u_{2k}$-restricted partitions are also known as   Kleshchev multipatitions (see e.g.,  \cite[(2.11)--(2.12)]{GRX} for an explicit description).

By \cite[(4.4)]{GRX}, there is a bijection $\hat \  :\    \Lambda_{2k,r}\rightarrow {\mathbf F}_{r}$. From this point onward, we use the symbol  $\lambda$ to denote either a weight in $\Lambda^\frp$ or a multipartition, with the meaning to be understood from the context.
From  \cite[Theorem~D]{GRX},   
\begin{equation}\label{isocell1}S(\hat \lambda)\cong C(f,  \lambda'),\  \text{ and }\  
\bar {\mathbf F}_{r}= \{\hat \lambda\in {\mathbf F}_{r}\mid (f,  \lambda')\in \bar \Lambda_{2k,r}\},\end{equation}
if Condition  \ref{keyconj} holds.

The following is the main result about the decomposition numbers of $B_{k,r}((-1)^k\mathbf u_k) $ under the additional assumption  $\omega_i\neq 0$ for some $0\le i\le k-1$ when $r$ is even. 
 \begin{THEOREM}\label{first123}
 Fix any $\mathbf u_k\in \C^k$.
 Suppose Condition~\ref{keyconj} holds.  Then:
 \begin{enumerate}
 \item[(1)] $S(\hat \lambda)e\neq 0$ if and only if $\hat \lambda\in {\mathbf F}_{r,k}$, where $${\mathbf F}_{r,k}=\{\mu \in {\mathbf F}_{r}\mid \delta^\mu_i\ge 0 \text{ for all } 1\le i \le n  \},$$ and $\delta^\mu$ is given in \eqref{hatmu}. Moreover, $S(\hat \lambda)e\cong C(f,\lambda')e$ and $\{S(\hat \lambda)e\mid \hat \lambda \in {\mathbf F}_{r,k}\}$  consist of all cell modules of $B_{k, r}((-1)^k \mathbf u_k)$.
 \item[(2)]  
 $D(\hat \lambda)e\neq 0$ if and only if $(f, \lambda') \in \bar\Lambda_{2k,r}$, under the assumption that $\omega_i\neq0$ for some $0\le i\le k-1$ when $r$ is even.   Consequently, 
 $ \bar {\mathbf F}_{r,k}={\mathbf F}_{r,k}\cap \bar {\mathbf F}_{r}$.
\item[(3)] $\{D(\hat \lambda)e\mid \hat \lambda\in \bar {\mathbf F}_{r,k}\}$ forms   a complete set of pair-wise non-isomorphic simple $B_{k, r}((-1)^k \mathbf u_k)$-modules, under the assumption that  $\omega_i\neq0$ for some $0\le i\le k-1$ when $r$ is even.  
\item [(4)] 
 $ [S(\hat \lambda)e: D(\hat \mu)e]= ( T^\frp(\hat \mu):M^\frp(\hat \lambda))$, for all $\hat \lambda \in  {\mathbf F}_{r,k} $, and $\hat\mu\in  \bar {\mathbf F}_{r,k}$,  under the assumption  that $\omega_i\neq0$ for some $0\le i\le k-1$ when $r$ is even.  
 \end{enumerate}    \end{THEOREM}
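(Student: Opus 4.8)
The plan is to derive Theorem~\ref{first123} by combining the idempotent truncation in Theorem~\ref{The-RSmain}(2) with the cellular structure of $\End_{\mathcal O^\frp}(M_{r,\mathbf c})$ coming from \cite{AST,GRX}, and then translating everything back through the bijection $\hat{\ }$ of \cite[(4.4)]{GRX}. The central tool is the general principle that idempotent truncation of a cellular algebra is cellular, with cell modules $C(\lambda)e$ and simple modules exactly those $D(\lambda)e$ that survive; here $A=\End_{\mathcal O^\frp}(M_{r,\mathbf c})\cong B_{2k,r}^{op}(\mathbf u_{2k})$, $e$ is the idempotent of Theorem~\ref{The-RSmain}(2), and $eAe\cong B_{k,r}^{op}((-1)^k\mathbf u_k)$. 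So the only genuine content of each part is identifying \emph{which} cell/simple modules are killed by $e$, i.e. computing the sets ${\mathbf F}_{r,k}$ and $\bar{\mathbf F}_{r,k}$ explicitly.

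For part~(1): starting from \eqref{barfk}, $S(\hat\lambda)e\neq 0$ is by definition $\hat\lambda\in{\mathbf F}_{r,k}$, so the real task is the equality ${\mathbf F}_{r,k}=\{\mu\in{\mathbf F}_r\mid \delta^\mu_i\geq 0\text{ for all }1\le i\le n\}$. I would trace through the construction of $e$ inside the proof of Theorem~\ref{Thm:subisok2k1}: the idempotent picks out, among the $2k$-partition combinatorics labelling ${\mathbf F}_r$ (via $\hat{\ }$ and the cell modules $C(f,\lambda')$ of \cite[Theorem~2.4]{GRX}), exactly those weights whose ``second half'' of coordinates is the mirror image (under $u_j\mapsto -c_{2k-j+1}+\cdots$ from \eqref{uij21}) of the ``first half''. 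Translating this mirror-symmetry condition into the weight-coordinate language of $\Lambda^\frp$ produces precisely the non-negativity of the components $\delta^\mu_i$ of $\delta^\mu$ from \eqref{hatmu}. The isomorphism $S(\hat\lambda)e\cong C(f,\lambda')e$ is then immediate from \eqref{isocell1} and functoriality of $-\otimes_A eA$ (equivalently, $\Hom_A(eA,-)$), and the last clause — that these exhaust the cell modules of $B_{k,r}((-1)^k\mathbf u_k)$ — follows because the cell datum of $eAe$ is indexed exactly by $\{\lambda\in{\mathbf F}_r\mid S(\lambda)e\neq 0\}$.

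For parts~(2) and~(3): the statement $D(\hat\lambda)e\neq 0\iff D(\hat\lambda)\neq 0$ and $\hat\lambda\in{\mathbf F}_{r,k}$ is again the general cellular-truncation fact — $D(\lambda)e$ is either $0$ or the simple head of $C(\lambda)e$, and it is nonzero precisely when the truncated invariant form $\phi_{f,\lambda'}|_{C(f,\lambda')e}$ is nonzero. Combined with \cite[Theorem~2.5]{GRX} this gives $D(\hat\lambda)e\neq 0\iff (f,\lambda')\in\bar\Lambda_{2k,r}$ and $\hat\lambda\in{\mathbf F}_{r,k}$, hence $\bar{\mathbf F}_{r,k}={\mathbf F}_{r,k}\cap\bar{\mathbf F}_r$ using the second identity in \eqref{isocell1}; part~(3) then follows from the standard classification of simple modules of a cellular algebra. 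The extra hypothesis ``$\omega_i\neq 0$ for some $0\le i\le k-1$ when $r$ is even'' is exactly what is needed to ensure the relevant form does not degenerate in the one borderline situation, paralleling the role of $\omega_0=2n\neq 0$ in the level-$2k$ case; I would verify that under this hypothesis no weight in ${\mathbf F}_{r,k}$ with $D(\hat\lambda)\neq 0$ has $\phi_{f,\lambda'}$ vanishing after truncation. Finally part~(4) is pure formalism: by \eqref{equ:decom1}, $[S(\hat\lambda):D(\hat\mu)]=(T^\frp(\hat\mu):M^\frp(\hat\lambda))$, and for cellular algebras idempotent truncation preserves decomposition numbers, $[S(\hat\lambda)e:D(\hat\mu)e]=[S(\hat\lambda):D(\hat\mu)]$ whenever $D(\hat\mu)e\neq 0$, so the formula transports verbatim to $B_{k,r}((-1)^k\mathbf u_k)$.

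The main obstacle will be part~(1): explicitly identifying ${\mathbf F}_{r,k}$ as the non-negativity locus of $\delta^\mu$. This requires unwinding how the idempotent $e$ from Theorem~\ref{Thm:subisok2k1} acts on the weakly cellular basis of \cite[Theorem~2.4]{GRX} — in particular, keeping precise track of how the parameter identifications \eqref{cjuj}--\eqref{uij21} convert the ``disjointness + mirror'' bookkeeping of the $2k$-partitions into a coordinate inequality on $\mathfrak h^*$. Everything else is an application of well-documented cellular-algebra machinery, but this combinatorial translation is where the genuinely new work lies, and it is also where one must be careful that the $\gg 0$ choices of $p_t-p_{t-1}$ from Theorem~\ref{Thm:simpleVerma2} make the correspondence clean and independent of those choices.
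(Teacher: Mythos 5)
Your overall architecture matches the paper's: part (1) reduces to the explicit identification of ${\mathbf F}_{r,k}$ (the paper's Lemma~\ref{frk}); the truncation behaves well because the idempotent $e=\gamma_1'\cdots\gamma_r'$ acts diagonally on the Jucys--Murphy basis of \cite{RSi}, sending $m_{\s,\t}$ to itself or to $0$ according to whether $\s,\t$ lie in $\mathscr T^{up}_{k,r}(\lambda)\subset\mathscr T^{up}_{2k,r}(\lambda)$ (note this diagonal action, not a "general principle" about arbitrary idempotents in cellular algebras, is what makes $C(f,\lambda')e$ the cell modules of $eAe$); and part (4) is the standard compatibility of decomposition numbers with idempotent functors via \cite[Corollary~5.10]{RS-cyc}. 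Your sketch of (1) --- tracking which generalized eigenvalues of the $x_i$ survive $e$ and translating this into $\delta^\mu_i\ge 0$ --- is in the right spirit; the paper carries it out via Verma paths and the content function \eqref{equ:defofcont}, showing that the copy of $M^\frp(\lambda)$ attached to a Verma path lies in the eigenspace $(M_{r,\mathbf c})_{\mathbf b}$ and that an eigenvalue congruent to some $u_{k+1},\ldots,u_{2k}$ is unavoidable exactly when some coordinate of $\delta^\mu$ is negative.

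The genuine gap is in parts (2)--(3). You assert that ``$D(\hat\lambda)e\neq 0$ iff $D(\hat\lambda)\neq 0$ and $\hat\lambda\in{\mathbf F}_{r,k}$'' is ``the general cellular-truncation fact.'' Only the forward implication is general: $D(\lambda)e$ is either zero or simple, but truncation by an idempotent can kill the simple head of a cell module even when the cell module survives --- the restricted form $\phi_{f,\lambda'}|_{C(f,\lambda')e}$ can vanish while $\phi_{f,\lambda'}\neq 0$. This is precisely why $\bar{\mathbf F}_{r,k}$ is introduced as a separate set in \eqref{barfk} and why $\bar{\mathbf F}_{r,k}={\mathbf F}_{r,k}\cap\bar{\mathbf F}_r$ is a theorem rather than a definition. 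You say you ``would verify'' non-degeneracy after truncation but supply no mechanism. The paper closes this by a counting argument: the nonzero $D(f,\lambda')e$ automatically form a complete irredundant set of simple modules of $eAe\cong B_{k,r}((-1)^k\mathbf u_k)^{op}$, and the independent classification of simple $B_{k,r}((-1)^k\mathbf u_k)$-modules in \cite[Theorem~3.12]{RSi} --- which is exactly where the hypothesis $\omega_i\neq 0$ for some $0\le i\le k-1$ (when $r$ is even) enters, rather than in some ``borderline'' degeneration --- shows that this number equals the cardinality of $\{(f,\lambda')\in\bar\Lambda_{2k,r}\mid\hat\lambda\in{\mathbf F}_{r,k}\}$, forcing every such $D(f,\lambda')e$ to be nonzero. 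Without this counting step (or a direct non-degeneracy argument in its place), your proof of (2) and (3), and consequently the stated range of validity of (4), is incomplete.
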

From Theorem~\ref{first123}, we deduce  an explicit description of $\bar {\mathbf F}_{r,k}$. Moreover, we have a concrete construction of $S(\hat \lambda)e$ and $D(
\hat \mu)e$ via $C(f,  \lambda')e$ and $D(f,  \mu')e$.   As a consequence, the decomposition numbers of $B_{k,r}((-1)^k\mathbf u_{k})$ for arbitrary parameters  can be obtained more explicitly using parabolic Kazhdan-Lusztig polynomials of type $D_n$ with a parabolic subgroup of type $A$. 

Finally,  to provide  a concrete description of Condition~\ref{keyconj}, we define  
\begin{equation}
    \label{equ:defofphiA}
    \begin{aligned}     \Phi_A& =\{\epsilon_i-\epsilon_j\mid i<j \} \cap \Phi^+\setminus \Phi_I,\ \  
    \Psi_{\lambda_\mathbf c}^+ & =\{\beta\in \Phi^+\setminus \Phi_I\mid \langle \lambda_\mathbf c+\rho,  \beta^\vee \rangle \in \mathbb Z_{>0} \}.    \end{aligned}
\end{equation}

\begin{THEOREM}\label{mainthm:E}
If 
  $\Phi_A\cap \Psi_{\lambda_{\mathbf c}}^+=\emptyset$, then Condition~\ref{keyconj} holds. In particular, when $k=1$, Condition~\ref{keyconj} holds  unconditionally.  
  \end{THEOREM}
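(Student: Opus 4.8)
The plan is to reduce the saturated condition (Condition~\ref{keyconj}) to the sufficient criterion \eqref{keyass1} already supplied by \cite[Theorem~E]{GRX}, and then to show that the hypothesis $\Phi_A\cap\Psi_{\lambda_{\mathbf c}}^+=\emptyset$ forces \eqref{keyass1} to hold. Recall \eqref{keyass1} asks that $\langle\lambda_{\mathbf c}+\rho,\beta^\vee\rangle\notin\mathbb Z_{>0}$ for every $\beta\in\Phi^+\setminus\Phi_I$; in the notation of \eqref{equ:defofphiA} this is precisely the assertion $\Psi_{\lambda_{\mathbf c}}^+=\emptyset$. So what must be shown is: if $\Psi_{\lambda_{\mathbf c}}^+$ contains no root of the form $\epsilon_i-\epsilon_j$ (i.e.\ no element of $\Phi_A$), then it is empty. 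The positive roots of $\mathfrak{so}_{2n}$ are the $\epsilon_i-\epsilon_j$ and the $\epsilon_i+\epsilon_j$ with $i<j$; thus the content of the theorem is that the ``$+$'' roots $\epsilon_i+\epsilon_j$ never contribute to $\Psi_{\lambda_{\mathbf c}}^+$ once the ``$-$'' roots do not.

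The first step is to write out $\langle\lambda_{\mathbf c}+\rho,(\epsilon_i+\epsilon_j)^\vee\rangle$ and $\langle\lambda_{\mathbf c}+\rho,(\epsilon_i-\epsilon_j)^\vee\rangle$ explicitly using \eqref{deltac} and \eqref{cjuj}. With the standard choice of $\rho$ for type $D_n$ (whose $\epsilon_i$-coordinate is $n-i$, up to the usual normalization), and with $\lambda_{\mathbf c}$ constant equal to $c_t$ on the block of coordinates $p_{t-1}+1,\dots,p_t$, the pairing $\langle\lambda_{\mathbf c}+\rho,\epsilon_a-\epsilon_b\rangle$ is $(\lambda_{\mathbf c}+\rho)_a-(\lambda_{\mathbf c}+\rho)_b$ and $\langle\lambda_{\mathbf c}+\rho,\epsilon_a+\epsilon_b\rangle$ is $(\lambda_{\mathbf c}+\rho)_a+(\lambda_{\mathbf c}+\rho)_b$. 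Using \eqref{cjuj}, namely $c_j=u_j+p_{j-1}-n+\tfrac12$, one computes that for a coordinate $a$ lying in block $t$ the entry $(\lambda_{\mathbf c}+\rho)_a$ equals $u_t+p_{t-1}-n+\tfrac12+(n-a)=u_t+p_{t-1}-a+\tfrac12$, so it is a half-integer whenever $u_t\in\mathbb Z$ and lies in $u_t+\tfrac12+\mathbb Z$ in general. The key observation — this is the second step — is that the two coordinates $a$ and $b$ of any root $\epsilon_a\pm\epsilon_b$ both carry the same ``$+\tfrac12$'' offset, so in the difference $\epsilon_a-\epsilon_b$ the halves cancel while in the sum $\epsilon_a+\epsilon_b$ they add up to an integer shift of $u_s+u_t$ (where $s,t$ are the blocks of $a,b$); this is exactly why type $A$ roots and type $D$-extra roots behave differently modulo $\mathbb Z$, and it is what makes the reduction possible.

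The third step is the disjointness bookkeeping. By Theorem~\ref{Thm:simpleVerma2} and Assumption~\ref{1123} we may (and do) take $p_t-p_{t-1}\gg0$; moreover the parameters in blocks $k+1,\dots,2k$ are chosen via \eqref{uij21} precisely so that $u_{j+1}$ is disjoint from $\mathbf u_j$ for $k\le j\le 2k-1$, and by hypothesis the original $\mathbf u_k$ is whatever it is — but the only statement being proved concerns $\lambda_{\mathbf c}$, which involves only $c_1,\dots,c_k$, hence only $u_1,\dots,u_k$, hence only the first $k$ blocks. I expect the argument to split into two cases for a putative $\beta=\epsilon_a+\epsilon_b\in\Psi_{\lambda_{\mathbf c}}^+$ with $a$ in block $s$, $b$ in block $t$, $s\le t$. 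If $s=t$ (same block, so $\beta\notin\Phi_I$ forces one of $a,b$ to straddle out of $I$ appropriately), then $\langle\lambda_{\mathbf c}+\rho,\beta^\vee\rangle=2u_s+(\text{integer depending on }a,b,p_{s-1})$; positivity of this for suitable $a,b$ would, after the same computation, already be detected by some type $A$ root inside the enlarged configuration, or else is impossible because $p_s-p_{s-1}\gg0$ makes the integer part run over a range symmetric enough that $2u_s+\mathbb Z\ni\text{positive}$ would force $u_s\pm u_s\in\mathbb Z$ with small absolute value, contradicting a disjointness/genericity input. If $s<t$, then $\langle\lambda_{\mathbf c}+\rho,\beta^\vee\rangle=u_s+u_t+(\text{integer})$; since $p_t-p_{t-1}\gg0$ and $p_s-p_{s-1}\gg0$ the integer part ranges over a long interval of consecutive integers centered far from $0$, so membership in $\mathbb Z_{>0}$ is controlled entirely by whether $u_s+u_t\in\mathbb Z$, and — here is where I would invoke the structure of \eqref{equ:defofphiA} — whenever $u_s+u_t\in\mathbb Z$ one simultaneously gets a type $A$ root $\epsilon_a-\epsilon_{b'}$ (with $b'$ a coordinate in block $t$ chosen to make the difference of offsets land in $\mathbb Z_{>0}$) lying in $\Phi_A\cap\Psi_{\lambda_{\mathbf c}}^+$, contradicting the hypothesis.

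The main obstacle I anticipate is precisely the last pairing step: turning ``$u_s+u_t\in\mathbb Z$ and some $\epsilon_a+\epsilon_b$ pairs to a positive integer'' into ``some $\epsilon_a-\epsilon_{b'}$ also pairs to a positive integer.'' This requires that the $p_t-p_{t-1}$ be large enough that the relevant coordinate $b'$ actually exists in block $t$ with the right residue, and one must check the boundary roots (those in $\Phi_I$, or those at the edges $p_{t-1}+1$, $p_t$ where the definition of $I$ in \eqref{lever} removes $\alpha_{p_t}$) do not slip through — this is the same edge-case care needed in \cite{RS-cyc, GRX}. The final sentence of the theorem, the case $k=1$, should then be immediate: when $k=1$ there is a single block, $\Phi_I$ is all of type $A_{n-1}$, $\Phi_A=\emptyset$, so the hypothesis $\Phi_A\cap\Psi_{\lambda_{\mathbf c}}^+=\emptyset$ holds vacuously and Condition~\ref{keyconj} follows unconditionally, recovering the Ehrig--Stroppel situation of \cite{ES}.
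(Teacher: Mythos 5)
Your central reduction is false, and the paper itself tells you so: in Section~3.5 it is stated explicitly that the hypothesis $\Phi_A\cap \Psi_{\lambda_\mathbf c}^+=\emptyset$ is \emph{strictly weaker} than \eqref{keyass1} (i.e.\ than $\Psi_{\lambda_{\mathbf c}}^+=\emptyset$). Your own $k=1$ discussion already exhibits the counterexample: for $k=1$ one has $\Phi_A=\emptyset$, so the hypothesis holds vacuously, yet $(\lambda_{\mathbf c}+\rho)_i=u_1+\tfrac12-i$ gives $\langle\lambda_{\mathbf c}+\rho,(\epsilon_i+\epsilon_j)^\vee\rangle=2u_1+1-i-j$, which lies in $\mathbb Z_{>0}$ for small $i,j$ whenever $2u_1\in\mathbb Z$ and $u_1>1$. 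So $\Psi_{\lambda_{\mathbf c}}^+\neq\emptyset$ in general, and one cannot conclude by simply quoting \cite[Theorem~E]{GRX} via \eqref{keyass1}. Your attempted ``pairing step'' --- converting a root $\epsilon_a+\epsilon_b\in\Psi^+_{\lambda_{\mathbf c}}$ into a root of $\Phi_A\cap\Psi^+_{\lambda_{\mathbf c}}$ --- also fails on its own terms: when $a,b$ lie in the same block, any $\epsilon_a-\epsilon_{b'}$ with $b'$ in that block lies in $\Phi_I$ and hence is excluded from $\Phi_A$ by definition \eqref{equ:defofphiA}; and when $a,b$ lie in blocks $s<t$, the condition $u_s+u_t\in\mathbb Z$ does not imply $u_s-u_t\in\mathbb Z$, so $\langle\lambda_{\mathbf c}+\rho,(\epsilon_a-\epsilon_{b'})^\vee\rangle$ need not be an integer at all.

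What is actually needed --- and what the paper does --- is a direct verification of the saturation property for the surviving roots $\epsilon_i+\epsilon_j\in\Psi^+_{\lambda_{\mathbf c}}$. The proof of Theorem~\ref{mainthm:E} goes through Theorem~\ref{Thm:phidecomposition}, whose key input is Lemma~\ref{lem:keyde}: given $\mu\in\mathbf F_r$ and $\beta=\epsilon_i+\epsilon_j$ with both $\langle\lambda_{\mathbf c}+\rho,\beta^\vee\rangle$ and $\langle\mu+\rho,\beta^\vee\rangle$ in $\mathbb Z_{>0}$, one tracks the coordinates of $w_\beta s_\beta(\mu+\rho)-\rho$ explicitly (using $q_s\gg0$ to locate where the reflected entries land within the blocks) and checks via \eqref{equ:criterioninFr} that the resulting dominant weight still lies in $\mathbf F_r$. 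This lemma replaces \cite[Lemma~6.8]{GRX} in the argument of \cite[\S6]{GRX}, and only the type-$A$ roots are removed by the hypothesis; the type-$D$ roots $\epsilon_i+\epsilon_j$ are handled, not eliminated. Your proposal is missing this entire mechanism.
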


Notably, 
\cite[Theorem~E]{GRX} is a special case of Theorem~E. 
When $k=1$, $\omega_0=0$ is equivalent to $\omega_i= 0$ for all $i\in \mathbb N$. This can be verified using \eqref{w1}-\eqref{uadm} and $u_1=\frac{1}{2} (1-\omega_0)$. 

Moreover, $\Phi_A\cap \Psi_{\lambda_{\mathbf c}}^+=\emptyset$ automatically holds in this setting. Thus, our result strengthens the work of Ehrig and Stroppel in \cite{ES, ES2}, as we provide an explicit construction of  right  cell modules for the cyclotomic Brauer algebra $B_{2, r}(\mathbf u_2)$.

\begin{CONJECTURE}
\label{conjecture} 
Suppose $\Phi_A\cap \Psi_{\lambda_{\mathbf c}}^+\neq \emptyset$. Then Condition~\ref{keyconj} is satisfied, and consequently,   Theorem \ref{first123}~(1)--(4) hold, if $r$ is odd or if  $r$  is even and $\omega_i\neq 0$ for some $0\le i\le k-1$.

\end{CONJECTURE}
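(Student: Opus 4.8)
The strategy is to reduce everything to Condition~\ref{keyconj}. Granting Condition~\ref{keyconj}, Theorem~\ref{first123} applies directly: its part~(1) holds unconditionally, and parts~(2)--(4) hold under the hypothesis that $r$ is odd or that $r$ is even with $\omega_i\neq 0$ for some $0\le i\le k-1$. So the real task is to establish Condition~\ref{keyconj} in the remaining regime $\Phi_A\cap\Psi_{\lambda_{\mathbf c}}^+\neq\emptyset$. The first step is to recast Condition~\ref{keyconj} in structural terms. Since $M^\frp(\lambda_{\mathbf c})$ is tilting (Theorem~\ref{Thm:simpleVerma2}) and tilting modules are stable under tensoring with finite-dimensional modules, $M_{j,\mathbf c}=M^\frp(\lambda_{\mathbf c})\otimes V^{\otimes j}$ is tilting, hence has a parabolic Verma flag, for every $0\le j\le r$; each $M^\frp(\lambda)$ with $\lambda\in\mathbf F_j$ occurs as a subquotient of $M_{j,\mathbf c}$, and every composition factor of $M_{j,\mathbf c}$ is a composition factor of some such $M^\frp(\lambda)$. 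A short argument with $\prec$ then shows that $\mathbf F_j$ is saturated if and only if its ``simple support'' $\{\mu\in\Lambda^\frp : [M_{j,\mathbf c}:L(\mu)]\neq 0\}$ coincides with $\mathbf F_j$; as the inclusion $\mathbf F_j\subseteq\{\mu : [M_{j,\mathbf c}:L(\mu)]\neq 0\}$ is automatic, Condition~\ref{keyconj} is equivalent to: \emph{for every $0\le j\le r$ and every $\lambda\in\mathbf F_j$, all composition factors $L(\mu)$ of $M^\frp(\lambda)$ satisfy $\mu\in\mathbf F_j$.}

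Second, I would localize this to blocks. The weights appearing in $M_{j,\mathbf c}$ are of the form $\lambda_{\mathbf c}+\nu$ with $\nu$ a weight of $V^{\otimes j}$, so they meet only finitely many linkage classes of $\mathcal O^\frp$. Using the large-gap condition $p_t-p_{t-1}\gg 0$ of Theorem~\ref{Thm:simpleVerma2} and the formula $c_j=u_j+p_{j-1}-n+\tfrac12$, one identifies the integral Weyl group of $\lambda_{\mathbf c}+\rho$: it is a direct product of symmetric groups --- the ``type $A$'' part, recording coincidences of residues among and within the blocks $[p_{t-1}+1,p_t]$ --- together with a single factor of type $B$ or $D$; the condition $\Phi_A\cap\Psi_{\lambda_{\mathbf c}}^+\neq\emptyset$ is precisely the statement that this type-$A$ part acts nontrivially on the weights at hand. (When $k=1$, or under \eqref{keyass1}, this never occurs, which is why Theorem~\ref{mainthm:E} is unconditional there and why this case has no analogue in \cite{ES,ES2}.) Within each block, Soergel's combinatorial description~\cite{So} expresses $[M^\frp(\lambda):L(\mu)]$ in terms of parabolic Kazhdan--Lusztig polynomials of type $D_n$ with a parabolic subgroup of type $A$, which can be rendered explicitly through cup/weight-diagram combinatorics in the spirit of \cite{ES,ES2}.

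The decisive third step is to prove, block by block, that composition factors stay inside the combinatorial set: if $L(\mu)$ is a composition factor of $M^\frp(\lambda_{\mathbf c}+\nu)$ with $\lambda_{\mathbf c}+\nu\in\mathbf F_j$, then $\mu=\lambda_{\mathbf c}+\nu'$ for some weight $\nu'$ of $V^{\otimes j}$ with $\lambda_{\mathbf c}+\nu'\in\Lambda^\frp$. I would argue by induction on $j$, transporting the question through the bijection $\lambda\mapsto\hat\lambda$, $\Lambda_{2k,j}\to\mathbf F_j$, of \cite[(4.4)]{GRX} into the language of up--down tableaux on $2k$-multipartitions, and using the generating function $W_{k,1}(u)$ of \eqref{w1} together with the $u$-admissibility condition \eqref{uadm} to keep the type-$A$ interactions under control --- the same apparatus that already replaces the level-one arguments of \cite{ES} in Proposition~\ref{genefatingfu}(1) and Lemma~\ref{lem:rela5}. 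The hypothesis on $r$ enters precisely here: when $r$ is even and all $\omega_i=0$ for $0\le i\le k-1$ a fully contracted diagram is available, the cyclotomic Brauer algebra degenerates, and a Kazhdan--Lusztig arrow can in principle carry a weight out of $\mathbf F_j$; for $r$ odd no fully contracted diagram exists, and for $r$ even the nonvanishing of some $\omega_i$ excludes the bad configuration. An alternative route to the same conclusion is to prove directly that $\End_{\mathcal O^\frp}(M_{j,\mathbf c})$ is cellular with respect to the poset $(\mathbf F_j,\le)$, with cell modules $S(\lambda)=\Hom_{\mathcal O^\frp}(M^\frp(\lambda),M_{j,\mathbf c})$, which by the mechanism of \cite{AST} forces saturation; this appears to need the same combinatorial input.

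I expect the third step to be the main obstacle. Unlike the level-one situation of \cite{ES,ES2}, where every relevant parabolic Verma module is simple, here there are genuinely reducible parabolic Verma modules whose composition-factor supports are controlled by a mixed type-$A$/type-$D$ Kazhdan--Lusztig combinatorics, and this must be handled uniformly in $j$ and matched against the box-moving combinatorics of \cite{GRX}. Settling this --- together with a precise treatment of the exceptional even-$r$, $\omega_i=0$ configuration, which is why the statement carries the parity hypothesis --- is exactly what remains to be carried out.
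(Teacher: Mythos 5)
This statement is labelled a \emph{Conjecture} in the paper: the authors do not prove it, and in fact the surrounding text ("It is worth mentioning that if Conjecture~\ref{conjecture} holds, then\dots") makes clear that establishing it remains open. There is therefore no ``paper's own proof'' to compare against.

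Your proposal reflects a correct understanding of the structure of the problem. The reduction to Condition~\ref{keyconj} is right: once Condition~\ref{keyconj} holds, Theorem~\ref{first123}(1)--(4) follow with the stated parity/$\omega_i$ hypothesis, exactly as the conjecture asserts. The reformulation of saturation in terms of a single step --- every composition factor $L(\mu)$ of $M^\frp(\lambda)$ with $\lambda\in\mathbf F_j$ must have $\mu\in\mathbf F_j$, from which the full $\prec$-chain version follows by induction --- is also sound. And you have correctly located the obstruction: when $\Phi_A\cap\Psi^+_{\lambda_{\mathbf c}}\neq\emptyset$, the type-$A$ component of the integral Weyl group acts nontrivially, so the argument of Lemma~\ref{lem:keyde} and Theorem~\ref{Thm:phidecomposition} (which treats only $\beta=\epsilon_i+\epsilon_j$ and explicitly assumes $\Phi_A\cap\Psi^+_{\lambda_{\mathbf c}}=\emptyset$) no longer controls composition factors arising from reflections $s_\beta$ with $\beta=\epsilon_i-\epsilon_j\in\Phi_A\cap\Psi^+_{\lambda_{\mathbf c}}$. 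That is precisely what the paper's Theorem~\ref{mainthm:E} proves \emph{does not occur} under its hypothesis, and precisely what is left open here.

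However, what you have produced is an outline of a strategy, not a proof. You candidly say of the decisive third step that ``settling this\ldots is exactly what remains to be carried out,'' so no new mathematical content is supplied beyond identifying where the difficulty lies. In particular, there is no argument analogous to Lemma~\ref{lem:keyde} showing that $w_\beta s_\beta(\mu+\rho)-\rho\in\mathbf F_j$ when $\beta\in\Phi_A$, and no treatment of how the parity/$\omega_i$ hypothesis would actually be used to exclude a bad configuration (your remark about ``fully contracted diagrams'' gestures at this but is not connected to the weight combinatorics in any demonstrable way). So the assessment is: correct framing, correct identification of the gap, but the gap is not closed --- which is consistent with the paper's own treatment of this statement as an unproven conjecture.
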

It is worth mentioning
 that if Conjecture \ref{conjecture} holds, then,  combined with  Theorem \ref{mainthm:E},  this would enable us  to explicitly  compute the decomposition numbers of $B_{k,r}((-1)^k\mathbf u_k)$  for arbitrary parameter $\mathbf u_k$, provided that either  $r$ is odd or  $\omega_i\neq 0$ for some $0\le i\le k-1$ when $r$ is even.
 
The paper is organized as follows.
In Section~2, 
we recall the definition of the cyclotomic Brauer algebra over $\mathbb C$ and prove that the cyclotomic Brauer algebra $B_{k, r}((-1)^k \mathbf u_k)$ is isomorphic to an idempotent truncation  of $B_{2k, r}({\mathbf u}_{2k})$, where
$u_{k+1}, \ldots, u_{2k}$ are chosen to satisfy  a certain disjoint condition.

In Section 3, 
we provide an explicit description of the condition under which a parabolic Verma module is simple in a parabolic BBG category $\mathcal O$ associated with the orthogonal Lie algebra $\mathfrak{so}_{2n}$.  This allows us to realize  the cyclotomic Brauer algebra $B_{2k, r}(\mathbf u_{2k})$ as the endomorphism algebra of the tensor product of a suitable simple scalar-type parabolic Verma module with the natural module in the parabolic BBG category $\mathcal O$ in type $D_n$. Here 
 $u_1, u_2, \ldots, u_k$ are arbitrary parameters in $\mathbb C$, and $u_{k+1}, \ldots, u_{2k}$ are defined as in   \eqref{uij21}. 
 Moreover,  we prove that Condition~\ref{keyconj}  holds when $\Phi_A\cap \Psi_{\lambda_\mathbf c}^+=\emptyset$.
 
 In Section~4, we explicitly compute the decomposition numbers of $B_{k, r}((-1)^k \mathbf u_k)$.
 This computation holds when    $\Phi_A\cap \Psi_{\lambda_\mathbf c}^+=\emptyset$, with the additional constraints that either 
 $r$ is odd or $r$ is even,  and $\omega_i\neq 0$ for some $i$.

\section{The cyclotomic Brauer algebra and its idempotent subalgebra }
This section focuses  on realizing  the cyclotomic Brauer algebra  $B_{k,r}((-1)^k\mathbf u_k)$ as an idempotent  truncation  of 
the higher level cyclotomic Brauer algebra  $B_{2k,r}(\mathbf u_{2k})$.  Here the   parameters $u_{k+1},\ldots, u_{2k}$ are selected to   satisfy a specific compatibility condition.
 This  foundational result constitutes the critical first step in our broader objective of computing the decomposition numbers of $B_{k,r}((-1)^k\mathbf u_k)$.
\subsection{Cyclotomic Brauer algebra}
\begin{Defn}
\label{definition of Na}
\cite[Definition~2.13]{AMR} Given two positive  integers $k$ and $r$, and two families of parameters   
 $\mathbf u_k=(u_1, \ldots, u_k)\in \mathbb C^k$, and $\omega=(\omega_i)\in \C^\mathbb N$, the cyclotomic  Nazarov-Wenzl algebra  $ B_{k, r}(\mathbf u_k)$ is the unital associative $\mathbb C$-algebra with  generators  
 $\{e_i, s_i, x_j\mid 1 \le i\le r-1,
1\le j\leq r\}$ and    relations
\begin{multicols}{2}
		\begin{enumerate}
			\item [(1)] $s_i^2=1$,  
			\item[(2)] $s_is_j=s_js_i$, for $|i-j|>1$,
			\item[(3)] $s_is_{i+1}s_i\!=\!s_{i+1}s_is_{i+1}$, 
			\item[(4)] $s_ix_j=x_js_i$, for $j\neq i,i+1$,
			\item[(5)]  $e_1x_1^ke_1=\omega_k e_1, \forall k\in \mathbb N$,
			\item[(6)] $s_ie_j=e_js_i$, for  $|i-j|>1$,
            \item[(7)] $e_ie_j=e_je_i$, for  $|i-j|>1$,
			\item[(8)]  $e_ix_j =x_j e_i$, for  $j\neq i,i+1$,
			\item [(9)] $x_ix_j=x_jx_i$,
			
			\item[(10)]  $s_ix_i-x_{i+1}s_i=e_i-1$,
			\item[(11)]  $x_i s_i-s_i x_{i+1}=e_i-1 $, 
			\item[(12)] $e_i s_i=e_i=s_ie_i$,
			\item[(13)] $s_ie_{i+1}e_i=s_{i+1}e_i$,
			\item[(14)] $e_i e_{i+1}s_i =e_i s_{i+1}$, 
			\item[(15)] $e_i e_{i+1}e_i =e_{i+1}$,
			\item[(16)] $ e_{i+1}e_i e_{i+1} =e_i$, 
			\item [(17)]  $e_i(x_i+x_{i+1})=(x_i+x_{i+1})e_i=0$,
 			\item[(18)]  $(x_1-u_1)(x_1-u_2)\cdots (x_1-u_k)=0$.
		\end{enumerate}
	\end{multicols}
 \end{Defn}

The cyclotomic Nazarov-Wenzl algebra is defined as a quotient of the affine Wenzl algebra in \cite[\S4]{Na}. Following  Nazarov's suggestion in~\cite{Na1}, we refer to the affine Wenzl algebra and the cyclotomic Nazarov-Wenzl algebra as the \textsf{affine Brauer algebra}, and the \textsf{cyclotomic Brauer algebra}, respectively. 

When $k=1$, this algebra  reduces to the Brauer algebra originally  defined  in~\cite{Bra}. The decomposition numbers for  the Brauer algebra over $\mathbb C$  were computed in \cite{CDVM, CDVM1}, and  a conceptual explanation (up to a permutation of cell modules) in the framework of Lie theory  was given in \cite{ES, ES2}. 
 
 Throughout this paper, we always  assume $k>1$, although our arguments remain valid  when $k=1$. Let  $u$ be an indeterminate. Define   \begin{equation}
\label{w1}
W_{k,1}(u)=\sum_{a=0}^\infty\frac{\omega_a}{u^a},\end{equation}
where $\omega_a$'s are parameters within $B_{k, r}(\mathbf u_k)$ in Definition \ref{definition of Na}. The current $W_{k, 1}(u)$ corresponds to  $W_1(u)$
in \cite{AMR}. We include  $k$ as a subscript to emphasis its association with $B_{k, r}(\mathbf u_k)$, as we will later  consider 
$B_{k, r}(\mathbf u_k)$ for different values of  $k$.  

The family of parameters 
$\omega= (\omega_i)\in \mathbb C^\mathbb N$ is called \textsf{$\mathbf u_k$-admissible} (see  
\cite[Definition~3.6,~Lemma~3.8 ]{AMR})
if   \begin{equation}
\label{uadm} 
 W_{k,1}(u)+u-\frac 12 =(u-\frac{1}{2}(-1)^k)\prod_{i=1}^k \frac{u+u_i}{u-u_i}.\end{equation} 
It is proven in \cite[Theorem~5.5]{AMR} that  
$$\dim_\mathbb C B_{k, r}(\mathbf u_k)\le k^r(2r-1)!!,$$ with  equality holding   if and only if  
 $\omega$  is $\mathbf u_k$-admissible. Moreover,  
Goodman~\cite[Corollary~6.6]{G09} established that any finite-dimensional simple module $D$ of the affine Brauer algebra factors through a cyclotomic Hecke algebra if $e_1 D=0$, and through a cyclotomic Brauer algebra satisfying $\mathbf u_k$-admissible condition if $e_1D\neq 0$. Consequently, it suffices to study the representation theory of $ B_{k, r}(\mathbf u_k)$  under  the $\mathbf u_k$-admissible condition. This approach has been applied to classify finite-dimensional simple modules of $q$-analog of affine Brauer algebras over an algebraically closed field. For details, see~\cite{R}.

From this point onward, we will always assume that  $\omega$ is  $\mathbf u_k$-admissible when we discuss $B_{k, r}(\mathbf u_k)$. By \eqref{uadm}, the family of parameters $\omega$ is uniquely determined by $\mathbf u_k$ whenever $\omega$ is $\mathbf u_k$-admissible.   This explains why it is unnecessary to include $\omega$ explicitly in the notation for the cyclotomic Brauer algebra.
\begin{rem}
    In Theorem \ref{first123},  and Conjecture \ref{conjecture}, we impose  the condition 
    that $\omega_i \neq 0$ for some $0 \leq i \leq k-1$ if $r$ is even.  If this condition does not hold, then $\omega_i = 0$ for all $i \geq 0$ \cite[Lemma 3.9]{RSi}. Consequently,   by \eqref{uadm}, this is equivalent to 
    \begin{equation}
    \label{equ:conditionforu}
     (u-\frac{1}{2})\prod_{i=1}^k(u-(-1)^ku_i)   =(u-\frac{1}{2}(-1)^k)\prod_{i=1}^k(u+(-1)^ku_i).
    \end{equation}
Thus, the condition $\omega_i \neq 0$ for some $0 \leq i \leq k-1$ is equivalent to requiring that the  parameters $\mathbf{u_k}$ for $B_{k,r}((-1)^k \mathbf{u_k})$ do not satisfy the equation \eqref{equ:conditionforu}. In particular, in the level-one case (i.e., $k = 1$), this implies $u_1 \neq \frac{1}{2}$.
 \end{rem} 

\subsection{Some elementary relations for $B_{k+1, r}(\mathbf u_{k+1})$} 
We begin by  realizing $B_{k, r}(-\mathbf u_{k})$
 with arbitrary parameters  $-\mathbf u_k$  as an idempotent truncation of $B_{k+1, r} (\mathbf u_{k+1})$, where the parameter $u_{k+1}$ is chosen appropriately. For the case  $k = 1$, this result was established in the seminal work \cite{ES}, where a specific key idea played a pivotal role in the construction. In what follows, we always assume that $k\ge 2$, although our arguments remain valid when $k=1$.

Throughout this subsection, we assume that \textsf{$u_{k+1}$  is disjoint from  $\mathbf u_k$}. From  \cite[Theorem~5.5]{AMR} for   $B_{k+1, r}(\mathbf u_{k+1})$, the elements   $x_1, x_2, \ldots, x_r$   generate a  commutative subalgebra of $B_{k+1, r}(\mathbf{u}_{k+1})$. Consequently,  any finite-dimensional $B_{k+1, r}(\mathbf{u}_{k+1})$-module $M$ admits a decomposition
\begin{equation}\label{decpro}
M = \bigoplus_{\mathbf{i} \in \C^r} M_{\mathbf{i}},
\end{equation}
where $M_{\mathbf{i}} = \{m \in M \mid (x_j - i_j)^a m = 0 \text{ for all } 1 \leq j \leq r \text{ and } a\gg 0\}$.

Recall the Jucys-Murphy basis of 
$B_{k+1,r}(\mathbf u_{k+1})$ \cite[Proposition 5.8]{RSi} with each basis element being a generalized eigenvector of $x_i$, $1\le i\le r$, where  the eigenvalues lie in $I_{\mathbf u_{k+1}}:=\{\pm u_j+\Z\mid 1\le j\le k+1 \}$ \cite[Theorem 5.12]{RSi}.  
Considering the decomposition \eqref{decpro} for the regular module, it follows that there exists a system $\{e(\mathbf i)\mid \mathbf i\in I_{\mathbf u_{k+1}}^r\}$ of mutually orthogonal idempotents in $B_{k+1,r}(\mathbf u_{k+1})$ such that $e(\mathbf i)M=M_\mathbf i$ for each finite-dimensional module $M$. In fact, each $e(\mathbf i)$ lies in the commutative subalgebra generated by  $x_1, \ldots, x_r$, all but finitely many of the $e(\mathbf i)$'s are necessarily zero, and their sum is the identity element in $B_{k+1,r}(\mathbf u_{k+1})$. 
\begin{Defn}
    Suppose  $1\le i\le r$. We define \begin{itemize}\item[(1)]  $\gamma_i\in B_{k+1,r}(\mathbf u_{k+1})$, the idempotent that projects onto the generalized eigenspace of $B_{k+1, r}(\mathbf u_{k+1})$ with respect to $x_i$, where the corresponding   eigenvalue is disjoint from $u_{k+1}$,\item [(2)] $\mathbf f=\gamma_1\gamma_2\ldots \gamma_r$,    \item [(3)] $b_i=x_i+u_{k+1}$ and $ c_i=-x_i+u_{k+1}$.\end{itemize} \end{Defn}
In particular, each $\gamma_i$ and $\mf$ lie in the commutative subalgebra generated by $x_1,\ldots,x_r$.

\begin{rem}
    As explained in \cite{ES} for $B_{2, r}(\mathbf u_2)$, consider  any $B_{k+1,r}(\mathbf u_{k+1})$-module $M$. The eigenvalues of $x_i$ acting on $\mathbf fM$  are  disjoint from $u_{k+1} $. Consequently, the operators  $ b_i$ and $c_i$ act invertibly on   $\mathbf f M$,  as their actions are non-degenerate. This ensures that      $\frac{1}{b_i}$ and $\frac{1}{c_i}$ are  well-defined on $\mathbf fM$.
\end{rem}

 The following two lemmas are essentially the same as \cite[Lemma 3.12]{ES} and \cite[Proposition 3.13]{ES}, respectively. The only difference lies  choice of  $u_{k+1}$, which is disjoint from $u_1,\ldots, u_k$ rather than  $\beta$, which is disjoint from $\alpha$ in \cite{ES}. This adjustment ensures that the arguments extend naturally to the level  $k+1 $ setting considered here. 
 
 \begin{Lemma} (c.f. \cite[Lemma 3.12]{ES})
 \label{Lem:ES3.12}
 For $1\le i\le r-1$,  the following equations hold in $B_{k+1,r}(\mathbf u_{k+1})$:
 \begin{itemize}
     \item[(1)] $b_{i+1}s_i=s_ib_i-e_i+1$, and $\mathbf fs_i\frac{1}{b_i}\mathbf f= \frac{1}{b_{i+1}}\mf s_i\mf -\frac{1}{b_{i+1}}\mf e_i\frac{1}{b_i}\mf +\frac{1}{b_ib_{i+1}}\mf$,
     \item[(2)] $s_ib_{i+1}=b_is_i-e_i+1$, and $\frac{1}{b_i}\mf s_i\mf =\mf s_i\frac{1}{b_{i+1}}\mf -\frac{1}{b_i}\mf e_i\frac{1}{b_{i+1}}\mf + \frac{1}{b_ib_{i+1}}\mf $,
      \item[(3)] $c_{i+1}s_i=s_ic_i+e_i-1$, and $\mathbf fs_i\frac{1}{c_i}\mathbf f= \frac{1}{c_{i+1}}\mf s_i\mf +\frac{1}{c_{i+1}}\mf e_i\frac{1}{c_i}\mf -\frac{1}{c_ic_{i+1}}\mf$,
     \item[(4)] $s_ic_{i+1}=c_is_i+e_i-1$, and $\frac{1}{c_i}\mf s_i\mf =\mf s_i\frac{1}{c_{i+1}}\mf +\frac{1}{c_i}\mf e_i\frac{1}{c_{i+1}}\mf - \frac{1}{c_ic_{i+1}}\mf $.
 \end{itemize}
 \end{Lemma}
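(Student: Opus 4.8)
\textbf{Plan of proof for Lemma~\ref{Lem:ES3.12}.} The plan is to prove the four statements by the same strategy used for $k=1$ in \cite[Lemma~3.12]{ES}, since the only ingredients needed are the defining relations of $B_{k+1,r}(\mathbf u_{k+1})$ in Definition~\ref{definition of Na} together with the fact that $b_i=x_i+u_{k+1}$ and $c_i=-x_i+u_{k+1}$ act invertibly on $\mathbf f M$ — and neither of these depends on the level being $1$. First I would establish the ``integral'' identities (the first equation in each of (1)--(4)), and then deduce the ``$\mathbf f$-localized'' versions (the second equation in each part) by a purely formal manipulation.

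For part (1), starting from relation~(11), $x_is_i-s_ix_{i+1}=e_i-1$, I add $u_{k+1}s_i$ to both sides and rearrange to get $b_is_i-s_ib_{i+1}=e_i-1$, i.e. $s_ib_{i+1}=b_is_i-e_i+1$; applying the anti-automorphism or instead using relation~(10), $s_ix_i-x_{i+1}s_i=e_i-1$, gives $b_{i+1}s_i=s_ib_i-e_i+1$, which is the first equation of~(1). The first equations of (2), (3), (4) follow the same way: (2) is just the other of the pair (10)/(11) repackaged; for (3) and (4) one uses $c_i=-x_i+u_{k+1}$, so that $c_{i+1}s_i=-x_{i+1}s_i+u_{k+1}s_i=-(s_ix_i-e_i+1)+u_{k+1}s_i=s_ic_i+e_i-1$, and similarly with the roles reversed. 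Here I would double-check the signs carefully, since the $e_i-1$ term changes sign when passing from $b$ to $c$; this is the one spot where a routine computation could go wrong.

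To pass to the $\mathbf f$-localized forms, I would work inside $\End_{\mathbb C}(\mathbf f M)$ for an arbitrary finite-dimensional module $M$, where $b_i,b_{i+1}$ (resp. $c_i,c_{i+1}$) are invertible. From $b_{i+1}s_i=s_ib_i-e_i+1$, left-multiply by $b_{i+1}^{-1}$ and right-multiply by $b_i^{-1}$ to obtain $s_i b_i^{-1}=b_{i+1}^{-1}s_i-b_{i+1}^{-1}e_ib_i^{-1}+b_{i+1}^{-1}b_i^{-1}$, which is exactly the claimed identity once one inserts $\mathbf f$'s and recalls that $\mathbf f$ commutes with $x_1,\dots,x_r$ (hence with $b_i$, $c_i$, $b_i^{-1}$, $c_i^{-1}$) and that $\mathbf f$ is idempotent. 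The relation $\frac{1}{b_i}\mathbf f s_i\mathbf f=\mathbf f s_i\frac{1}{b_{i+1}}\mathbf f-\frac{1}{b_i}\mathbf f e_i\frac{1}{b_{i+1}}\mathbf f+\frac{1}{b_ib_{i+1}}\mathbf f$ in (2) comes the same way from $s_ib_{i+1}=b_is_i-e_i+1$, and (3),(4) are identical with $c$ in place of $b$ and the sign of the $e_i$-term flipped. Since this holds on every finite-dimensional $M$, in particular on the regular module, it holds as an identity in $\mathbf f B_{k+1,r}(\mathbf u_{k+1})\mathbf f$.

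\textbf{Main obstacle.} Honestly there is no deep obstacle: the whole point of the paragraph preceding the lemma is that the passage from level $1$ to level $k+1$ changes nothing in \emph{these} particular identities — the disjointness hypothesis on $u_{k+1}$ is used only to guarantee that $b_i,c_i$ act invertibly on $\mathbf f M$, which is the same mechanism as in \cite{ES}. The one place demanding genuine care is bookkeeping: keeping the $+e_i-1$ versus $-e_i+1$ signs straight across the $b$/$c$ dichotomy and across the two members of each pair (10)/(11), and making sure the $\mathbf f$'s are inserted in positions consistent with $\mathbf f$ being central in the $x$-subalgebra. (The genuinely new input needed for higher levels — namely the generating-function argument via $W_{k,1}(u)$ and the $u$-admissibility condition — enters only in Proposition~\ref{genefatingfu}(1) and Lemma~\ref{lem:rela5}, not here.)
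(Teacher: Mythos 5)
Your proof is correct and is essentially the argument the paper itself invokes: the paper gives no independent proof of this lemma, simply asserting that the computation of \cite[Lemma 3.12]{ES} carries over verbatim with $u_{k+1}$ in place of the parameter used there, and your derivation of the integral identities from relations (10)--(11) of Definition~\ref{definition of Na} followed by compression with $\frac{1}{b_{i+1}}\mathbf f$ and $\frac{1}{b_i}\mathbf f$ (resp.\ the $c$-versions) is exactly that computation. The sign bookkeeping across the $b$/$c$ dichotomy and the placement of the $\mathbf f$'s both check out.
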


 \begin{Lemma}(c.f. \cite[Proposition 3.13]{ES})
  \label{Lem:ES3.13}  For $1\le i\le r-1$, the following equations hold in $B_{k+1,r}(\mathbf u_{k+1})$:  
     
\begin{multicols}{2}
    \begin{enumerate}
        \item[(1a)] $c_i\mf s_i\mf =c_is_i\mf$,
        \item[(1b)] $b_{i+1}\mf s_i\mf =b_{i+1}s_i\mf$,
         \item[(1c)] $c_i\mf e_i\mf =c_ie_i\mf$,
        \item [(1d)] $b_{i+1}\mf e_i\mf =b_{i+1}e_i\mf$,
    \end{enumerate}

    \begin{enumerate}
        \item[(2a)] $e_i\mf s_{i+1}\mf =e_is_{i+1}\mf $,
        \item [(2b)] $e_i\mf e_{i+1}\mf =e_ie_{i+1}\mf $,
        \item [(2c)] $\mf e_i\mf s_{i+1}\mf e_i\mf =\mf e_is_{i+1}e_i\mf$,
        \item [(2d)] $\mf s_i\mf s_{i+1}\mf =\mf s_is_{i+1}\mf$,
        \item [(2e)] $\mf s_i\mf e_{i+1}\mf =\mf s_ie_{i+1}\mf$, 
    \end{enumerate} 
\end{multicols}
as well as all of these equations with $i$ and $i+1$ swapped.
\end{Lemma}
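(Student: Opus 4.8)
\textbf{Proof plan for Lemma~\ref{Lem:ES3.13}.}
The plan is to follow the strategy of \cite[Proposition 3.13]{ES} almost verbatim, since the only structural change is that the role of ``$\beta$ disjoint from $\alpha$'' is now played by ``$u_{k+1}$ disjoint from $\mathbf u_k$''; all the relations used are relations in the affine Brauer algebra and hence hold for every level. The common mechanism behind every identity is that $\mf$ is an idempotent lying in the commutative subalgebra generated by $x_1,\dots,x_r$, so $\mf$ commutes with each $x_j$, and $\mf$ acts as a projection onto the direct sum of those generalized weight spaces $M_{\mathbf i}$ for which every coordinate $i_j$ is disjoint from $u_{k+1}$ (equivalently, $b_j$ and $c_j$ act invertibly there). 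The point to exploit throughout is: if a generator $g\in\{s_i,e_i\}$ moves the generalized $x$-eigenvalues only in a controlled way, then the composite $\mf g \mf$ can be compared with $g\mf$ by checking that the ``extra'' idempotent factor in $\mf g\mf$ acts as the identity on the image of $g\mf$.

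For the identities of type (1), I would argue as follows. Consider (1a), $c_i\mf s_i\mf = c_i s_i \mf$. Write $\mf = \gamma_i \mf'$ where $\mf'=\prod_{t\neq i}\gamma_t$ commutes with $s_i$ up to the Brauer relations and, more importantly, the relevant claim reduces to showing $\gamma_i s_i \mf = s_i\mf$ after multiplying by $c_i$. Using relation (11) in Definition~\ref{definition of Na} in the form $x_i s_i - s_i x_{i+1} = e_i-1$, one sees that $s_i$ intertwines the $x_i$-action and the $x_{i+1}$-action modulo $e_i-1$; combined with the fact that $e_i$ annihilates $x_i+x_{i+1}$ (relation (17)) and with relation (5) controlling $e_1 x_1^k e_1$, the eigenvalue of $x_i$ on $s_i\mf M$ is forced to be disjoint from $u_{k+1}$ whenever the corresponding $x_{i+1}$-eigenvalue on $\mf M$ is, except possibly on the image of $e_i$. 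The scalar $c_i = -x_i+u_{k+1}$ is precisely what is needed to kill that exceptional contribution: multiplying through by $c_i$ and using $e_i(x_i+x_{i+1})=0$ removes the ``bad'' eigenspace, so $\gamma_i$ acts as the identity there and (1a) follows. Identities (1b), (1c), (1d) are the same argument with $b_{i+1}=x_{i+1}+u_{k+1}$ in place of $c_i$ and with $s_i$ replaced by $e_i$ where appropriate; one also uses relation (12), $e_i s_i = e_i = s_i e_i$, and relation (10) to handle the $e_i$ cases. For all of (1), I would in fact deduce them directly from Lemma~\ref{Lem:ES3.12}: e.g.\ multiplying the first equation of Lemma~\ref{Lem:ES3.12}(3) by $\mf$ on the right and on the left and using that $\mf$ is central among the $x_j$'s gives $c_{i+1}\mf s_i\mf = \mf s_i c_i\mf + (\mf e_i\mf - \mf)$, and iterating such manipulations isolates the stated form.

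For the identities of type (2), the pattern is: $e_i$ or $s_i$ involves only strands $i,i+1$, and $s_{i+1},e_{i+1}$ only strands $i+1,i+2$, so the idempotent factors $\gamma_j$ for $j\notin\{i,i+1,i+2\}$ commute past everything and can be cancelled; what remains is a computation with $\gamma_i,\gamma_{i+1},\gamma_{i+2}$ and the braid-type relations (3), (6), (7), (13)--(16). The claim in each case is that, after restricting to $\mf M$, the images of the various subwords already lie in the ``good'' eigenspaces, so the intermediate $\mf$ factors may be inserted or deleted freely. Concretely, for (2a) one uses relation (6), $s_i e_{i+1}=e_{i+1}s_i$ fails but $s_i e_j = e_j s_i$ for $|i-j|>1$ does not apply; instead one uses relations (13),(14) relating $s_i e_{i+1} e_i$ and $s_{i+1} e_i$ together with the eigenvalue bookkeeping to see $e_i \gamma_{i+1} s_{i+1}\mf = e_i s_{i+1}\mf$. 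For (2d) and (2e) one uses the braid relation (3) and relation (2) together with the fact that conjugation by $s_i$ permutes $x_i\leftrightarrow x_{i+1}$ modulo $e_i-1$, so that $\gamma_i,\gamma_{i+1}$ get permuted and the surviving idempotent is again the identity on the relevant space. Finally, ``all of these equations with $i$ and $i+1$ swapped'' follows by the symmetry $i\leftrightarrow i+1$ (or, if one prefers, by applying the anti-automorphism of $B_{k+1,r}(\mathbf u_{k+1})$ fixing the generators).

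\textbf{Main obstacle.} The genuinely delicate point is the eigenvalue bookkeeping in the presence of $e_i$: relations (10),(11) only intertwine $x_i$ and $x_{i+1}$ \emph{modulo} $e_i-1$, and relations (5),(17) pin down the $x$-action on the image of $e_i$ only up to the parameters $\omega_a$. So one must verify that the ``bad'' eigenspace that $\gamma_i$ might kill is always contained in the image of $e_i$ and is therefore annihilated by the compensating factor $b_{i+1}$ or $c_i$ — this is exactly where the disjointness of $u_{k+1}$ from $\mathbf u_k$ is used, via the description of the $x$-eigenvalues in $I_{\mathbf u_{k+1}}=\{\pm u_j+\Z\}$ from \cite[Theorem 5.12]{RSi}. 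Once that containment is checked (which is the content of the $k=1$ argument in \cite{ES}, and goes through unchanged because it never used $k=1$), every identity in the lemma reduces to a short manipulation with idempotents in the commutative subalgebra $\langle x_1,\dots,x_r\rangle$ together with the defining relations (1)--(17).
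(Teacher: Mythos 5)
Your plan takes the same route as the paper: the paper's entire ``proof'' of this lemma is the citation to \cite[Proposition 3.13]{ES} together with the remark that replacing ``$\beta$ disjoint from $\alpha$'' by ``$u_{k+1}$ disjoint from $\mathbf u_k$'' lets the level-one argument go through verbatim, and your central mechanism (reduce to $\gamma_i,\gamma_{i+1},\gamma_{i+2}$, track generalized $x$-eigenvalues via relations (10), (11), (17), and observe that the discrepancy between $\mf g\mf$ and $g\mf$ is supported on the image of $e_i$, where the compensating factor $c_i$ or $b_{i+1}$ annihilates it) is exactly the mechanism of \cite{ES}. Your ``main obstacle'' paragraph correctly isolates the one point that actually has to be checked: that the exceptional eigenvalue of $x_i$ occurring in $(1-\mf)s_i\mf$ or $(1-\mf)e_i\mf$ is \emph{exactly} $u_{k+1}$ (the content of the unique node addable to the empty $(k{+}1)$-st component of a ``good'' multipartition), and not merely some element of $\pm u_{k+1}+\mathbb Z$; only then do $c_i=-x_i+u_{k+1}$ and $b_{i+1}=x_{i+1}+u_{k+1}$ literally kill it, rather than act nilpotently. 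This is where \cite[Theorem~5.12]{RSi} enters, as you say, and it is insensitive to $k$.

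Two parenthetical suggestions in your plan should be dropped. First, the proposal to ``deduce (1a)--(1d) directly from Lemma~\ref{Lem:ES3.12}'' by multiplying its identities by $\mf$ on both sides is circular: e.g.\ for (1b), writing $b_{i+1}\mf s_i\mf=\mf(s_ib_i-e_i+1)\mf$ just reduces the claim to $(1-\mf)b_{i+1}s_i\mf=0$, which is the original statement; the spectral argument cannot be avoided. Second, the ``equations with $i$ and $i+1$ swapped'' do not follow from an anti-automorphism fixing the generators (that would reverse the order of the factors, relating $e_i\mf s_{i+1}\mf$ to $\mf s_{i+1}\mf e_i$, not to $e_{i+1}\mf s_i\mf$), nor from a literal symmetry $i\leftrightarrow i+1$ of the algebra; as in \cite{ES} they are proved by running the same eigenvalue argument again. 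Neither issue affects your primary line of argument, which is the one the paper intends.
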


The proof of  Proposition \ref{genefatingfu}(1) represents one of the main distinctions between the $ k = 1$  case discussed in \cite{ES} and the case $ k > 1$   addressed here. It seems  that the approach employed in the proof of \cite[Proposition 6.2 (iii)]{ES} could not be extended to higher levels. 
 
 \begin{Prop}
 \label{genefatingfu}  For $1\le i\le r-1$, the following equations hold in $B_{k+1,r}(\mathbf u_{k+1})$:
 \begin{multicols}{2} 
     \item[(1)] $e_i\frac{1}{b_i}e_i \mf =(1+\frac{1}{2u_{k+1}})e_i\mf $,
     \item [(2)]$e_i\frac{1}{b_i}s_i\mf =\frac{1}{2u_{k+1}}e_i\frac{1}{b_i}\mf $,
     \item [(3)] $\mf s_i\frac{1}{b_i}e_i\mf =\frac{1}{2u_{k+1}}\frac{1}{b_i}\mf e_i\mf$.
 \end{multicols}     
 \end{Prop}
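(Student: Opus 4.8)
The plan is to work entirely inside the idempotent‐truncated algebra $\mf B_{k+1,r}(\mathbf u_{k+1})\mf$, where the operators $b_i=x_i+u_{k+1}$ and $c_i=-x_i+u_{k+1}$ act invertibly, and to reduce everything to the generating‐function identity \eqref{uadm} together with the relations in Lemma~\ref{Lem:ES3.12} and Lemma~\ref{Lem:ES3.13}. The central object is $e_i\frac{1}{b_i}e_i\mf$; since $e_1 x_1^a e_1 = \omega_a e_1$ and this relation propagates to all $i$ by conjugating with braid/cap--cup elements, the action of $x_i$ sandwiched between two $e_i$'s is governed by the $\omega_a$'s, hence by $W_{k+1,1}(u)$. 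Concretely, I expect $e_i\, g(x_i)\, e_i \mf = c\cdot e_i\mf$ where $c$ is obtained by evaluating a generating function: for $g(x_i)=\frac1{b_i}=\frac{1}{x_i+u_{k+1}}$ one formally sums $\sum_{a\ge 0}(-1)^a \omega_a / u_{k+1}^{a+1}$, which is $\tfrac1{u_{k+1}}W_{k+1,1}(-u_{k+1})$ after matching the expansion in \eqref{w1}. Then \eqref{uadm} applied at $u=-u_{k+1}$ — noting that the factor $\frac{u+u_{k+1}}{u-u_{k+1}}$ in the product over $i=1,\dots,k+1$ vanishes at $u=-u_{k+1}$, so the whole right-hand side collapses — pins down $W_{k+1,1}(-u_{k+1})$, and a short manipulation yields the coefficient $1+\frac{1}{2u_{k+1}}$. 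This proves (1).

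For (2) and (3): (3) is the image of (2) under the anti-automorphism of $B_{k+1,r}(\mathbf u_{k+1})$ fixing $e_i,s_i,x_j$ (which sends $b_i\mapsto b_i$, $\mf\mapsto\mf$ since $\mf$ lies in the commutative subalgebra generated by the $x_j$'s), so it suffices to establish (2). To get (2), I would start from relation (12), $e_i s_i = e_i$, rewritten via Lemma~\ref{Lem:ES3.12}: using $e_i\frac{1}{b_i} = e_i s_i \frac{1}{b_i}$ and then pushing $s_i$ to the right through $\frac{1}{b_i}$ with part~(1) of Lemma~\ref{Lem:ES3.12}, one obtains an identity relating $e_i\frac{1}{b_i}s_i\mf$, $e_i\frac{1}{b_{i+1}}s_i\mf$, $e_i\frac1{b_{i+1}}e_i\frac1{b_i}\mf$, and $e_i\frac{1}{b_ib_{i+1}}\mf$. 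The terms $e_i\frac1{b_{i+1}}(\cdots)$ are handled using relation (17), $e_i(x_i+x_{i+1})=0$, which forces $e_i b_{i+1}\mf = e_i(2u_{k+1}-b_i)\mf$ and hence $e_i\frac{1}{b_{i+1}}\mf = e_i\frac{1}{2u_{k+1}-b_i}\mf$; combined with part (1) this lets every term be expressed through $e_i\frac1{b_i}\mf$, $e_i\mf$, and $e_i\frac1{b_i}s_i\mf$. Solving the resulting linear relation for $e_i\frac1{b_i}s_i\mf$ gives the factor $\frac{1}{2u_{k+1}}$.

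A cleaner alternative route for (2), which I would pursue in parallel, is to multiply relation (10), $s_ix_i - x_{i+1}s_i = e_i-1$, on the left by $e_i\frac{1}{b_i}$ and on the right by $\mf$, using $e_i s_i=e_i$ and $e_i(x_i+x_{i+1})=0$ to eliminate $x_{i+1}$; this directly expresses $e_i\frac1{b_i}s_i\mf$ in terms of $e_i\frac1{b_i}e_i\mf$ and $e_i\frac1{b_i}\mf$, whereupon (1) finishes it. The main obstacle I anticipate is the correct bookkeeping in the generating‐function step of part~(1): one must check that the formal identity $e_i\,\frac{1}{x_i+u_{k+1}}\,e_i\mf$ can legitimately be evaluated by term-by-term substitution into $\sum_a \omega_a/u^a$ — i.e. that on the finite-dimensional module $\mf M$ the eigenvalues of $x_i$ are bounded away from $-u_{k+1}$ (which is exactly the disjointness hypothesis on $u_{k+1}$, guaranteeing $b_i$ invertible on $\mf M$) and that the relevant geometric series converges — and that the $\omega_a$ appearing here are the ones for $B_{k+1,r}(\mathbf u_{k+1})$, so that \eqref{uadm} must be used at level $k+1$, not level $k$. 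Getting the sign conventions in \eqref{w1}–\eqref{uadm} right at the evaluation point $u=-u_{k+1}$ is where the computation is most delicate; everything else is a finite linear-algebra manipulation inside $\mf B_{k+1,r}(\mathbf u_{k+1})\mf$.
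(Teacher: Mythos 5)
Your treatment of parts (2) and (3) is sound and essentially follows the paper: (2) is obtained by expanding $e_i\frac{1}{b_i}s_i\mf$ with Lemma~\ref{Lem:ES3.12}, converting $e_i\frac{1}{b_{i+1}}$ into $e_i\frac{1}{2u_{k+1}-b_i}$ via relation (17), and invoking (1); and (3) can indeed be deduced from (2) by the anti-automorphism fixing the generators (the paper instead derives it directly in parallel with (2), but your route is equally valid).

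The gap is in part (1), for $i\ge 2$. You assert that $e_1x_1^ae_1=\omega_ae_1$ ``propagates to all $i$ by conjugating with braid/cap--cup elements'', so that $e_i\frac{1}{b_i}e_i$ is governed by $W_{k+1,1}(u)$. This is false: by \cite[Lemma~4.15]{AMR} one has $e_ix_i^ae_i=\omega_i^{(a)}e_i$, where $\omega_i^{(a)}$ is a central element of $B_{k+1,i-1}(\mathbf u_{k+1})$, i.e.\ a nonscalar polynomial in $x_1,\ldots,x_{i-1}$, and already $\omega_2^{(1)}\neq\omega_1$ in general; there is no automorphism carrying $(e_1,x_1)$ to $(e_i,x_i)$ (note, for instance, that $x_1$ satisfies the degree-$(k+1)$ cyclotomic relation while $x_i$ does not). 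Consequently your evaluation $e_i\frac{1}{b_i}e_i\mf=\frac{1}{u_{k+1}}W_{k+1,1}(-u_{k+1})\,e_i\mf$ is justified only for $i=1$. The correct quantity is $\frac{1}{u_{k+1}}W_i(-u_{k+1})$ with $W_i(u)=\sum_{a\ge0}\omega_i^{(a)}u^{-a}$, and to see that this is still the scalar $1+\frac{1}{2u_{k+1}}$ one needs the factorization of \cite[Lemma~4.15, Proposition~4.17]{AMR},
\[
W_i(u)+u-\tfrac12=\bigl(W_{k+1,1}(u)+u-\tfrac12\bigr)\prod_{j=1}^{i-1}\frac{(u+x_j)^2-1}{(u-x_j)^2-1}\cdot\frac{(u-x_j)^2}{(u+x_j)^2},
\]
together with the observations that $u+u_{k+1}$ divides $W_{k+1,1}(u)+u-\frac12$ by \eqref{uadm} and that the product factor is regular at $u=-u_{k+1}$ on $\mf M$ (which uses the disjointness of $u_{k+1}$ from the eigenvalues of $x_1,\ldots,x_{i-1}$ there). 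This is precisely the step the paper singles out as the new ingredient at higher level, and it is missing from your argument.
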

 \begin{proof}
From   \cite[Lemma~4.15]{AMR},  let  
$W_i(u)=\sum_{a= 0}^\infty \frac{\omega_i^{(a)}}{u^{a}}$, 
where $\omega_i^{(a)} \in \mathbb C[x_1, x_2, \ldots, x_{i-1}]$ is a central element in $B_{k+1,i-1}(\mathbf u_{k+1})$ determined by the relations 
    $e_i x_i^a e_i=\omega_i^{(a)} e_i $, $1\le i\le r-1$.
    In particular, $W_1(u)=W_{k+1,1}(u)$
    as in \eqref{w1}.
    Moreover, by \cite[Lemma~4.15, Proposition~4.17]{AMR}, 
    $$W_{i}(u)+u-\frac{1}{2}=(W_{k+1, 1}(u)+u-\frac{1}{2})\prod_{j=1}^{i-1}\frac{(u+x_j)^2-1}{(u-x_j)^2-1}\frac{(u-x_j)^2}{(u+x_j)^2}.$$
This equation implies  that 
\begin{equation}
\label{equa:eatuk1}
 \frac{ W_{i}(u)}{u}
\mid_{u=-u_{k+1}}= -1-\frac{1}{2u_{k+1}},    
\end{equation}
because $u+u_{k+1}$ is a factor of $W_{k+1, 1}(u)+u-\frac{1}{2} $,  as given in  \eqref{uadm}.
Consequently, 
     $$ e_i\frac{1}{b_i}e_i\mf= - {W_{i}(u)}{u^{-1}}\mid _{u=-u_{k+1}}e_i\mf \overset{\eqref{equa:eatuk1}}= (1+\frac{1}{2u_{k+1}}) e_i\mf ,$$
     and (1) follows. 
 We have 
   \begin{align*}
     e_i\frac{1}{b_i}s_i\mf &= 
    e_i s_i\frac{1}{b_{i+1}}\mf -e_i\frac{1}{b_i}e_i\frac{1}{b_{i+1}}\mf + e_i\frac{1}{b_ib_{i+1}}\mf\ \   \text{ by Lemma \ref{Lem:ES3.12}(2)} \\
  &  \overset{(1)}= 
  e_is_i\frac{1}{b_{i+1}}\mf -(1+\frac{1}{2u_{k+1}})e_i\frac{1}{b_{i+1}}\mf +e_i\frac{1}{b_ib_{i+1}}\mf \\
  & = e_i\frac{1}{b_{i+1}}\mf -(1+\frac{1}{2u_{k+1}})e_i\frac{1}{b_{i+1}}\mf +e_i\frac{1}{b_ib_{i+1}}\mf
     \\
     &=
     e_i(\frac{2u_{k+1}-b_i}{2u_{k+1}b_ib_{i+1}})\mf
  =   e_i(\frac{c_i}{2u_{k+1}b_ib_{i+1}})\mf\\
  &=
  e_i(\frac{b_{i+1}}{2u_{k+1}b_ib_{i+1}})\mf=
  \frac{1}{2u_{k+1}} e_i \frac{1}{b_i}\mf, 
   \end{align*}
proving (2).  
We leave  (3) to the reader, as it can be derived similarly from (1).  \end{proof}

\subsection{The algebra isomorphism}
The following definition is motivated by \cite{ES}, where the $k=1$ was discussed. 
\begin{Defn}
\label{def-of-tile}
    For all  $1\le i\le r-1$, and $1\le j\le r$ define 
    \begin{multicols}
        {2} \item [(1)]$Q_i=\sqrt{\frac{b_{i+1}}{b_i}}\mf$,         
        \item [(2)] $\tilde s_i= -Q_is_i Q_i +\frac{1}{b_i}\mf$, 
        \item [(3)] $\tilde e_i= Q_i e_iQ_i$, \item [(4)] $\tilde x_j=-x_j \mf$.        
        \end{multicols}
   
\end{Defn}
These are elements in $\mf B_{k+1, r}(\mathbf u)\mf $. Naturally, as in \cite[(4.1)]{ES}, we must fix the square root of $\frac{b_{i+1}}{b_i}$ once and for all.   
Define 
\begin{equation}
\label{hatW}  \widehat W_1(u)+u-\frac12 = (u-\frac{1}{2}(-1)^k) \prod_{i=1}^k \frac{u-u_i}{u+u_i}.\end{equation}
Since we assume that $\omega$ (for the cyclotomic Brauer algebra $B_{k,r}(-\mathbf u_k)$) is 
$-\mathbf u_{k}$-admissible, by  \eqref{w1}--\eqref{uadm}, we have 
$\widehat W_1(u)=\sum_{i\ge 0}\omega_i u^{-i}$ with the property that 
$ e_1 x_1 ^i e_1=\omega_i e_1$ in 
$ B_{k,r}(-\mathbf u_k)$,
for all  $i\in \mathbb N$. In other words, 
  $\widehat W_1(u)=W_{k,1}(u)$ for $B_{k,r}(-\mathbf u_k)$ as defined in \eqref{uadm}.

 The proof of the following result using the generating function $\widehat W_1(u)$ is the most surprising aspect for higher levels, as the approach for $k=1$ in \cite{ES} does not seem  applicable in this case.   
\begin{Lemma}
\label{lem:rela5}
For any $i\in \mathbb N$,   $\tilde e_1 \tilde x_1^i\tilde e _1= \omega_i \tilde e_1$ in $\mf B_{k+1, r}(\mathbf u_{k+1})\mf $.        
\end{Lemma}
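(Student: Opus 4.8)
The plan is to compute $\tilde e_1 \tilde x_1^i \tilde e_1$ directly from the definitions in Definition~\ref{def-of-tile} and reduce everything to the action of $e_1$ on $\mf B_{k+1,r}(\mathbf u_{k+1})\mf$, where the parameter $u_{k+1}$ has been chosen disjoint from $\mathbf u_k$. Recall $\tilde e_1 = Q_1 e_1 Q_1$ with $Q_1 = \sqrt{b_2/b_1}\,\mf$, and $\tilde x_1 = -x_1\mf$. First I would observe that $x_1$ commutes with $Q_1$ and $\mf$ (all lie in the commutative subalgebra generated by $x_1,\dots,x_r$), and that $e_1$ commutes with $b_2 = x_2 + u_{k+1}$ up to the relation $e_1(x_1+x_2)=0$, i.e. $e_1 x_2 = -x_1 e_1$ on the relevant space, so $e_1 b_2 = e_1(x_2+u_{k+1}) = (-x_1+u_{k+1})e_1 = c_1 e_1$. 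Using this, $\tilde e_1 \tilde x_1^i \tilde e_1 = Q_1 e_1 \sqrt{b_2/b_1}\,(-x_1)^i\,\sqrt{b_2/b_1}\, e_1 Q_1 = Q_1 e_1 \frac{b_2}{b_1}(-x_1)^i e_1 Q_1$, and pushing $b_2$ through $e_1$ gives $Q_1 \frac{c_1}{b_1}e_1(-x_1)^i e_1 Q_1 = Q_1 \frac{c_1}{b_1}\,\omega_1^{(i)\prime}\, e_1 Q_1$ where $e_1 x_1^i e_1 = \widehat W$-coefficients times $e_1$ — more precisely, by the relation $e_1 x_1^a e_1 = \omega_a e_1$ in $B_{k+1,r}(\mathbf u_{k+1})$, we get $e_1(-x_1)^i e_1 = (-1)^i \omega_i^{\mathrm{old}} e_1$ where the $\omega$'s here are those of $B_{k+1,r}(\mathbf u_{k+1})$, NOT the target algebra. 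So the heart of the matter is to show the resulting scalar series matches $\widehat W_1(u)$, the generating function for $B_{k,r}(-\mathbf u_k)$.

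The cleanest way to organize this is via generating functions rather than term-by-term. Introduce $f(u) := \sum_{i\ge 0} (\tilde e_1 \tilde x_1^i \tilde e_1)\, u^{-i}$, which by the above manipulation equals $Q_1 \frac{c_1}{b_1} \big(\sum_{i\ge 0} e_1 (-x_1)^i e_1\, u^{-i}\big) Q_1 = Q_1 \frac{c_1}{b_1}\, e_1 W_1(-u)|_{\text{formal}} e_1 Q_1$, where $W_1(u) = W_{k+1,1}(u)$ is the generating function of $B_{k+1,r}(\mathbf u_{k+1})$ from \eqref{w1}, so that $\sum e_1 x_1^i e_1 u^{-i} = W_{k+1,1}(u) e_1$. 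Substituting $u \to -u$: $\sum e_1 (-x_1)^i e_1 u^{-i} = W_{k+1,1}(-u) e_1$. But we must be careful: $\frac{c_1}{b_1}$ depends on $x_1$, so it does not simply pull out past $e_1$; instead I would fold it in, writing $e_1 \frac{b_2}{b_1}(-x_1)^i e_1 = e_1 \frac{c_1}{b_1}(-x_1)^i e_1$ and then use the generating-function identity $\sum_{i\ge 0} e_1 \frac{c_1}{b_1}(-x_1)^i e_1 u^{-i} = e_1 \frac{c_1}{b_1}\cdot\frac{1}{1 + x_1/u}\cdot(\text{careful: this is the geometric series in } x_1)$ — actually it is cleaner to keep $\frac{c_1}{b_1}$ inside and use that $e_1 g(x_1) e_1 = \big(\text{scalar from } W_1\big) e_1$ for any formal power series $g$, where the scalar is obtained by the substitution encoded in \cite[Lemma~4.15, Proposition~4.17]{AMR} and \eqref{uadm}. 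The upshot will be $f(u) = Q_1^2 \cdot \kappa(u)\, e_1$ for an explicit scalar function $\kappa(u)$, and $Q_1^2 = \frac{b_2}{b_1}\mf$; pushing the remaining $b_2$ through $e_1$ once more and simplifying should collapse $Q_1^2 e_1 = \frac{c_1}{b_1} e_1 \cdot(\text{...})$, ultimately giving $f(u) = \widehat W_1(u)\, \tilde e_1$, which is exactly the claim.

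The key computational input is the product formula from \cite[Proposition~4.17]{AMR},
\[
W_1(u) + u - \tfrac12 = \Big(W_{k+1,1}(u) + u - \tfrac12\Big),
\]
together with \eqref{uadm} for $B_{k+1,r}(\mathbf u_{k+1})$, which says $W_{k+1,1}(u) + u - \tfrac12 = (u - \tfrac12(-1)^{k+1})\prod_{i=1}^{k+1}\frac{u+u_i}{u-u_i}$, and the target identity \eqref{hatW}, $\widehat W_1(u) + u - \tfrac12 = (u - \tfrac12(-1)^k)\prod_{i=1}^k \frac{u-u_i}{u+u_i}$. The plan is to show that after substituting $u\to -u$ in $W_{k+1,1}$, multiplying by the rational function of $x_1$ coming from $\frac{c_1}{b_1} = \frac{-x_1+u_{k+1}}{x_1+u_{k+1}}$ evaluated appropriately, and using the choice \eqref{uij21}-style relation between $u_{k+1}$ and the other parameters (here $u_{k+1}$ disjoint from $\mathbf u_k$, and in the target algebra $B_{k,r}(-\mathbf u_k)$ we match against $-\mathbf u_k = (-u_1,\dots,-u_k)$), the two sides agree as formal power series in $u^{-1}$. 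The sign bookkeeping — $(-1)^k$ versus $(-1)^{k+1}$, $u_i$ versus $-u_i$, and the extra factor $\frac{u+u_{k+1}}{u-u_{k+1}}$ in $W_{k+1,1}$ getting cancelled or transformed by the $\frac{c_1}{b_1}$ insertion and the $Q_1^2$ factor — is where all the care is needed.

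\textbf{Main obstacle.} The genuine difficulty is not any single identity but correctly tracking how the noncommutative factors $\frac{1}{b_1}$, $\frac{c_1}{b_1}$, and $Q_1 = \sqrt{b_2/b_1}\,\mf$ interact with $e_1$ and with powers of $x_1$: one must repeatedly use $e_1 b_2 = c_1 e_1$ (a consequence of relation (17) in Definition~\ref{definition of Na}) and the fact that everything in sight is a function of the commuting $x_j$'s on $\mf M$, where $b_i, c_i$ act invertibly. Once the expression is reduced to $(\text{scalar series})\cdot \tilde e_1$, verifying that the scalar series is $\widehat W_1(u)$ is a finite manipulation of the rational functions in \eqref{uadm} and \eqref{hatW} under $u \mapsto -u$ and the $\frac{c_1}{b_1}$ twist — routine but sign-sensitive. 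I expect the cleanest write-up evaluates both sides at the formal level of generating functions in $u^{-1}$ and invokes \cite[Proposition~4.17]{AMR} as the black box that makes the product formula work.
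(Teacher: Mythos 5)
Your proposal follows essentially the same route as the paper: pass to the generating function $\sum_{a\ge 0}\tilde e_1\tilde x_1^a\tilde e_1 u^{-a}$, rewrite it as $Q_1 e_1\frac{c_1}{b_1}\sum_a\frac{(-x_1)^a}{u^a}e_1Q_1$ using relation (17), evaluate the sandwiched rational function of $x_1$, and match the resulting scalar series against $\widehat W_1(u)$ from \eqref{hatW}; the cancellation of the factor $\frac{u+u_{k+1}}{u-u_{k+1}}$ against the extra factor in $W_{k+1,1}$ that you anticipate is exactly what happens. Two points, though. First, a couple of your intermediate identities mix left and right multiplication: from $e_1(x_1+x_2)=(x_1+x_2)e_1=0$ one gets $e_1 b_2=e_1c_1$ and $b_2e_1=c_1e_1$, but \emph{not} $e_1b_2=c_1e_1$, so $\frac{c_1}{b_1}$ cannot be pulled to the left of $e_1$ (you do self-correct this), and the final answer is $\kappa(u)\,\tilde e_1=\kappa(u)\,Q_1e_1Q_1$, not "$Q_1^2\kappa(u)e_1$". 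Second, and more substantively, the principle "$e_1g(x_1)e_1=(\text{scalar})e_1$ for any formal power series $g$" does not by itself cover the term $\frac{1}{b_1}=\frac{1}{x_1+u_{k+1}}$, which is not a power series in $x_1$ and is only defined on $\mf M$ where $b_1$ acts invertibly. The paper handles this by the partial-fraction identity
\begin{equation*}
\frac{u_{k+1}-x_1}{u_{k+1}+x_1}\cdot\frac{u}{u+x_1}
=\frac{2u_{k+1}u}{u-u_{k+1}}\Bigl(\frac{1}{u_{k+1}+x_1}-\frac{1}{u+x_1}\Bigr)-\frac{u}{u+x_1},
\end{equation*}
which reduces everything to the two computable quantities $e_1\frac{u}{u+x_1}e_1=W_{k+1,1}(-u)e_1$ and $e_1\frac{1}{b_1}e_1\mf=(1+\frac{1}{2u_{k+1}})e_1\mf$; the latter is Proposition~\ref{genefatingfu}(1), whose proof (evaluating $W_1(u)/u$ at $u=-u_{k+1}$, legitimate precisely because $u+u_{k+1}$ divides $W_{k+1,1}(u)+u-\frac12$ by admissibility) is the genuinely new input for $k>1$ and is what your phrase "the substitution encoded in [AMR]" is silently standing in for. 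Your displayed version of [AMR, Prop.~4.17] is also only the trivial $i=1$ case. With Proposition~\ref{genefatingfu}(1) invoked explicitly and the partial fractions carried out, your plan becomes the paper's proof.
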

\begin{proof}
  By  Definition~\ref{def-of-tile}, Definition~\ref{definition of Na}(17), and \eqref{hatW},  we have 
  \begin{align*}
    \tilde e_1 \sum_{a\ge 0}\frac{(-x_1)^a}{u^a}\tilde e_1&= Q_1 e_1 \frac{b_2}{b_1}\sum_{a\ge 0}\frac{(-x_1)^a}{u^a}e_1 Q_1\\
    &= Q_1 e_1 \frac{c_1}{b_1}\sum_{a\ge 0}\frac{(-x_1)^a}{u^a}e_1 Q_1 \\
    &=Q_1 e_1 \frac{u_{k+1}-x_1}{u_{k+1}+x_1} \frac{u}{u+x_1} e_1 Q_1\\
    &= Q_1 e_1( \frac{2u_{k+1}}{u_{k+1}+x_1} \frac{u}{u+x_1}-\frac{u}{u+x_1} )e_1 Q_1\\
    &= Q_1 e_1(\frac{2u_{k+1}u}{u-u_{k+1}}( \frac{1}{u_{k+1}+x_1}-\frac{1}{u+x_1})-\frac{u}{u+x_1} )e_1 Q_1\\
     &= \frac{2u_{k+1}u}{u-u_{k+1}}Q_1  e_1 \frac{1}{u_{k+1}+x_1} e_1 Q_1 - \frac{ u+u_{k+1}}{u-u_{k+1}}Q_1  e_1 \frac{u}{u+x_1} e_1 Q_1\\
     & = \frac{2u_{k+1}u}{u-u_{k+1}}
     (1+\frac{1}{2u_{k+1}})\tilde e_1 -\frac{ u+u_{k+1}}{u-u_{k+1}}    W_{k+1,1}(-u)\tilde e_1 \ \ \  \text{ by Proposition~\ref{genefatingfu}}(1) \\
     &\overset{\eqref{uadm}}= \frac{2u_{k+1}u}{u-u_{k+1}}
     (1+\frac{1}{2u_{k+1}})\tilde e_1 -(u+\frac{1}{2})\frac{u+u_{k+1}}{u-u_{k+1}}\tilde e_1
     +  
     (u-\frac{1}{2}(-1)^k)\prod_{i=1}^k\frac{u-u_i}{u+u_i}\tilde e_1\\
     &= \widehat W_1(u)\tilde e_1= W_{k,1}(u)\tilde e_1.
  \end{align*} 
 This immediately leads  to the desired  result. 
\end{proof}

\begin{Lemma}
\label{Lem:12-14}For any $1\le i\le r-1$, the following equations hold in $\mf B_{k+1,r}(\mathbf u_{k+1})\mf $:
\begin{multicols}{2}
    \item[(1)] $ \tilde e_i(\tilde x_i+\tilde x_{i+1})= (\tilde x_i+\tilde x_{i+1})\tilde e_i=0$,
    \item[(2)] $ \tilde e_i\tilde x_l=\tilde x_l\tilde e_i$,  $ \tilde s_i\tilde x_l=\tilde x_l\tilde s_i$  if $l\neq i,i+1 $,
    \item[(3)] $ \tilde x_i\tilde x_j=\tilde x_j\tilde x_i$,
    \item[(4)] $(\tilde x_1+u_1)(\tilde x_1+u_2)\cdots (\tilde x_1+u_k)\mf=0 $.\end{multicols}

\end{Lemma}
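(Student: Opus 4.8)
\textbf{Proof proposal for Lemma~\ref{Lem:12-14}.} The plan is to verify the four identities directly from Definition~\ref{def-of-tile}, reducing each to a known relation in $B_{k+1,r}(\mathbf u_{k+1})$ together with the fact that $\mf$, $b_i$, $c_i$ and the idempotents $\gamma_i$ all lie in the commutative subalgebra generated by $x_1,\ldots,x_r$, so that $\mf$ commutes with every $x_l$, with $Q_i$, and with $\tilde x_l=-x_l\mf$. I would treat (3), (2), (1), (4) in roughly that order, since each later part leans on the commutation facts established earlier.

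For (3): $\tilde x_i\tilde x_j=x_ix_j\mf=x_jx_i\mf=\tilde x_j\tilde x_i$ using Definition~\ref{definition of Na}(9) and $\mf^2=\mf$. For (2): when $l\neq i,i+1$, Definition~\ref{definition of Na}(8) gives $e_ix_l=x_le_i$, and since $b_i,b_{i+1}$ are polynomials in the $x$'s they commute with $x_l$; hence $Q_i$ commutes with $x_l$, and therefore $\tilde e_i\tilde x_l=Q_ie_iQ_ix_l\mf=x_lQ_ie_iQ_i\mf=\tilde x_l\tilde e_i$. For $\tilde s_i\tilde x_l$ one argues the same way from Definition~\ref{definition of Na}(4) together with $[x_l,b_i]=[x_l,b_{i+1}]=0$, noting $\tilde s_i=-Q_is_iQ_i+\frac1{b_i}\mf$ and that $\frac1{b_i}\mf$ is in the commutative subalgebra. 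For (1): starting from Definition~\ref{definition of Na}(17), $e_i(x_i+x_{i+1})=0$, so $\tilde e_i(\tilde x_i+\tilde x_{i+1})=Q_ie_iQ_i\cdot(-(x_i+x_{i+1}))\mf$; since $Q_i$ is a square root of $\frac{b_{i+1}}{b_i}\mf$ it is central in the commutative subalgebra generated by the $x$'s, so we may move it past $(x_i+x_{i+1})$ and get $-Q_i\cdot e_i(x_i+x_{i+1})\cdot Q_i\mf=0$; the other equality is symmetric. For (4): by Definition~\ref{definition of Na}(18) for $B_{k+1,r}(\mathbf u_{k+1})$ one has $(x_1-u_1)\cdots(x_1-u_k)(x_1-u_{k+1})=0$, but this is the wrong product for the claim; instead I would use that on the image of $\mf$ the eigenvalues of $x_1$ are disjoint from $u_{k+1}$, hence in particular $x_1-u_{k+1}$ (i.e.\ $-c_1$) acts invertibly on $\mf B_{k+1,r}(\mathbf u_{k+1})\mf$, so the relation $(x_1-u_1)\cdots(x_1-u_{k+1})\mf=0$ forces $(x_1-u_1)\cdots(x_1-u_k)\mf=0$; substituting $x_1=-\tilde x_1$ on the image of $\mf$ gives $(-\tilde x_1-u_1)\cdots(-\tilde x_1-u_k)\mf=0$, and multiplying by $(-1)^k$ yields $(\tilde x_1+u_1)\cdots(\tilde x_1+u_k)\mf=0$.

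The main subtlety I anticipate is part (4): one must be careful that the parameter set for the level-$(k+1)$ algebra is $\mathbf u_{k+1}=(u_1,\ldots,u_k,u_{k+1})$ while the claimed polynomial identity only involves $u_1,\ldots,u_k$, so the argument genuinely needs the invertibility of $b_1=x_1+u_{k+1}$ — equivalently $c_1=u_{k+1}-x_1$ — on $\mf M$, which is exactly the content of the Remark following Definition~\ref{def-of-tile} (the eigenvalues of $x_i$ on $\mf M$ are disjoint from, hence in particular not equal to, $u_{k+1}$, using that $u_{k+1}$ is disjoint from $\mathbf u_k$ and that all eigenvalues lie in $I_{\mathbf u_{k+1}}$). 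Everything else is a routine transport of the defining relations of Definition~\ref{definition of Na} through the commuting elements $\mf$, $Q_i$, and $\frac1{b_i}\mf$, and I do not expect any obstacle in parts (1)--(3).
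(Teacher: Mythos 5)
Your proposal is correct and follows essentially the same route as the paper: parts (1)--(3) are the same transport of Definition~\ref{definition of Na}(8), (9), (17) through the commuting elements $\mf$ and $Q_i$, and part (4) is exactly the paper's argument of cancelling the invertible factor $(x_1-u_{k+1})\mf$ from the level-$(k+1)$ cyclotomic relation. The sign bookkeeping you make explicit at the end of (4) is left implicit in the paper but is the same computation.
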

\begin{proof} Since $Q_i$ commutates with $x_j$ and $Q_i \mf=\mathbf f Q_i=Q_i$, we have 
$$     \tilde e_i( \tilde x_i+\tilde x_{i+1}) = - Q_i e_i Q_i(x_i+x_{i+1})\mf 
       = - Q_i e_i (x_i+x_{i+1})Q_i=0, $$
where the last equality follows from Definition \ref{definition of Na}(17).
Hence, the first equality in (1) follows.    The second identity in (1) can be derived similarly. Next, we compute
   $$  \tilde e_i \tilde x_l= -Q_i e_i Q_i x_l\mf = -Q_i e_ix_l  Q_i \mf = -Q_i x_l e_i Q_i \mf
    = -x_l Q_ie_i Q_i=\tilde x_l \tilde e_i.
   $$
   Here the third equality follows from Definition~\ref{definition of Na}(8). 
   One can derive the second one in (2)  similarly.
We remark that 
   (3) follows from the definition directly. Finally, we have 
   $$(  x_1-u_1)(  x_1-u_2)\cdots (  x_1-u_k)(x-u_{k+1})=0 $$
   in $B_{k+1,r}(\mathbf u_{k+1})$ by Definition \ref{definition of Na} (18). Since $ (x_1-u_{k+1})\mf $ is invertible, we immediately have   
   $$(  x_1-u_1)(  x_1-u_2)\cdots (  x_1-u_k)\mf =0,$$ proving (4). 
\end{proof}

\begin{Lemma}
\label{Lem:relaforBrauer}
The following relations hold in $\mf B_{k+1,r}(\mathbf u_{k+1})\mf $:
\begin{multicols}{2}
\begin{itemize}
    \item  [(1)] $\tilde s_i^2=\mf $,  $1\le i< r$,
\item[(2)] $\tilde s_i\tilde s_j=\tilde s_j\tilde s_i$, if $|i-j|>1$,
\item[(3)] $\tilde s_i\tilde s_{i+1}\tilde s_i\!=\!\tilde s_{i+1}\tilde s_i\tilde s_{i+1}$, $1\!\le\! i\!<\!r\!-\!1$,
\item[(4)]$\tilde e_i \tilde s_i=\tilde e_i=\tilde s_i\tilde e_i$, $1\leq i\leq r-1$,
\item[(5)] $\tilde s_i\tilde e_j=\tilde e_j\tilde s_i$, if  $|i-j|>1$,
\item[(6)] $\tilde e_i\tilde e_j=\tilde e_j\tilde e_i$, if  $|i-j|>1$,
\item[(7)]$\tilde s_i\tilde e_{i+1}\tilde e_i=\tilde s_{i+1}\tilde e_i$, $1\leq i\leq r-2$,
\item [(8)]$\tilde e_i\tilde  e_{i+1}\tilde s_i =\tilde e_i \tilde s_{i+1}$, $1\leq i\leq r-2$,
\item[(9)]$\tilde e_i \tilde e_{i+1}\tilde e_i =\tilde e_{i+1}$, $1\leq i\leq r-2$,
\item[(10)]$\tilde e_{i+1} \tilde e_{i}\tilde e_{i+1} =\tilde e_{i}$, $1\leq i\leq r-2$.
\end{itemize}
\end{multicols}
 \end{Lemma}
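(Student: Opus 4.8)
The plan is to verify the ten relations family by family, in each case substituting the formulas of Definition~\ref{def-of-tile} and reducing to an identity inside $B_{k+1,r}(\mathbf u_{k+1})$ by means of three facts used repeatedly: $Q_i$ lies in the commutative subalgebra generated by $x_1,\dots,x_r$; $Q_i\mf=\mf Q_i=Q_i$; and $Q_i^2=\tfrac{b_{i+1}}{b_i}\mf$, which acts invertibly on $\mf B_{k+1,r}(\mathbf u_{k+1})\mf$. The three relations (2), (5), (6) with $|i-j|>1$ are immediate: there $Q_i$ and $Q_j$ commute with one another and with $s_j$, $e_j$ by Definition~\ref{definition of Na}(4),(6),(8), so after gathering the $Q$'s the claim reduces to relations (2),(6),(7) of Definition~\ref{definition of Na}.

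The adjacent relations are the substance, and the plan is to run the computations of \cite{ES} now that Lemmas~\ref{Lem:ES3.12}, \ref{Lem:ES3.13} and Proposition~\ref{genefatingfu} are in place. For (1) I would expand $\tilde s_i^2=(-Q_is_iQ_i+\tfrac1{b_i}\mf)^2$, clear the inner $Q_i^2$, commute $\tfrac1{b_i}\mf$ past $\mf s_i\mf$ using Lemma~\ref{Lem:ES3.12}(1),(2), and cancel the leftover $e_i$-terms with $e_is_i=s_ie_i=e_i$ and Proposition~\ref{genefatingfu}(2),(3), so that $\tilde s_i^2$ collapses to $\mf$. For (4) I would similarly clear $Q_i^2$, replace $e_ib_{i+1}$ by $e_ic_i$ (relation (17) of Definition~\ref{definition of Na}, equivalently Lemma~\ref{Lem:ES3.13}(1c),(1d)), write $c_i=2u_{k+1}-b_i$, and apply Proposition~\ref{genefatingfu}(2) (or (3) for the mirror identity $\tilde s_i\tilde e_i=\tilde e_i$) to land on $\tilde e_i$. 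Relations (7)--(10) follow the same template: substitute the definitions, use Lemma~\ref{Lem:ES3.13}(2) to rewrite the products $\tilde e_i\tilde e_{i+1}$, $\tilde s_i\tilde e_{i+1}$, $\tilde e_i\tilde e_{i+1}\tilde e_i$, etc.\ as $Q$-conjugates of the honest products $e_ie_{i+1}\mf$, $s_ie_{i+1}\mf$, $e_ie_{i+1}e_i\mf$, then invoke the Brauer relations (13)--(16) of Definition~\ref{definition of Na} and check that the outer $Q$-factors on the two sides agree once the middle index has been absorbed. The braid relation (3) is the longest: expanding both sides of $\tilde s_i\tilde s_{i+1}\tilde s_i=\tilde s_{i+1}\tilde s_i\tilde s_{i+1}$ yields a sum of $Q$-conjugated monomials in $s_i,s_{i+1},e_i,e_{i+1}$ weighted by rational functions of $b_i,b_{i+1},b_{i+2}$, which one matches term by term using $s_is_{i+1}s_i=s_{i+1}s_is_{i+1}$, Lemma~\ref{Lem:ES3.13}(2d),(2e) and its $i\leftrightarrow i+1$ counterpart, and Lemma~\ref{Lem:ES3.12}.

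I expect the main obstacle to be the bookkeeping in (3) and (4): one must arrange the rational functions in the $b_j$'s so that everything cancels, which is the part \cite[\S6]{ES} handles for $k=1$. The point of the preceding subsections is precisely that the only genuinely new input for $k>1$---the evaluation of the generating function at $u=-u_{k+1}$ forced by $\mathbf u_{k+1}$-admissibility---has already been packaged into Proposition~\ref{genefatingfu}; with that in hand, no new idea is needed, and the arguments of \cite{ES} transfer with the sole change that $u_{k+1}$ is disjoint from $u_1,\dots,u_k$ rather than $\beta$ being disjoint from $\alpha$.
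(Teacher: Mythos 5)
Your proposal is correct and follows essentially the same route as the paper: the paper's proof simply observes that the $k=1$ computations in \cite[Lemmas 6.4--6.11]{ES} depend only on \cite[Lemma 3.2, Proposition 3.13, Proposition 6.2]{ES}, whose $k>1$ analogues are exactly Lemma~\ref{Lem:ES3.12}, Lemma~\ref{Lem:ES3.13} and Proposition~\ref{genefatingfu}, so the arguments transfer verbatim. Your more detailed relation-by-relation sketch fills in what the paper delegates to \cite{ES}, but the underlying strategy and the three key inputs are identical.
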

\begin{proof}
    Note that the corresponding proof for $k=1$ in  the proof of Lemmas 6.4 -- 6.11 of \cite{ES}
      relies only on Lemma 3.2, and Proposition 3.13, and Proposition 6.2 in \cite{ES}. 
      The analogs of  these results  for $k>1$ have been established in Lemma \ref{Lem:ES3.12}, Lemma \ref{Lem:ES3.13}, and Proposition \ref{genefatingfu}, respectively. Therefore, the result follows by applying the same arguments used in \cite{ES} for $k=1$ to the cases $k>1$.   
\end{proof}

\begin{Lemma}
\label{Lem:rel1516} For any $1\le i\le r-1$, the following equations hold in $\mf B_{k+1,r}(\mathbf u_{k+1})\mf $ :
\begin{multicols}
    {2}

  \item[(1)]$\tilde s_i \tilde x_i- \tilde x_{i+1}\tilde s_i=\tilde e_i-\mf $,
\item[(2)]$ \tilde x_i \tilde s_i-\tilde s_i  \tilde x_{i+1}=\tilde e_i-\mf  $.
\end{multicols}
  \end{Lemma}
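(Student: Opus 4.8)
The plan is to verify the two relations by expanding the tilde-generators from Definition~\ref{def-of-tile} and reducing everything to known relations in $B_{k+1,r}(\mathbf u_{k+1})$, working inside $\mf B_{k+1,r}(\mathbf u_{k+1})\mf$. Since $Q_i=\sqrt{b_{i+1}/b_i}\,\mf$ commutes with all $x_j$ and satisfies $Q_i\mf=\mf Q_i=Q_i$, while $\tilde x_j=-x_j\mf$, the main work is to handle the conjugation by $Q_i$ together with the shift $x_j\mapsto -x_j$ coming from the sign in $\tilde x_j$. Concretely, for (1) I would compute
\[
\tilde s_i\tilde x_i-\tilde x_{i+1}\tilde s_i
= \bigl(-Q_is_iQ_i+\tfrac{1}{b_i}\mf\bigr)(-x_i\mf)-(-x_{i+1}\mf)\bigl(-Q_is_iQ_i+\tfrac{1}{b_i}\mf\bigr),
\]
and similarly for (2). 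Expanding, the $\tfrac{1}{b_i}\mf$-terms contribute $-\tfrac{x_i}{b_i}\mf+\tfrac{x_{i+1}}{b_i}\mf=\tfrac{x_{i+1}-x_i}{b_i}\mf$, and the $Q_is_iQ_i$-terms contribute $Q_i(s_ix_i-x_{i+1}s_i)Q_i\cdot(\text{sign})$, which by Definition~\ref{definition of Na}(10) equals $Q_i(e_i-1)Q_i=\tilde e_i-\mf$ up to the bookkeeping. Collecting, one needs the leftover rational terms in $b_i,b_{i+1}$ to cancel, and this is where relations (10)--(11) of Definition~\ref{definition of Na} rewritten via $b_j=x_j+u_{k+1}$ — i.e. $s_ib_i-b_{i+1}s_i=e_i-1$ and its variant, essentially Lemma~\ref{Lem:ES3.12}(1)--(2) — are the key input.

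Here is the order I would carry it out. First I would record the elementary identity $Q_i s_i Q_i\,\mf = \mf\, Q_i s_i Q_i$ and that conjugation by $Q_i$ fixes each $x_j$, so that $Q_i x_j Q_i = x_j\mf$ whenever $Q_i$ and $x_j$ both act; more precisely $Q_i(\text{polynomial in }x)\,Q_i = (\text{same polynomial})\cdot\mf$ only after moving one $Q_i$ past, using $Q_i^2 = (b_{i+1}/b_i)\mf$. Second, I would substitute and use Definition~\ref{definition of Na}(10)--(11) in the form that appears inside Lemma~\ref{Lem:ES3.12}: namely $b_{i+1}s_i = s_ib_i-e_i+1$ and $s_ib_{i+1}=b_is_i-e_i+1$ (together with the $c$-versions, though for $\tilde x_j=-x_j\mf$ the relevant sign means one may equally use either $b$ or $c$ — I expect $b$ to be the natural one, but I would double-check which sign of shift matches $\tilde x_j=-x_j\mf$). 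Third, I would collect the $Q_i\,e_i\,Q_i = \tilde e_i$ terms and the $\mf$ terms, and verify that all remaining terms of the shape $\tfrac{1}{b_i}\mf$, $\tfrac{x_j}{b_i}\mf$ telescope to exactly $\tilde e_i-\mf$. Fourth, I would repeat for (2), which is the mirror identity and should follow by the same computation with $i$ and $i+1$ (equivalently left and right) interchanged, or by applying an anti-involution if one is available.

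The main obstacle I anticipate is precisely the sign bookkeeping: because $\tilde x_j = -x_j\mf$ rather than $x_j\mf$, the relation one wants to land on is $\tilde s_i\tilde x_i-\tilde x_{i+1}\tilde s_i=\tilde e_i-\mf$, and naively substituting turns Definition~\ref{definition of Na}(10) into something with the "wrong" sign unless one simultaneously conjugates by $Q_i$ and flips $b\leftrightarrow c$. The interplay — $Q_i^2=(b_{i+1}/b_i)\mf$ on the one hand, and the identity $c_i\,\mf s_i\mf = c_i s_i\,\mf$ and $b_{i+1}\,\mf s_i\mf = b_{i+1}s_i\,\mf$ from Lemma~\ref{Lem:ES3.13}(1a)--(1b) on the other — is exactly what makes the rational terms cancel, so I would be careful to invoke those clearing-of-denominators identities at the right moment. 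Apart from that the computation is routine, parallel to Lemmas~6.4--6.11 of \cite{ES}, and I would expect (2) to need no independent argument once (1) is done, since the relations are symmetric under the evident left-right flip of the Brauer diagram.
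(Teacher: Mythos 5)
Your proposal is correct and follows essentially the same route as the paper: expand the tilde generators, use the commutativity of $Q_i$ with the $x_j$'s and $Q_i^2=\tfrac{b_{i+1}}{b_i}\mf$, apply Definition~\ref{definition of Na}(10)--(11) inside the conjugation, and watch the rational terms collapse via $-\tfrac{b_{i+1}}{b_i}-\tfrac{x_i}{b_i}+\tfrac{x_{i+1}}{b_i}=-1$ (the paper merely packages this as the identity $\tilde s_i\tilde x_i\tilde s_i+\tilde s_i-\tilde e_i=\tilde x_{i+1}$ and then multiplies by $\tilde s_i$ to get (1) and (2)). Your sign worry resolves favorably since the two minus signs from $\tilde s_i$ and $\tilde x_j$ cancel, and Lemma~\ref{Lem:ES3.13}(1a)--(1b) turns out not to be needed because no $\mf$ ever has to be moved past an $s_i$ in this computation.
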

  \begin{proof}
 
   We have 
\begin{align*}
    \tilde s_i \tilde x_i\tilde s_i +\tilde s_i-\tilde e_i&= -\tilde s_i   x_i\tilde s_i +\tilde s_i-\tilde e_i\\
&= Q_i s_i Q_i x_i \tilde s_i + \tilde s_i -\tilde e_i -\frac{1}{b_i}\mf x_i\tilde s_i \\
&= Q_i(x_{i+1}s_i+e_i-1)Q_i\tilde s_i+ \tilde s_i -\tilde e_i -\frac{1}{b_i}\mf x_i\tilde s_i \  \ \text{ by Definition~\ref{definition of Na}(15)}  \\
&= x_{i+1} Q_i s_i Q_i \tilde s_i + \tilde e_i\tilde s_i - \frac{b_{i+1}}{b_i}\mf \tilde s_i+ \tilde s_i -\tilde e_i -\frac{1}{b_i}\mf x_i\tilde s_i\\
&= x_{i+1} Q_i s_i Q_i \tilde s_i  - \frac{b_{i+1}}{b_i}\mf \tilde s_i+ \tilde s_i  -\frac{1}{b_i}\mf x_i\tilde s_i
 \ \ \text{ by Lemma~\ref{Lem:relaforBrauer}(4)} \\
&=  -x_{i+1}\tilde s_i \tilde s_i + \frac{x_{i+1}}{b_i}\mf \tilde s_i - \frac{b_{i+1}}{b_i}\mf \tilde s_i+ \tilde s_i  -\frac{1}{b_i}\mf x_i\tilde s_i \\
&= -x_{i+1}\mf + \frac{x_{i+1}}{b_i}\mf \tilde s_i - \frac{b_{i+1}}{b_i}\mf \tilde s_i+ \tilde s_i  -\frac{1}{b_i}\mf x_i\tilde s_i\ \  \text{by  Lemma~\ref{Lem:relaforBrauer}(1)} \\
&=  -x_{i+1}\mf + \frac{x_{i+1}}{b_i}\mf \tilde s_i - \frac{x_{i+1}+u_{k+1}}{b_i}\mf \tilde s_i+ \tilde s_i  -\frac{1}{b_i}\mf x_i\tilde s_i\\
&=-x_{i+1}\mf=\tilde x_{i+1}.
\end{align*}
By Lemma~\ref{Lem:relaforBrauer}(1)(4), we immediately have  (1) and (2).  
\end{proof}

We use standard terminology on $a$-compositions,  $a$-partitions, and related concepts  from  \cite{Ma}. Let $\Lambda^+_a(r)$ denote the set of all $a$-partitions of $r$. If $\lambda$ and $\mu$ are two $a$-partitions we say that $\lambda$ is obtained from $\mu$ by
adding a box if there exists a pair $(i, s)$ such that $\lambda^{(s)}_i=\mu^{(s)}_i+1$
and $\lambda^{(t)}_
j =
\mu^{(t)}_
j$ for $(j, t) \neq (i, s)$. In this case, we  also say that $\mu$ is obtained
from $\lambda$ by removing a box, and we write $\lambda\setminus\mu =p$, where  $p=(i, \lambda^{(s)}_i, s)$.
Furthermore,  the triple $(i, \lambda^{(s)}_i, s)$ is referred to as  an addable node of $\mu$ and a
removable node of $\lambda$.

Fix an integer $f$ with $0\le f\leq \lfloor r/2\rfloor$ and let $\lambda$ be an $a$-partition of $r-2f$. An $r$-updown $\lambda$-tableau, or  simply an updown $\lambda$-tableau $\s$,
is a sequence $\s = (\s_1, \s_2, \ldots, \s_r)$  of $a$-partitions where $\s_r =\lambda$,  and
the $a$-partition $\s_i$ is obtained from $\s_{i-1}$ by either adding or removing
a box, for $i = 1, 2, \ldots, r$, where we set $\s_0$  to the empty $a$-partition. Let $\mathscr T^{ud}_{a,r}(\lambda)$  be the set of updown $\lambda$-tableaux of $r$. Note that $\lambda$ is
an $a$-partition of $r-2f$,  each element of $\mathscr T^{ud}_{a,r}(\lambda)$ 
is an $r$-tuple of
$a$-partitions, so the index $r$ is necessary for this notation.

\begin{Theorem}
\label{sioiemd}
    There exits an algebra isomorphism $ \phi: B_{k,r}(-\mathbf u_k) \rightarrow \mf B_{k+1,r}(\mathbf u_{k+1})\mf $ that maps 
     $s_i, e_i, x_j$ to $\tilde s_i, \tilde e_i, \tilde x_j$, respectively,  for all $1\le i\le r-1$ and $1\le j\le r$. 
\end{Theorem}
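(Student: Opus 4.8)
The plan is to construct the map $\phi$ on generators, verify that it respects all eighteen defining relations of Definition~\ref{definition of Na}, and then show it is a bijection. Set $\phi(s_i)=\tilde s_i$, $\phi(e_i)=\tilde e_i$, $\phi(x_j)=\tilde x_j$; since these elements all lie in $\mf B_{k+1,r}(\mathbf u_{k+1})\mf$ and $\mf$ is the identity of that algebra, $\phi$ is at least a well-defined map on generators, and we must check that the relations (1)--(18) of $B_{k,r}(-\mathbf u_k)$ are sent to identities in $\mf B_{k+1,r}(\mathbf u_{k+1})\mf$. The bulk of this is already done: relations (1)--(3),(6),(7),(12),(13),(14),(15),(16) (the purely Brauer-type relations among $\tilde s_i,\tilde e_i$) are exactly Lemma~\ref{Lem:relaforBrauer}; relations (10),(11) are Lemma~\ref{Lem:rel1516}; relations (4),(8),(9),(17),(18) are Lemma~\ref{Lem:12-14} (note $\phi$ sends $(x_1-u_j)$ of $B_{k,r}(-\mathbf u_k)$ to $(-x_1-u_j)\mf=-(x_1+u_j)\mf=-(\tilde x_1+u_j)$, matching Lemma~\ref{Lem:12-14}(4)); and relation (5), $e_1x_1^ke_1=\omega_k e_1$, is precisely Lemma~\ref{lem:rela5}, using that the parameter family $\omega$ attached to $B_{k,r}(-\mathbf u_k)$ equals $W_{k,1}(u)$ via $\widehat W_1(u)$ as observed after \eqref{hatW}. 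So $\phi$ extends to an algebra homomorphism $B_{k,r}(-\mathbf u_k)\to \mf B_{k+1,r}(\mathbf u_{k+1})\mf$.

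Next I would establish surjectivity. The image of $\phi$ contains $\tilde x_j=-x_j\mf$ for all $j$, hence contains $x_j\mf$ and all of the commutative subalgebra $\mf\mathbb C[x_1,\dots,x_r]\mf$; in particular it contains each idempotent $\gamma_i$ and $\mf$ itself, and also $\tfrac1{b_i}\mf$ and $Q_i=\sqrt{b_{i+1}/b_i}\,\mf$. From $\tilde e_i=Q_ie_iQ_i$ we recover $\mf e_i\mf=Q_i^{-1}\tilde e_iQ_i^{-1}$ (the inverses taken within the commutative subalgebra acting invertibly on $\mf$-truncated modules, cf.\ the remark after Definition~\ref{def-of-tile}), hence $\mf e_i\mf\in\operatorname{im}\phi$; similarly from $\tilde s_i=-Q_is_iQ_i+\tfrac1{b_i}\mf$ we get $Q_is_iQ_i\in\operatorname{im}\phi$ and therefore $\mf s_i\mf\in\operatorname{im}\phi$. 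Since $B_{k+1,r}(\mathbf u_{k+1})$ is generated by $s_i,e_i,x_j$, the truncation $\mf B_{k+1,r}(\mathbf u_{k+1})\mf$ is spanned by products $\mf y_1y_2\cdots y_m\mf$ with each $y_t\in\{s_i,e_i,x_j\}$, and inserting $\mf=\mf^2$ between consecutive factors (valid because $\mf$ is central in the commutative subalgebra and commutes past the relevant elements after using Lemma~\ref{Lem:ES3.13} to move $\mf$'s around) expresses such a product in terms of the generators $\mf x_j\mf,\ \mf s_i\mf,\ \mf e_i\mf$ of the image. Hence $\phi$ is surjective.

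For injectivity I would use a dimension (or rank) count. By \cite[Theorem~5.5]{AMR}, $\dim_{\mathbb C} B_{k,r}(-\mathbf u_k)=k^r(2r-1)!!$ under the admissibility hypothesis, which we are assuming throughout. On the other hand $\dim_{\mathbb C}\mf B_{k+1,r}(\mathbf u_{k+1})\mf=\operatorname{rank}$ of the idempotent $\mf$ acting on the regular module, which can be computed from the Jucys--Murphy/seminormal basis of $B_{k+1,r}(\mathbf u_{k+1})$ \cite[Proposition~5.8]{RSi}: the basis elements are indexed by pairs of updown tableaux with entries (residues) in $I_{\mathbf u_{k+1}}=\{\pm u_j+\mathbb Z\mid 1\le j\le k+1\}$, and $\mf$ kills precisely those whose residue sequence meets the $u_{k+1}$-strand. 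A combinatorial count of the surviving pairs of updown $\lambda$-tableaux (those using only the $k$ strands $\pm u_1,\dots,\pm u_k$, equivalently $k$-partitions), summed over $f$ and $\lambda\in\Lambda^+_k(r-2f)$, yields exactly $k^r(2r-1)!!$ — this is why the updown-tableau terminology was just recalled. Since $\phi$ is a surjection between two $\mathbb C$-algebras of the same finite dimension, it is an isomorphism. The main obstacle is this last dimension computation: it requires knowing the precise eigenvalue behaviour of $\mf$ on the seminormal basis and a clean bijection between the surviving index set and $k$-tuples matching the $k^r(2r-1)!!$ count; alternatively one can argue injectivity by exhibiting an explicit inverse on the generators $\mf s_i\mf,\mf e_i\mf,\mf x_j\mf$ and checking it is well-defined, but the identity-element subtlety for $\mf$ versus $1$ must be handled carefully there too.
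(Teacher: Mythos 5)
Your proposal is correct and follows essentially the same route as the paper: the eighteen relations are verified by exactly the lemmas you cite, surjectivity is the Ehrig--Stroppel-style argument (the paper simply defers to \cite[Theorem 4.3]{ES} with Lemma~\ref{Lem:ES3.13} replacing their Proposition~3.13), and injectivity is the same dimension count via the Jucys--Murphy basis of \cite[Proposition 5.8]{RSi}, using that $\mf m_{\s,\t}\mf$ equals $m_{\s,\t}$ or $0$ according to whether $\s,\t$ avoid the $u_{k+1}$-component. (One cosmetic point: your parenthetical on relation (18) has a sign slip --- the cyclotomic relation of $B_{k,r}(-\mathbf u_k)$ is $(x_1+u_1)\cdots(x_1+u_k)=0$ and its image under $\phi$ is precisely the expression handled by Lemma~\ref{Lem:12-14}(4) --- but this does not affect the argument.)
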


\begin{proof}
We first demonstrate that  $\phi$  is a well-defined algebra homomorphism. The relation (5) in Definition \ref{definition of Na} is satisfied by Lemma \ref{lem:rela5}. Similarly, the relations (8)-(9), (17), and the cyclotomic relation  $ (x_1+u_1)(x_1+u_2)\ldots (x_1+u_{k})=0$ in $B_{k,r}(-\mathbf u_k)$ hold  by Lemma \ref{Lem:12-14}.
The relations (10) and (11) are verified using Lemma \ref{Lem:rel1516}. Lastly, the remaining relations for Brauer algebra are established by Lemma \ref{Lem:relaforBrauer}. This concludes the proof that  $\phi $  is well-defined.
   
    Next, we observe that  $\phi$  is surjective. This follows from arguments similar to the case  $k = 1 $ in the proof of \cite[Theorem 4.3]{ES}, with Lemma \ref{Lem:ES3.13} replacing \cite[Proposition 3.13]{ES}.
    
Finally, to prove the injectivity of    $\phi$,  it suffices to demonstrate  that the dimension of $ B_{k,r}(-\mathbf{u}_k)$ coincides with  the dimension of   $\text{im} \phi$. This can be verified by analyzing the generalized eigenvalues of the Jucys-Murphy basis of  $B_{k+1,r}(\mathbf{u}_{k+1})$.  See \cite[\S5]{RSi}, where the Jucys-Murphy basis for any cell module for 
$B_{k+1,r}(\mathbf{u}_{k+1})$ 
is established. 
Specifically, $B_{k+1,r}(\mathbf u_{k+1})$ has  the Jucys-Murphy basis 
\[\{m_{\s,\t} \mid \s,\t \in \mathscr {T}^{up}_{k+1,r}(\lambda), (f,\lambda)\in \Lambda_{k+1,r}\}  \]
 given in \cite[Proposition 5.8]{RSi}. We can embed   $\mathscr {T}^{up}_{k,r}(\lambda) $  into 
$\mathscr {T}^{up}_{k+1,r}(\lambda) $ via the identification sending   $\lambda\in \Lambda_{k,r}$ to  $(\lambda, \emptyset)\in \Lambda_{k+1,r}$. 
By the definition of $\mf$ and  \cite[Theorem 5.12]{RSi}, we have 
$$ \mf m_{\s,\t}\mf =\begin{cases}
     m_{\s,\t} & \text{if  $\s,\t\in \mathscr {T}^{up}_{k,r}(\lambda)$,}\\
    0  & \text{otherwise.}
\end{cases} $$
This implies 
$$\dim_\mathbb C \mf{B}_{k+1, r}(\mathbf{u}_{k+1}) \mf =\sum_{\lambda\in  {\Lambda}_{k, r}} |\mathscr {T}^{up}_{k,r}(\lambda)|^2,$$   
which matches the dimension  of  $B_{k,r}(-\mathbf{u}_k)$, completing the proof.
\end{proof}

``\textbf{Proof of Theorem~A:} "
    Under the  assumption, we iteratively  apply Theorem \ref{sioiemd}   $k$ times to obtain the result. More specifically, if  we denote $\mathbf f$ in Theorem~\ref{sioiemd} by $\mathbf f^{k+1}$, then the required idempotent $e$ is $\mathbf f^{k+1} \mathbf f^{k+2}\cdots\mathbf f^{2k}$.  Moreover, we have $e=\gamma_1'\ldots \gamma_r'$, where each $\gamma_i'$ is the idempotent projecting  onto the generalized eigenspace corresponding  to $x_i$, $1\le i\le r$,  with  the associated  eigenvalues disjoint from $u_{j}$ for all $j$,  $k+1\le j\le 2k$.  
\qed

\section{  Endomorphism algebra in parabolic category $\mathcal O^\frp$ of type $ D_n$}

This section introduces the parabolic category $\mathcal O^\frp $ associated with a parabolic subalgebra $\mathfrak p$ of the orthogonal Lie algebra $\mathfrak{so}_{2n}$. The primary object is  to 
realize $B_{2k,r}(\mathbf u_{2k})$ as an endomorphism algebra within the parabolic category $\mathcal O^\frp$ of type $ D_n$, for any chosen parameters $(-1)^k\mathbf u_k$. This realization  establishes  a  connection between the cyclotomic Brauer algebra $B_{k, r}((-1)^k \mathbf u_k)$ and  the  parabolic category $\mathcal O^\frp$ in type $D_n$. 
 
\subsection{The Lie algebra of type $D_n$} 
Throughout,  let $V$ be the $2n$-dimensional  vector space over $\mathbb C$  with basis 
\begin{equation}\label{basis} \{v_i\mid i\in [n]\},\end{equation}  where $[n]:=\{1,2,\ldots, n\}\cup\{-1, -2, \ldots, -n\}$.
      Let $ (\ ,\ ): V\otimes V\rightarrow \mathbb C$ denote  the non-degenerate symmetric bilinear form such that 
      $(v_i, v_j)=\delta_{i, -j}$.
  The Lie algebra  $\mathfrak g$ of type $D_n$ is   
  \begin{equation}\label{defofg}
\mathfrak {so}_{2n}=\{g\in \End(V) \mid (gx,y)+(x,gy)=0 \text{ for all $x,y\in V$}\}.
\end{equation}
For all $i, j\in  [n]$, let $E_{i, j}$ be the usual matrix unit with respect to the basis of $V$ in \eqref{basis}. Then   $\mathfrak{so}_{2n}$ has basis \begin{equation}\label{sobas} \{F_{i, i}\mid 1\le i\le n\}\cup \{F_{\pm i, \pm j}\mid 1\le i<j\le n\}, \end{equation} 
where
$ F_{i,j}=E_{i,j}- E_{-j,-i}$ (see \cite[(7.9)]{Mol}). There is a standard triangular decomposition  \begin{equation} \label{trian}\mathfrak g=\mathfrak n^{-}\oplus \mathfrak h\oplus \mathfrak n^+,\end{equation} where  $\mathfrak h:=   \bigoplus_{i=1}^n \mathbb C h_i$ is the standard  Cartan subalgebra  with   $h_i=F_{i,i}$,  $1\leq i\leq n$, 
 and  $\mathfrak n^+$ has   basis $
\{F_{ i,\pm j}\mid 1\le i<j\le n\}$. 
Let $\mathfrak h^*$ be the linear dual  of $\mathfrak h$ with the dual basis $\{\epsilon_i\mid 1\le i\le n\}$ such that  $\epsilon_i(h_j)=\delta_{i, j}$  for all  $1\le i, j\le n$.
Let $\Phi$ denote the root system associated with  \eqref{trian}. 
Then $\Phi=\Phi^+\cup -\Phi^+$, where  the set of positive roots is 
\begin{equation}\label{posro} \Phi^+=\{\epsilon_i\pm \epsilon_j\mid 1\le i<j\le n\}.\end{equation}  The set of 
simple roots is $\Pi=\{\alpha_1, \alpha_2, \ldots, \alpha_{n-1}, \alpha_n\}$, where 
 \begin{equation}\label{aln} \alpha_i=\epsilon_i-\epsilon_{i+1},  1\le i\le n-1, \ \text{and} \ \alpha_n=
\epsilon_{n-1}+\epsilon_n   . 
\end{equation}

\subsection{Parabolic category $\mathcal O^\frp$}
Throughout, we fix the subset $I$ of $\Pi$ as in \eqref{lever}.
It is the $I_1$ for $\mathfrak{so}_{2n}$ in \cite{GRX, RS-cyc}.
\begin{Defn}\label{defofpi1}
Fix positive integers $q_1,q_2,\ldots,q_k$ such that $q_t=p_t-p_{t-1}$, $1\le t\le k$, and define 
 $\mathbf p_j=\{p_{j-1}+1, p_{j-1}+2,\ldots, p_j\}$, for all $1\le j\le k$.
\end{Defn}

There is a root system $\Phi_{I}= \Phi\cap \mathbb Z I$ with  $\Phi_{I}^+=\Phi^+\cap \mathbb Z I$ as the set of positive roots. Let $\mathfrak p$ be the standard parabolic subalgebra of $\mathfrak {so}_{2n}$  with respect to  $I$ defined as in~\eqref{lever}. Then  $\mathfrak p=\mathfrak u\oplus \mathfrak l$, where 
$\mathfrak u$ is the nilradical of $\mathfrak p$, and $\mathfrak l$ is the Levi subalgebra. Let $W_I$ be the Weyl group associated with $\mathfrak l$.

Recall that the BGG  category  $\mathcal O$ is the category of finitely generated $\mathfrak {so}_{2n}$-modules which are locally finite over $\mathfrak{n}^+$ and semi-simple over $\mathfrak h$.
Let $\mathcal{O}^{\mathfrak p}$ be  the full subcategory of $\mathcal O$  consisting of all $\mathfrak{so}_{2n}$-modules which are locally   $\mathfrak p$-finite. 

For any  $\lambda\in\Lambda^\mathfrak p$, where $\Lambda^\frp$ is defined as in \eqref{pdom}, there exists a unique irreducible $\mathfrak l$-module $L_I( \lambda)$, which can be viewed as a $\mathfrak p$-module by defining the action of $\mathfrak{u}$ to be trivial. 
The corresponding   \textit{parabolic Verma module} is  
$$
M^\mathfrak p(\lambda)=\mathbf U(\mathfrak g)\otimes_{\mathbf U(\mathfrak p)} L_I( \lambda).$$

\subsection{Certain simple parabolic Verma modules}
In this subsection, we revisit  a simplicity criterion for parabolic Verma modules as presented in  \cite{XZ},  which serves as  a refinement of Jantzen's criterion  as given in    \cite[Theorem 9.13]{Hum}. This refinement enables us to identify a suitable tilting module in $\mathcal O^\frp $, such that the corresponding endomorphism algebra is isomorphic to $B_{2k, r}(\mathbf u_{2k})$.

For any $\lambda\in \Lambda^\mathfrak p$, define 
\begin{equation}\label{psipl} \begin{aligned}
\Psi^+_\lambda&= \{ \beta\in \Phi^+\setminus\Phi_I\mid \langle \lambda+\rho, \beta^\vee\rangle \in \Z_{>0} \},\\
\Psi^{++}_\lambda &= \{\beta\in \Psi^+_\lambda \mid \langle s_\beta(\lambda+\rho), \alpha^\vee \rangle\neq 0 \text{ for all } \alpha\in \Phi_I 
\},\\
\end{aligned}
\end{equation}
where   
\begin{equation}
\label{equ:defofrho}
  \rho=\sum_{i=1}^n (n-i)\varepsilon_i
\end{equation}  is half the  sum of all  positive roots, and 
$s_\beta$ is the reflection corresponding to $\beta$.

A weight $\mu$ is called \textsf{ $\Phi_I$-regular} if 
$\langle \mu, \alpha^\vee\rangle \neq 0$ for all $\alpha\in \Phi_I$. Therefore, $\Psi^{++}_\lambda$ consists of all elements $\beta\in \Psi_\lambda^+$ such that  $s_\beta(\lambda+\rho)$  are  $\Phi_I$-regular. 
The following simplicity criterion is a special case of \cite[Corollary 5.15]{XZ}.
\begin{Prop}
\label{Pro:simcri}\cite[Corollary 5.15]{XZ}   For any $\lambda\in \Lambda^\mathfrak p$, $M^\mathfrak p(\lambda)$ is simple 
   if and only if $ \Psi^{++}_\lambda$
   contains only roots of form
   $\epsilon_i+\epsilon_j$ for $i, j\in \mathbf p_s$ with $i<j$, where  $\mathbf p_s$ is   defined as  in Definition~\ref{defofpi1}.   
 Moreover, $(\lambda+\rho)_j= (\lambda+\rho)_n=0$
   and $(\lambda+\rho)_l\neq -(\lambda+\rho)_i$ for all $l\in \mathbf p_s$. In particular, $M^\mathfrak p(\lambda)$ is simple, and hence is also tilting if  $\Psi^{++}_\lambda=\emptyset$.    
\end{Prop}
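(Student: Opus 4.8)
The plan is to derive Proposition~\ref{Pro:simcri} from \cite[Corollary~5.15]{XZ} --- a refinement of Jantzen's simplicity criterion \cite[Theorem~9.13]{Hum} for parabolic Verma modules --- by specializing its root-theoretic condition to the pair $(\mathfrak{so}_{2n},\mathfrak p)$, where $\mathfrak p$ has Levi $\mathfrak l\cong\mathfrak{gl}_{q_1}\oplus\cdots\oplus\mathfrak{gl}_{q_k}$ and $\Phi_I^+=\bigsqcup_{s=1}^{k}\{\epsilon_i-\epsilon_j\mid i<j,\ i,j\in\mathbf p_s\}$ is of type $A$. The substance of the argument is translating the general criterion into the explicit combinatorics of type $D_n$ with a parabolic of type $A$.

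First I would make the combinatorics of $\Phi^+\setminus\Phi_I$ explicit: by \eqref{posro} together with the type-$A$ shape of $\Phi_I^+$, it consists of the cross-block difference roots $\epsilon_i-\epsilon_j$ (with $i\in\mathbf p_s$, $j\in\mathbf p_t$, $s<t$) and all sum roots $\epsilon_i+\epsilon_j$ with $i<j$ (no sum root can lie in a type-$A$ root subsystem). Feeding this into \cite[Corollary~5.15]{XZ}, simplicity of $M^{\mathfrak p}(\lambda)$ is governed by which of these roots lie in $\Psi^{++}_\lambda$, i.e. satisfy both $\langle\lambda+\rho,\beta^\vee\rangle\in\mathbb Z_{>0}$ and the $\Phi_I$-regularity of $s_\beta(\lambda+\rho)$, where $\langle\lambda+\rho,\beta^\vee\rangle$ is read off from \eqref{equ:defofrho}. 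A case analysis then shows: a root $\epsilon_i-\epsilon_j$, or a sum root $\epsilon_i+\epsilon_j$ with $i,j$ in distinct blocks, lying in $\Psi^{++}_\lambda$ forces a nonzero term with no cancellation in the relevant Jantzen sum, hence a proper submodule of $M^{\mathfrak p}(\lambda)$; while a sum root $\epsilon_i+\epsilon_j$ with $i,j$ in the same block $\mathbf p_s$ imposes no such obstruction. This yields the stated equivalence.

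For the ``moreover'' clause I would extract the coordinate conditions from the criterion in the surviving case: if $\epsilon_i+\epsilon_j\in\Psi^{++}_\lambda$ with $i<j$ and $i,j\in\mathbf p_s$, then the $\Phi_I$-regularity of $s_{\epsilon_i+\epsilon_j}(\lambda+\rho)$ --- which reflects the $i$-th and $j$-th coordinates up to sign --- together with the fact that $\lambda+\rho$ is strictly decreasing on each block (since $\lambda$ is $\Phi_I$-dominant and $\rho_i=n-i$) forces $(\lambda+\rho)_l\neq-(\lambda+\rho)_i$ for all $l\in\mathbf p_s$, and a closer inspection of \cite[Corollary~5.15]{XZ} for a same-block sum root pins down the equalities $(\lambda+\rho)_j=(\lambda+\rho)_n=0$, the index $n$ entering because $\alpha_n=\epsilon_{n-1}+\epsilon_n\notin I$ places $n$ in the last block. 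Finally, if $\Psi^{++}_\lambda=\emptyset$ the equivalence gives simplicity of $M^{\mathfrak p}(\lambda)$ vacuously; a simple parabolic Verma module is $\cong L(\lambda)$, and since the contravariant duality on $\mathcal O^{\mathfrak p}$ fixes $L(\lambda)$ and carries $M^{\mathfrak p}(\lambda)$ to the dual parabolic Verma module, $L(\lambda)$ admits both a parabolic Verma flag and a dual parabolic Verma flag (each of length one), hence is tilting.

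I expect the main obstacle to be the bookkeeping that separates $\Psi^+_\lambda$ from $\Psi^{++}_\lambda$ --- checking that every root outside the permitted family $\{\epsilon_i+\epsilon_j\mid i<j,\ i,j\in\mathbf p_s\}$ genuinely contributes a nonzero singular vector rather than being annihilated by the parabolic condition --- together with reconciling the indexing and normalization conventions of \cite[Corollary~5.15]{XZ} with those fixed in \eqref{aln} and \eqref{equ:defofrho}; once that dictionary is set up, the specialization is routine.
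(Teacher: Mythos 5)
Your proposal takes essentially the same route as the paper: the paper also treats the simplicity criterion itself as a direct specialization of \cite[Corollary~5.15]{XZ} (it does not re-derive the case analysis), and its only actual argument is for the final ``simple implies tilting'' claim, which it settles by citing \cite[\S 11.8]{Hum}. The duality argument you give for that step --- $L(\lambda)$ is fixed by the contravariant duality, so a simple parabolic Verma module carries both a parabolic Verma flag and a dual one --- is precisely the content of that reference, so there is no gap.
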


\begin{proof}
    It follows from \cite[\S~11.8]{Hum} that $M^\mathfrak p(\lambda)$ is tilting if and only if it is simple. Therefore, the last statement follows from the first one,  which was established  in \cite[Corollary 5.15]{XZ}. \end{proof}

 For any $\mathbf c=(c_1, c_2, \ldots, c_k)\in  \mathbb C^k$, let
$\lambda_\mathbf c$ be defined as in \eqref{deltac}.  
From this point on,  we may identify $\lambda=\sum_{i=1}^n\lambda_i\epsilon_i$ with $(\lambda_1,\ldots, \lambda_n)$. 
Similarly, for $\lambda_\mathbf c+\rho$ we may identify it with $(\mathbf d_1, \ldots, \mathbf d_k)$ by \eqref{equ:defofrho}, where 
\begin{equation}\label{dt}  (\mathbf d_s)=
(  u_s-\frac{1}{2},   u_s-\frac{3}{2}, \ldots,   u_s+\frac{1}{2}-q_s ), \text{ for } 1\le s\le k.\end{equation} 
 We call $\mathbf d_s$ the $s$-th part of $\lambda_\mathbf c+\rho$. 
 Thus,   $u_1, u_2, \ldots, u_k$ satisfy \eqref{cjuj}.

\begin{Lemma}\label{posi11} For any $\mathbf u_k\in \mathbb C^k$, there exists  a
 $\mathbf c=(c_1, c_2, \ldots, c_k)\in \mathbb C^k$ along with      positive integers $q_1,q_2, \ldots, q_k$ satisfying   $\min\{q_1, q_2, \ldots, q_k\}\gg 0$, such that  for all   
$\beta=\epsilon_i+\epsilon_j\in  \Psi_{\lambda_\mathbf c}^+$, we have $\beta\notin \Psi^{++}_{\lambda_{\mathbf c}}$ 
\end{Lemma}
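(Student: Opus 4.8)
The plan is to analyze the structure of $\Psi_{\lambda_{\mathbf c}}^+$ explicitly using the coordinate description \eqref{dt} of $\lambda_{\mathbf c}+\rho$, and then show that by choosing the $q_s$ large the only roots $\beta=\epsilon_i+\epsilon_j$ that could lie in $\Psi_{\lambda_{\mathbf c}}^+$ are ruled out of $\Psi^{++}_{\lambda_{\mathbf c}}$ because $s_\beta(\lambda_{\mathbf c}+\rho)$ fails to be $\Phi_I$-regular. First I would fix notation: write $\lambda_{\mathbf c}+\rho=(\mathbf d_1,\ldots,\mathbf d_k)$ where $\mathbf d_s=(u_s-\tfrac12,u_s-\tfrac32,\ldots,u_s+\tfrac12-q_s)$ occupies the coordinate block $\mathbf p_s=\{p_{s-1}+1,\ldots,p_s\}$. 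So the $l$-th coordinate of $\lambda_{\mathbf c}+\rho$, for $l\in\mathbf p_s$, is $(\lambda_{\mathbf c}+\rho)_l=u_s-\tfrac12-(l-p_{s-1}-1)$; in particular within a single block the coordinates form a strictly decreasing arithmetic progression with common difference $1$.

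Next I would record when $\beta=\epsilon_i+\epsilon_j$ (with $i<j$) lies in $\Psi_{\lambda_{\mathbf c}}^+$, i.e. when $\langle\lambda_{\mathbf c}+\rho,\beta^\vee\rangle=(\lambda_{\mathbf c}+\rho)_i+(\lambda_{\mathbf c}+\rho)_j\in\Z_{>0}$; note also that $\beta\notin\Phi_I$ is automatic for a root of the form $\epsilon_i+\epsilon_j$ since $\Phi_I$ is of type $A$ and contains only roots $\epsilon_a-\epsilon_b$. Then I compute $s_\beta(\lambda_{\mathbf c}+\rho)$: reflection in $\beta=\epsilon_i+\epsilon_j$ sends coordinate $i\mapsto -(\lambda_{\mathbf c}+\rho)_j$ and coordinate $j\mapsto -(\lambda_{\mathbf c}+\rho)_i$, fixing all others. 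To test $\Phi_I$-regularity I must check whether $s_\beta(\lambda_{\mathbf c}+\rho)$ has two equal coordinates inside some common block $\mathbf p_t$ (that is exactly the condition $\langle s_\beta(\lambda_{\mathbf c}+\rho),\alpha^\vee\rangle=0$ for some $\alpha=\epsilon_a-\epsilon_b\in\Phi_I$).

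The key combinatorial point: say $i\in\mathbf p_s$ and $j\in\mathbf p_t$. After reflection, the new value in slot $i$ is $-(\lambda_{\mathbf c}+\rho)_j$, and I want to find another slot $i'\in\mathbf p_s$, $i'\ne i$, with the same value, i.e. with $(\lambda_{\mathbf c}+\rho)_{i'}=-(\lambda_{\mathbf c}+\rho)_j$. Since within block $\mathbf p_s$ the coordinates run through the consecutive values $u_s-\tfrac12,\,u_s-\tfrac32,\,\ldots,\,u_s+\tfrac12-q_s$, the value $-(\lambda_{\mathbf c}+\rho)_j$ occurs among them precisely when $-(\lambda_{\mathbf c}+\rho)_j$ lies in this length-$q_s$ arithmetic progression and is distinct from $(\lambda_{\mathbf c}+\rho)_i$. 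The condition $(\lambda_{\mathbf c}+\rho)_i+(\lambda_{\mathbf c}+\rho)_j\in\Z_{>0}$ forces $-(\lambda_{\mathbf c}+\rho)_j=(\lambda_{\mathbf c}+\rho)_i-m$ for some positive integer $m$, and since $(\lambda_{\mathbf c}+\rho)_i$ already sits in the block and the block's values step down by $1$, the value $(\lambda_{\mathbf c}+\rho)_i-m$ is again in the block as long as it hasn't fallen below the bottom $u_s+\tfrac12-q_s$; this is guaranteed once $q_s$ is large relative to $m$. The uniform bound needed is $q_s$ larger than any such $m$ arising, and since the $u_i$ are fixed, $m$ is bounded by a constant depending only on the $u_i$ and the other $q_t$'s — this is the only delicate point, and I would handle the potential circularity by first fixing one huge common value $N=\min q_s$ and observing that the relevant $m$ for a root crossing blocks $s$ and $t$ is bounded by roughly $|u_s-u_t|+q_t$ (when $s\ne t$) or is $\le q_s$ automatically (when $s=t$, where $i'$ can always be chosen inside the block since $-(\lambda_{\mathbf c}+\rho)_j$ with $j$ below $i$ in the same block lands strictly above $(\lambda_{\mathbf c}+\rho)_i$... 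I must be careful here and instead pick the block with the larger $q$). The cleanest route: first choose $q_1$; then choose $q_2\gg q_1$; inductively $q_{s}\gg q_{s-1}$; then for any candidate $\beta=\epsilon_i+\epsilon_j$ with $i\in\mathbf p_s$, $j\in\mathbf p_t$, $s\le t$, I perform the reflection and find the repeated coordinate inside the larger block $\mathbf p_t$ (for slot $j$, whose new value $-(\lambda_{\mathbf c}+\rho)_i$ differs from $(\lambda_{\mathbf c}+\rho)_j$ by a bounded amount controlled by $q_s$ and the $u$'s, hence lands inside $\mathbf p_t$ once $q_t$ is large). That produces $\alpha\in\Phi_I$ with $\langle s_\beta(\lambda_{\mathbf c}+\rho),\alpha^\vee\rangle=0$, so $\beta\notin\Psi^{++}_{\lambda_{\mathbf c}}$, which is the claim.

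The main obstacle I anticipate is exactly this bookkeeping of which block to locate the repeated coordinate in, and making the "$\gg 0$" choices non-circular: I need the bound on the reflection shift to depend only on the already-fixed data (the $u_i$ and earlier $q$'s) and not on the $q$ whose largeness I'm about to invoke. Organizing the $q_s$ in increasing order of magnitude and always routing the argument through the coordinate sitting in the \emph{larger} block resolves this; the remaining work is the routine verification that $-(\lambda_{\mathbf c}+\rho)_i$, for $i$ in the smaller block, differs from $(\lambda_{\mathbf c}+\rho)_j$ by an integer whose absolute value is at most (a constant depending on the $u$'s) $+\,q_s$, hence strictly less than $q_t$, so it lands strictly inside the value range of block $\mathbf p_t$ and is distinct from $(\lambda_{\mathbf c}+\rho)_j$ itself (distinctness because the shift is a \emph{positive} integer by the $\Z_{>0}$ condition). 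Combining with Proposition~\ref{Pro:simcri} then gives that, with these $q_s$, no $\beta=\epsilon_i+\epsilon_j\in\Psi^+_{\lambda_{\mathbf c}}$ survives into $\Psi^{++}_{\lambda_{\mathbf c}}$, proving the lemma.
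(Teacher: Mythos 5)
Your proposal is correct and follows essentially the same strategy as the paper's proof: write $\lambda_{\mathbf c}+\rho$ blockwise as consecutive arithmetic progressions, observe that the condition $\langle\lambda_{\mathbf c}+\rho,\beta^\vee\rangle\in\Z_{>0}$ forces the reflected coordinate to be an integer number of steps further down an existing block, and conclude that for $q_s\gg 0$ the reflected weight has a repeated coordinate inside a block, hence is not $\Phi_I$-regular. The only cosmetic differences are that the paper routes the repeated coordinate through the slot whose original entry is positive (rather than through the block of larger index), and that your cautious inductive ordering of the $q_s$ is not actually needed here, since the positivity condition already bounds $i'$ and $j'$ by $u_s+u_t$, so a single uniform lower bound on all the $q$'s suffices.
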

\begin{proof} For any fixed $\mathbf u_k $, we can pick $\mathbf c$ along with positive integers $q_1, q_2, \ldots, q_k$ such that   $\min\{q_1, q_2, \ldots, q_k\}\gg 0$, and 
\eqref{cjuj} holds.

We assume $i<j$  without loss of generality, where $\beta=\epsilon_i+\epsilon_j$. Then, either $i, j\in \mathbf p_{t}$  or $i\in \mathbf p_t$  and $j\in \mathbf p_s $  for some $1\le t<s\le k$, where $\mathbf p_t$ is defined as in Definition~\ref{defofpi1}. We define \begin{equation} \label{iprime} i'=i-p_{t-1} \text{ if $i\in \mathbf p_t$. }\end{equation}
Suppose  $i, j\in \mathbf p_t$. If $\Psi_{\lambda_{\mathbf c}}^+=\emptyset$, there is nothing to be proved. Otherwise, 
we have 
\begin{equation}\label{ine12}  \langle\lambda_\mathbf c+\rho, \beta^\vee \rangle = (  u_t-i'+\frac{1}{2})+(  u_t-j'+\frac{1}{2}) \in \mathbb Z_{>0}. \end{equation}
This implies $  u_t-i'+\frac{1}{2}>0$ and $2u_t\in \mathbb Z$.
Since  $q_i\gg0$ for all $1\le i\le k$,  it follows that $q_t>2u_t-i'+1$, and hence 
$p_t>l$, where $l':=2  u_t-i'+1$.  Moreover, by \eqref{ine12} we have  $j'<l'$.  This ensures  that $l\in \mathbf p_t$. 
Thus, 
$$\langle s_\beta(\lambda_\mathbf c+\rho),\epsilon_j-\epsilon_l\rangle=
-(  u_t-i'+\frac{1}{2} +   u_t-l'+\frac{1}{2})=-(2  u_t-i'+1-l')=0, $$
    forcing  $\beta\notin \Psi^{++}_{\lambda_{\mathbf c}}$.

Suppose $i\in \mathbf p_t$  and $j\in \mathbf p_s $  such that  $1\le t<s\le k$. 
For any 
$\beta\in \Psi_{\lambda_\mathbf c}^+$, we have 
\begin{equation}
\label{Equ:betacse2}
    \langle\lambda_\mathbf c+\rho, \beta^\vee\rangle = (  u_t-i'+\frac{1}{2})+(  u_s-j'+\frac{1}{2})=  u_t+  u_s-i'-j'+1 \in \mathbb Z_{>0}.
\end{equation}  
   This implies $  u_t+  u_s\in \mathbb Z$, 
   and  that either $  u_t-i'+\frac{1}{2}>0$ or  $  u_s-j'+\frac{1}{2}>0$.
   Without loss of  generality, we assume 
   $u_t-i'+\frac{1}{2}>0$.
   
   Next,  we compute 
   $s_\beta(\lambda_\mathbf c+\rho)$, which is obtained from $\lambda_\mathbf c+\rho$
   by replacing $(\mathbf d_t)$ and $(\mathbf d_s)$ with $\mathbf i$ and $\mathbf j$, respectively, where
   $$ \begin{aligned} \mathbf i&=(  u_t-\frac{1}{2}, \ldots,   u_t-i'+\frac{3}{2}, { -(  u_s+\frac{1}{2}-j')}, \ldots,   u_t+\frac{1}{2}-q_t), \\
    \mathbf j&= (  u_s-\frac{1}{2}, \ldots,   u_s-j'+\frac{3}{2}, {-(  u_t+\frac{1}{2}-i')}, \ldots,   u_s+\frac{1}{2}-q_s). \\
    \end{aligned}$$
   Using \eqref{Equ:betacse2}, and noting that $q_t\gg 0$, we derive:  
    \[\begin{aligned}  (  u_t+\frac{1}{2}-i'-1) -({-(  u_s+\frac{1}{2}-j' )} ) &=  u_t+  u_s-i'-j'\ge 0,\\
       (  u_t+\frac{1}{2}-i'-1)-(  u_t+\frac{1}{2}-q_t)&=q_t-i'-1>   u_t+  u_s
+1-i'-j'>0,  \\
-(  u_s+\frac{1}{2}-j' ) -(  u_t+\frac 12-q_t)& =q_t-(  u_t+  u_s+1-j')>0.
\\
\end{aligned}  \]
Since $  u_s+  u_t\in \mathbb Z$,  the above inequalities implies that  $-(  u_s+\frac12-j')=  u_t-l'+\frac 12$ for some $i+1\le l<p_t$. Thus, $\epsilon_{i}-\epsilon_l\in \Phi_I$ and $\langle s_\beta(\lambda_\mathbf c+\rho),\epsilon_{i}-\epsilon_l\rangle=0$. This forces
 $\beta\notin \Psi^{++}_{\lambda_{\mathbf c}}$.
\end{proof}
It follows from the proof of Lemma~\ref{posi11} that $\beta\not\in \Psi_{\lambda_\mathbf c}^{++}$ if  $\min\{q_1, q_2, \ldots, q_k\}\gg 0$, and $\beta=\epsilon_i+\epsilon_j\in \Psi^+_{\lambda_{\mathbf c}}$. However, we require additional restrictions on $q_1, q_2, \ldots, q_k$ when we prove the following result. Notably,  the condition for $q_1, q_2, \ldots, q_k$  in the result below is compatible with that in  Lemma~\ref{posi11}.

\begin{Lemma}
\label{pos12} For any  $\mathbf u_k\in \mathbb C^k$, there exists a 
$\mathbf c:=(c_1, c_2, \ldots, c_k)\in \mathbb C^k$  along with  associated positive integers $q_1, q_2, \ldots, q_k$ satisfying   $\min\{q_1, q_2, \ldots, q_k\}\gg 0$,   such that for all   
$\beta=\epsilon_i-\epsilon_j\in  \Psi_{\lambda_\mathbf c}^+$, we have $\beta\notin \Psi^{++}_{\lambda_{\mathbf c}}$.  
\end{Lemma}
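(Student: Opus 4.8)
The plan is to mirror the structure of the proof of Lemma~\ref{posi11}, but now for roots of the form $\beta = \epsilon_i - \epsilon_j$. First I would reduce to the genuinely nontrivial situation: if $\Psi^+_{\lambda_\mathbf c} = \emptyset$ there is nothing to prove, so assume $\beta = \epsilon_i - \epsilon_j \in \Phi^+ \setminus \Phi_I$ with $i < j$ and $\langle \lambda_\mathbf c + \rho, \beta^\vee\rangle \in \mathbb Z_{>0}$. Since $\beta \notin \Phi_I$, the indices $i$ and $j$ must lie in distinct blocks, say $i \in \mathbf p_t$ and $j \in \mathbf p_s$ with $t < s$. Writing $i' = i - p_{t-1}$ and $j' = j - p_{s-1}$ as in \eqref{iprime}, the positivity condition reads
\[
\langle \lambda_\mathbf c + \rho, \beta^\vee\rangle = (u_t - i' + \tfrac12) - (u_s - j' + \tfrac12) = u_t - u_s - i' + j' \in \mathbb Z_{>0},
\]
so in particular $u_t - u_s \in \mathbb Z$ (hence $2u_t, 2u_s$ have the same parity behaviour modulo the relevant half-integer shift), and $u_t - i' + \tfrac12 > u_s - j' + \tfrac12 - 1$.

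Next I would compute $s_\beta(\lambda_\mathbf c + \rho)$ explicitly. The reflection $s_{\epsilon_i - \epsilon_j}$ simply swaps the entries in positions $i$ and $j$, so $s_\beta(\lambda_\mathbf c + \rho)$ is obtained from $\lambda_\mathbf c + \rho$ by replacing the $i'$-th entry of $\mathbf d_t$, namely $u_t - i' + \tfrac12$, with $u_s - j' + \tfrac12$, and the $j'$-th entry of $\mathbf d_s$, namely $u_s - j' + \tfrac12$, with $u_t - i' + \tfrac12$. The goal is then to show that this modified weight fails to be $\Phi_I$-regular, i.e. it has two equal entries within a single block $\mathbf p_u$, or — if the criterion in Proposition~\ref{Pro:simcri} permits — that $\beta \notin \Psi^{++}_{\lambda_\mathbf c}$ by some other failure of the regularity condition. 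Concretely, inside the block $\mathbf p_s$ the new entry $u_t - i' + \tfrac12$ has been inserted; using $u_t - i' + \tfrac12 > u_s - j' + \tfrac12$ together with $q_s \gg 0$ (so that the block $\mathbf d_s$ runs through all values $u_s - \tfrac12, u_s - \tfrac32, \ldots, u_s + \tfrac12 - q_s$, an entire unbroken range of half-integers of length $q_s$), one checks that $u_t - i' + \tfrac12$ coincides with one of the untouched entries $u_s - l' + \tfrac12$ for some $1 \le l' \le q_s$ with $l' \ne j'$, provided $u_t - u_s - i' \in \{1 - q_s, \ldots, q_s\}$ — and this is where the smallness of $|u_t - u_s - i' + j'|$ relative to the large $q_s$ does the work. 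That produces two equal entries in $\mathbf d_s$ after reflection, i.e. $\langle s_\beta(\lambda_\mathbf c + \rho), \epsilon_j - \epsilon_l \rangle = 0$ for $\epsilon_j - \epsilon_l \in \Phi_I$, forcing $\beta \notin \Psi^{++}_{\lambda_\mathbf c}$.

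I would organize the bookkeeping by recording, as in the previous lemma, the precise inequalities that follow from $\langle \lambda_\mathbf c + \rho, \beta^\vee\rangle \in \mathbb Z_{>0}$ and $\min\{q_1, \ldots, q_k\} \gg 0$, namely that the quantity $u_t - i' + \tfrac12$ lands strictly inside the half-integer interval spanned by the entries of $\mathbf d_s$ (so there is room for it to hit an existing entry), and symmetrically that $u_s - j' + \tfrac12$ lands inside the interval spanned by $\mathbf d_t$. One of these two insertions must create a repeated entry within its block: this is forced because the two reflected entries differ by the integer $u_t - u_s - i' + j' > 0$, while each block after the $\gg 0$ choice of $q$'s contains a contiguous run of values, so at least one of the inserted values duplicates a pre-existing one in the block it lands in. The main obstacle I anticipate is making the "$q \gg 0$" quantification uniform and compatible with the one already fixed in Lemma~\ref{posi11} — i.e. verifying that a single choice of $(q_1, \ldots, q_k)$ (and the corresponding $\mathbf c$ via \eqref{cjuj}) simultaneously handles both the $\epsilon_i + \epsilon_j$ case and the $\epsilon_i - \epsilon_j$ case, and checking the boundary subcases where $i'$ or $j'$ is near the edge of its block, which is exactly why the paragraph preceding this lemma emphasizes that the condition here is "compatible with that in Lemma~\ref{posi11}." Once that compatibility is in hand, combining this lemma with Lemma~\ref{posi11} and Proposition~\ref{Pro:simcri} yields $\Psi^{++}_{\lambda_\mathbf c} = \emptyset$, hence $M^\frp(\lambda_\mathbf c)$ is simple and tilting, which is the content of Theorem~\ref{Thm:simpleVerma2}.
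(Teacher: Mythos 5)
Your setup is right: the only relevant $\beta=\epsilon_i-\epsilon_j$ have $i,j$ in distinct blocks, $s_\beta$ just swaps the two coordinates, and the goal is to force a repeated entry inside one block of $s_\beta(\lambda_{\mathbf c}+\rho)$, so that it fails to be $\Phi_I$-regular. But the step where you claim the collision is \emph{forced} --- ``the two reflected entries differ by the integer $u_t-u_s-i'+j'>0$, while each block contains a contiguous run of values, so at least one of the inserted values duplicates a pre-existing one'' --- is not a valid implication, and no choice of the $q$'s ``independently large'' makes it one. Concretely (in your labelling, $i\in\mathbf p_t$, $j\in\mathbf p_s$), the larger value $u_t-i'+\tfrac12$ hits an entry of $\mathbf d_s$ iff $1\le i'-(u_t-u_s)\le q_s$, and the smaller value $u_s-j'+\tfrac12$ hits an entry of $\mathbf d_t$ iff $1\le j'+(u_t-u_s)\le q_t$; if the fixed integer $u_t-u_s$ is large and positive while $i'$ is small, the first fails at the lower end and the second fails unless $q_t\ge j'+(u_t-u_s)$, where $j'$ ranges up to $q_s$. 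So what is actually required is a system of inequalities \emph{between} the $q$'s, of the form $q_t-q_s\ge u_t-u_s$ (i.e.\ the block with the larger $u$-value must be longer by at least the integer difference, so that one block's range of half-integers contains the other's). You correctly flag ``making the $q\gg0$ quantification uniform and compatible'' as the main obstacle, but you do not resolve it --- and resolving it is essentially the whole content of the lemma.

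The paper's proof does exactly this: it sets $m_{ab}=u_a-u_b$, reorders the indices so that $m_{12}\ge m_{13}\ge\cdots\ge m_{1k}$, and then chooses $q_1,\dots,q_k$ \emph{successively} subject to the triangular system of inequalities \eqref{equ:chooseofqtqs>1} and \eqref{Equ:case2q1}--\eqref{Equ:case3q12} (split into the cases $m_{1k}\ge0$, $m_{12}\le0$, and the mixed case), so that for every relevant pair $(s,t)$ one has $(\mathbf d_s)\subset(\mathbf d_t)$ or $(\mathbf d_t)\subset(\mathbf d_s)$; the containment then immediately yields the repeated entry after reflection. Because the system is triangular in the reordered indices, it is simultaneously satisfiable together with the (independent) largeness conditions of Lemma~\ref{posi11}, which is the compatibility you were worried about. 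To complete your argument you would need to supply this ordering-and-successive-choice step, or an equivalent consistency argument; without it the claimed collision can genuinely fail.
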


\begin{proof}  
We can pick $\mathbf c$ with the associated $q_1, q_2, \ldots, q_k$ such that \eqref{cjuj} holds, and    $\min\{q_1, q_2, \ldots, q_k\}\gg 0$.
 We can assume    $ i \in \mathbf  p_{s}$  and $j\in \mathbf p_{t}$  with $1\le s<t\le k$ since 
$\beta=\epsilon_i-\epsilon_j\in \Psi_{\lambda_{\mathbf c}}^+$. 
Thus, 
\begin{equation}
\label{Equ:betai-j}
 \langle \lambda_\mathbf c+\rho, \beta^\vee\rangle=  u_s-  u_t+j'-i'\in \mathbb Z_{>0},   
\end{equation}
where $ i'$ is defined as in \eqref{iprime}.

We write $  u_a-  u_b=m_{ab}$ and,   by \eqref{Equ:betai-j},   assume without loss of generality that $m_{ab}\in \Z$ for all $a,b$ . By appropriately rearranging $  u_1,   u_2, \ldots,   u_k$,  we may assume that 
\begin{equation} \label{mine} m_{12}\ge m_{13}\ge \ldots \ge m_{1k}.\end{equation} 
For any $1<s<t$, we have  $  u_s-  u_t=m_{1t}-m_{1s}\leq 0$, which implies 
\begin{equation}
\label{Equ:condusut}
    u_s-\frac{1}{2} \le   u_t-\frac{1}{2}.  
\end{equation}
  We choose $q_t$ and $q_s$ such that 
  \begin{equation}
  \label{Equ:conqtqs}
      q_t\ge q_s-m_{st}.
  \end{equation}
This selection is feasible since we can first pick  sufficiently large $p_1, p_2, \ldots, p_k\gg 0$, independently,  and then determine $c_1, c_2, \ldots, c_k$ using \eqref{cjuj}.  From \eqref{Equ:conqtqs}, it follows  that 
  $$(   u_s+ \frac{1}{2}-q_s) - (  u_t+\frac{1}{2}-q_t )= q_t-q_s+m_{st}\ge 0.$$
  As a result,  
 $ (\mathbf d_s)\subset (\mathbf d_t) $ where $(\mathbf d_t)$ is defined as in \eqref{dt}. 
  This inclusion ensures  that  the  $t$-th part of $s_\beta (\lambda_\mathbf c+\rho)$ 
  contains two equal numbers,  making   $s_\beta (\lambda_\mathbf c+\rho)$  not $\Phi_I$-regular. Consequently,  $\beta \notin\Psi^{++}_{\lambda_\mathbf c}$.
In conclusion, if we choose 
\begin{equation}
\label{equ:chooseofqtqs>1}
    q_t\ge \max\{q_s-m_{st}\mid 2\le s< t\}, \text{ for all } 3\le t\le k,
\end{equation}
 then $\beta\not\in \Psi^{++}_{\lambda_{\mathbf c}}$ for any $\beta=\epsilon_i-\epsilon_j$ with $ i \in \mathbf  p_{s}$  and $j\in \mathbf p_{t}$, $1<s<t\le k$.

Suppose $s=1$. We claim  that  either $\mathbf d(t)\subset \mathbf d(1)$ or $\mathbf d(t)\supset \mathbf d(1)$ holds if we impose suitable restrictions on $q_1, q_2, \ldots, q_k$. In either case,   the larger set contains two equal numbers. Specifically, if $(\mathbf d(t))\subset (\mathbf d(1))$, then the $1$-th part of $s_\beta(
\lambda_{\mathbf c} +\rho)$ contains two identical  entries. Conversely,  a similar result holds by considering  the $t$-th part instead of the $1$-th part. In either  case, $s_\beta(\lambda_{\mathbf c}+\rho)$ is not $\Phi_I$-regular, which ensures that   $\beta\not\in \Psi_{\lambda_\mathbf c}^{++}$. 

We verify our claim as follows. 
First, suppose  $m_{1k}\ge 0$. By \eqref{mine}, we have   $u_1-\frac{1}{2}\ge u_t-\frac{1}{2}$. We  choose 
 \begin{equation}
 \label{Equ:case2q1}
   q_1\ge \max\{q_t+m_{1t}\mid 2\le t\le k\}.   
 \end{equation} 
 As a result, we have  
 $(\mathbf d_t) \subset (\mathbf d_1)$,
 because   $  u_1+\frac{1}{2}-q_1\le   u_t+\frac{1}{2}-q_t$
 by \eqref{Equ:case2q1}.

 Next,  consider the case where   $m_{12}\le  0$. 
   We choose
\begin{equation}
\label{Equ:case3q1}
    q_t\ge \max\{q_h-m_{ht}\mid 1\le h<t\}, \text{ for } 2\le t\le k.
\end{equation} 

Finally, for the case 
$m_{12}\ge m_{13}\ge \ldots\ge m_{1h}\ge 0 > m_{1,h+1}\ge \ldots\ge m_{1k}$,   
we  choose $q_1, q_2, \ldots, q_k$ successively such that 
\begin{equation}
 \label{Equ:case3q12}
 \begin{aligned} &
    q_1\ge \max\{q_t+m_{1t}\mid 2\le t\le h \},\\  & q_t\ge \max\{ q_i-m_{it}\mid 2\le i\le t-1\} \text{ for } 2\le t\le h, \\
 & q_t\ge \max\{q_i-m_{it}\mid 1\le i\le t-1 \} \text{ for }h+1\le t\le k.
\\ 
\end{aligned}
\end{equation}  
In each of these  cases, we ensure that  either $(\mathbf d_t) \subset (\mathbf d_1)$ or $(\mathbf d_t)\supset (\mathbf d_1)$,  completing  the proof of our claim. \end{proof}

``\textbf{Proof of Theorem~B}":  If $\Psi^+_{\lambda_\mathbf c}=\emptyset$, there is nothing to  prove. Otherwise,  any  
element in  $\Psi^+_{\lambda_{\mathbf c}}$
is of form $\epsilon_i+\epsilon_j  $ or  $\epsilon_i-\epsilon_j$.  

In either case, by Lemma~\ref{posi11} or  Lemma~\ref{pos12},  we can impose additional restrictions on $\mathbf c$ along with suitable chosen  $q_1, q_2, \ldots, q_k\gg 0$ to ensure that \eqref{cjuj} holds and that  
$\Psi_{\lambda_{\mathbf c}}^{++}=\emptyset$. Consequently, by Proposition~\ref{Pro:simcri},  $M^\mathfrak p(\lambda_{\mathbf c})$ is both simple and tilting.
\qed

From this point on, we always assume that the $\mathbf c$ and $q_i$ are chosen as in Theorem~B, 
and that
$  u_{k+1},   u_{k+2}, \ldots,   u_{2k}\in \mathbb C^k$ satisfy \eqref{uij21}. 
We  define 
\begin{equation}
\label{tildeu} 
{\mathbf u}_{2k}=(u_1, u_2, \ldots, u_k, u_{k+1}, \ldots, u_{2k})\end{equation} 
where $u_{1}, u_2, \ldots, u_{2k}$ are given by 
\eqref{cjuj}-\eqref{uij21}.

\subsection{Decomposition numbers for $B_{2k,r}( \mathbf u_{2k})$ }

Recall ${\mathbf F}_{r}$ and $\bar {\mathbf F}_{r}$ \eqref{frbar}.
For any $\mu \in \Lambda^\mathfrak p$, we define 
\begin{equation} \label{hatmu}\ \ \mathbf \delta^\mu =\mu-\lambda_{\mathbf c}, \text{ and }|\delta^\mu|=\sum_{i=1}^n |\delta_i^\mu|.\end{equation}
It follows from \cite[Lemma~4.8]{RS-cyc} (see also the proof of \cite[Theorem 5.4]{RS-cyc}) that 
\begin{equation}
\label{equ:criterioninFr}
{\mathbf F}_{r}=\bigcup_{f=0}^{\lfloor r/2\rfloor}\left\{\mu\in \Lambda^\mathfrak p\mid \delta^\mu_i\in \mathbb Z , \delta^\mu_{p_{s-1}+1} \ge 
 \ldots \ge \delta^\mu_{p_s}, \forall 1\le s\le k,  |\delta^\mu|=r-2f\right\}. 
\end{equation}

Let $\tilde\  : {\mathbf F}_{r} \rightarrow \Lambda_{2k,r}$ denote the inverse of the map $\hat{} $ introduced in Section~1.   The map  $\tilde\ $ is a bijection satisfying 
\begin{equation}
\label{equ:defoftilde}
 \tilde \mu^{(j)}=\begin{cases} (\delta^\mu_{p_{j-1}+1},\ldots, \delta^\mu_{p_{j-1}+r} ) &\text{if $1\le j\le k$,}\\  (-\delta^\mu_{p_{2k-j}},\ldots, -\delta^\mu_{p_{2k-j}-r+1} ) &\text{if 
 $k+1\le j\le 2k$.}
\end{cases}   
\end{equation}
Here we identify $\tilde \mu:=(\tilde u^{(1)}, \tilde u^{(2)}, \ldots, \tilde u^{(2k)}) $ with $(f,\tilde \mu)$ since $f$ is uniquely determined by the equation $2f=r-\sum_{j=1}^{2k}|\tilde \mu^{(j)}|$.

Recall the right  cell modules $C(f,\lambda)$, and simple modules  $D(f,\lambda)$ introduced  in the introduction for $ B_{2k,r}(\mathbf u_{2k})$. The following result is a special case of \cite[Theorem D]{GRX}. Notably, $F_0$ is always saturated, as  $M^\frp(\lambda_{\mathbf c})$ is simple in our case.  
\begin{Theorem}
    \label{first1}  Suppose Condition~\ref{keyconj} holds. 
    Then,  we have  
    \begin{itemize}
    \item[(1)]  $\Hom_{\mathcal O^\frp } (M^{\frp}( \hat\lambda), M_{r,\mathbf c})\cong C( f,\lambda')$, as  right $  B_{2k, r}(\mathbf {u}_{2k})$-modules, $\forall \ (f, \lambda')\in \Lambda_{2k,r}$.  
    \item[(2)]  $D(\hat \lambda)\cong D(f, \lambda')$ for all $(f, \lambda')\in \bar{\Lambda}_{2k, r}$. 
\item [(3)]   $M_{r,\mathbf c}=\oplus_{(f, \lambda')\in \bar \Lambda_{2k,r}} T^{\frp}(\hat \lambda )^{\oplus \dim D(f,  \lambda')} $. 
\item[(4)] 
$ [C( f,\lambda'): D(\ell,\mu')]=(T^{\mathfrak p}( \hat\mu):M^{\mathfrak p}(\hat \lambda))$, 
   $\forall\  (f, \lambda)\in \Lambda_{2k, r}$, and $(\ell , \mu')\in\bar\Lambda_{2k, r}$.\end{itemize}     \end{Theorem}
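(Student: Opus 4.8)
The plan is to derive Theorem~\ref{first1} as a direct specialization of \cite[Theorem~D]{GRX}, verifying that its hypotheses hold in our setting. First I would recall that under Assumption~\ref{1123} and the choices made in Theorem~B, the parabolic Verma module $M^\frp(\lambda_{\mathbf c})$ is simple; hence the stratum $F_0$, consisting only of $\lambda_{\mathbf c}$ itself, is automatically saturated with respect to $\prec$, which is the base case needed to invoke the inductive structure in \cite{GRX}. With Condition~\ref{keyconj} in force for all $0\le j\le r$, the hypotheses of \cite[Theorem~D]{GRX} are satisfied for $M_{r,\mathbf c}=M^\frp(\lambda_{\mathbf c})\otimes V^{\otimes r}$, so that the conclusions listed there translate verbatim into statements (1)--(4), once we match the combinatorial bookkeeping.

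The key steps, in order, are: (i) record the identification $B_{2k,r}^{op}(\mathbf u_{2k})\cong \End_{\mathcal O^\frp}(M_{r,\mathbf c})$ from Theorem~\ref{The-RSmain}(1), so that $\Hom_{\mathcal O^\frp}(M^\frp(\hat\lambda),M_{r,\mathbf c})$ and $\Hom_{\mathcal O^\frp}(T^\frp(\hat\lambda),M_{r,\mathbf c})$ become right $B_{2k,r}(\mathbf u_{2k})$-modules; (ii) use the bijection $\hat{}\colon\Lambda_{2k,r}\to \mathbf F_r$ from \cite[(4.4)]{GRX} together with \eqref{isocell1} to transport the abstract cell modules $S(\hat\lambda)$ to the combinatorially defined cell modules $C(f,\lambda')$, giving (1); (iii) read off (2) from the fact, due to \cite{AST}, that $D(\hat\lambda)\ne 0$ iff $\hat\lambda\in\bar{\mathbf F}_r$, combined with the characterization of $\bar{\mathbf F}_r$ in \eqref{isocell1} and the classification of simples $D(f,\lambda')$ of $B_{2k,r}(\mathbf u_{2k})$ via $\bar\Lambda_{2k,r}$; (iv) obtain (3) by rewriting the tilting decomposition \eqref{dec-til} using $\bar{\mathbf F}_r$ and the fact that the multiplicity $n_\lambda$ equals $\dim D(f,\lambda')$, which is exactly the content of the cellular/quasi-hereditary formalism of \cite{AST} (the multiplicity of an indecomposable tilting summand equals the dimension of the corresponding simple); and (v) deduce (4) from the BGG-type reciprocity $[S(\lambda):D(\mu)]=(T^\frp(\mu):M^\frp(\lambda))$ of \cite[Corollary~5.10]{RS-cyc} — which is \eqref{equ:decom1} — after translating indices through $\hat{}$ and the primed-partition convention.

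The main obstacle I anticipate is step (ii)--(iii): ensuring that the combinatorial indexing in \cite{GRX} (the sets $\Lambda_{2k,r}$, $\bar\Lambda_{2k,r}$, the transpose $\lambda\mapsto\lambda'$, and the generalized Mullineaux involution $\sigma$) is matched correctly against the Lie-theoretic indexing by weights in $\Lambda^\frp$, so that the claimed isomorphisms $S(\hat\lambda)\cong C(f,\lambda')$ and $D(\hat\lambda)\cong D(f,\lambda')$ hold with the right partition appearing on each side. This requires that the dominance order $\le$ on $\mathbf F_r$ correspond, under $\hat{}$, to the order on $\Lambda_{2k,r}$ used to define the weakly cellular basis of \cite[Theorem~2.4]{GRX}, and that the radical-of-form constructions on both sides agree; all of this is packaged in \cite[Theorem~D]{GRX}, so the work here is one of careful citation and checking that Condition~\ref{keyconj} (the saturated condition) is precisely the hypothesis under which that theorem applies. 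Once the dictionary is in place, statements (1)--(4) follow formally, and the proof is essentially the observation recorded in the text: \emph{this is a special case of \cite[Theorem~D]{GRX}}, with $F_0$ saturated because $M^\frp(\lambda_{\mathbf c})$ is simple.
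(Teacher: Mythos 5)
Your proposal matches the paper's treatment: the paper proves Theorem~\ref{first1} exactly by observing that it is a special case of \cite[Theorem~D]{GRX}, with the remark that $\mathbf F_0$ is automatically saturated because $M^\frp(\lambda_{\mathbf c})$ is simple under the standing assumptions. Your additional bookkeeping (the identification from Theorem~\ref{The-RSmain}(1), the bijection $\hat{\ }$, and the reciprocity \eqref{equ:decom1}) is consistent with how the surrounding text sets up the statement, so the argument is correct and essentially identical in route.
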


\subsection{A sufficient condition for the saturated condition }
It was proved in \cite[Theorem~E]{GRX} that ${\mathbf F_{j}}$ is saturated for any $1\le j\le r$  if  
 $\lambda_{\mathbf c}$ satisfies \begin{equation}\label{keyass} \langle \lambda_{\mathbf c}+\rho, \beta^\vee\rangle\not \in \mathbb Z_{>0}, \ \ \text{ for all $\beta\in \Phi^+\setminus \Phi_I.$}\end{equation} 
In this case, $\Psi^+_{\lambda_\mathbf c}=\emptyset$.

This subsection aims to prove that   
${\mathbf F_{j}}$ is saturated for any $1\le j\le r$, if  
 \begin{equation}\label{PhiA}\Phi_A\cap \Psi_{\lambda_\mathbf c}^+=\emptyset,\end{equation} where $ \Phi_A$ is defined as  in \eqref{equ:defofphiA}. 

It is worth noting  that this condition is strictly weaker than    \eqref{keyass}. For the case  $k=1$, the equality 
\eqref{PhiA} always hold  unconditionally. 
Theorem~\ref{first1} subsequently  establishes an explicit correspondence  between parabolic Verma modules appearing as subquotients of $M_{ r, \mathbf c}$  in the parabolic  BGG category associated with the maximal parabolic subalgebra of $\mathfrak{so}_{2n}$, and  the right cell modules of $B_{2, r}(\mathbf u_{2})$.  Thus, our result strengthens the bijection established in \cite{ES2}, which lacked  an  explicit constructive characterization of right cell modules for $B_{2, r}(\mathbf u_{2})$.

\begin{Lemma}\label{lem:keyde} Suppose $\Phi_A\cap \Psi_{\lambda_\mathbf c}^+=\emptyset$. 
Let  $\mu\in {\mathbf F}_{r}$ and $\beta\in \Phi^+\setminus \Phi^+_I$
such that $\langle  \lambda_\mathbf c+\rho ,\beta^\vee\rangle\in \mathbb Z_{>0}$ and 
$\langle  \mu+\rho ,\beta^\vee\rangle\in \mathbb Z_{>0}$.
If there exists  some $w_\beta\in W_{I}$ such that $\nu=w_\beta s_\beta( \mu+\rho)-\rho\in \Lambda^\mathfrak p$, then we have  $\nu\in {\mathbf F}_{r}$.
\end{Lemma}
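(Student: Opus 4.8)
The plan is to reduce the statement to the combinatorial description of $\mathbf{F}_r$ given in \eqref{equ:criterioninFr}, by tracking carefully how the $n$-tuple $\delta^\mu$ changes when we pass from $\mu$ to $\nu = w_\beta s_\beta(\mu+\rho)-\rho$. First I would observe that since $\nu\in\Lambda^{\mathfrak p}$ is already assumed, the only thing left to check is that $\delta^\nu_i\in\mathbb Z$ for all $i$, that the entries are weakly decreasing within each block $\mathbf p_s$, and that $|\delta^\nu|=r-2f'$ for some $0\le f'\le\lfloor r/2\rfloor$ of the same parity as $r$. The integrality and the block-monotonicity will both be essentially automatic: $w_\beta\in W_I$ permutes coordinates (with possible sign changes across blocks, but $W_I$ is of type $A$ so within the relevant part it is just a permutation) and $s_\beta$ acts by a transposition or a transposition-with-signs on at most two coordinates, all of which preserves the lattice $\lambda_{\mathbf c}+\rho+\mathbb Z^n$; block-monotonicity of $\delta^\nu$ holds because $\nu+\rho$ is $\Phi_I$-dominant after applying $w_\beta$, by construction of $w_\beta$, which is exactly the statement that the entries of $\nu+\rho$ decrease strictly within each block, hence the entries of $\delta^\nu = \nu - \lambda_{\mathbf c}$ decrease weakly. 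So the real content is the control of $|\delta^\nu|$.

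The key step is therefore: show $|\delta^\nu|\le |\delta^\mu|$ and $|\delta^\nu|\equiv|\delta^\mu|\pmod 2$. The parity is clear because $s_\beta$ changes the sum of coordinates of $\mu+\rho$ by $-2\langle\mu+\rho,\beta^\vee\rangle\langle\beta,\cdot\rangle$-type integer amounts and $w_\beta$ only permutes/sign-flips, and $\rho$ is fixed, so $\sum\nu_i\equiv\sum\mu_i\pmod{\text{something controllable}}$; more precisely I would compare $\sum_i(\nu_i - \lambda_{\mathbf c,i})$ with $\sum_i(\mu_i-\lambda_{\mathbf c,i})$ directly. For the inequality $|\delta^\nu|\le|\delta^\mu|$, the point is that $\beta\notin\Phi_A$ is forbidden by the hypothesis $\Phi_A\cap\Psi^+_{\lambda_{\mathbf c}}=\emptyset$ together with $\langle\lambda_{\mathbf c}+\rho,\beta^\vee\rangle\in\mathbb Z_{>0}$, so $\beta$ must be of the form $\epsilon_i+\epsilon_j$ (a positive root not of $A$-type), with $i,j$ in possibly different blocks. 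Then $s_\beta$ sends $(\ldots,(\mu+\rho)_i,\ldots,(\mu+\rho)_j,\ldots)$ to $(\ldots,-(\mu+\rho)_j,\ldots,-(\mu+\rho)_i,\ldots)$, and I would estimate $|\delta^\nu|$ block by block using the triangle inequality: for the coordinates untouched by $\beta$ and by $w_\beta$ nothing changes, and for the affected coordinates one bounds $|{-(\mu+\rho)_j}-\lambda_{\mathbf c,?}| + |{-(\mu+\rho)_i}-\lambda_{\mathbf c,?}|$ against $|(\mu+\rho)_i-\lambda_{\mathbf c,?}|+|(\mu+\rho)_j-\lambda_{\mathbf c,?}|$, using that $\lambda_{\mathbf c}+\rho$ has the very special block structure \eqref{dt} (arithmetic progressions with the large gaps $q_s\gg 0$ from Theorem~B), so that the sign flips do not increase the total $\ell^1$-distance to $\lambda_{\mathbf c}$. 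The role of $w_\beta$ is only to restore $\Phi_I$-dominance by sorting within blocks, which again does not change $|\delta^\nu|$ since it permutes coordinates inside a single block and $\lambda_{\mathbf c}$ is constant on each block.

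The main obstacle I anticipate is precisely the case analysis in the last estimate when $\beta=\epsilon_i+\epsilon_j$ with $i\in\mathbf p_t$ and $j\in\mathbf p_s$, $t\ne s$: here one must argue that after the sign flips, the two affected coordinates, once re-sorted into $\Phi_I$-dominant position by $w_\beta$, still land "close enough" to the block-constant values $\lambda_{\mathbf c,i}$, $\lambda_{\mathbf c,j}$, and that the large-gap hypothesis $\min\{q_1,\dots,q_k\}\gg 0$ forces them into the correct blocks without crossing — this is morally the same mechanism as in the proofs of Lemma~\ref{posi11} and Lemma~\ref{pos12}, and I would try to quote or mimic those inequalities. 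A secondary subtlety is bookkeeping the type-$D$ sign changes in $W_I$: since $I$ omits $\alpha_{p_1},\dots,\alpha_{p_k}$ and in particular omits $\alpha_n$, the Levi $\mathfrak l$ is a product of type-$A$ factors, so $W_I$ is a product of symmetric groups acting by permutations only — I would record this explicitly up front to avoid worrying about spurious sign flips, after which the whole argument becomes a controlled $\ell^1$-distance estimate. Once $|\delta^\nu| = r - 2f'$ with $0\le f'\le\lfloor r/2\rfloor$ is established, \eqref{equ:criterioninFr} gives $\nu\in\mathbf F_{r}$ immediately.
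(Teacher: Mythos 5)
Your proposal follows essentially the same route as the paper: reduce to the criterion \eqref{equ:criterioninFr}, note that the hypothesis forces $\beta=\epsilon_i+\epsilon_j$, and control $|\delta^\nu|$ using the block structure of $\lambda_{\mathbf c}+\rho$ and the large gaps $q_s\gg 0$, with $w_\beta$ merely re-sorting within blocks. The only difference is that where you propose a triangle-inequality bound $|\delta^\nu|\le|\delta^\mu|$ plus a separate parity check, the paper tracks the affected coordinates explicitly and shows the $\ell^1$-norm drops by exactly $2\big((\mu+\rho)_i+(\mu+\rho)_j\big)=2\langle\mu+\rho,\beta^\vee\rangle$, which settles the inequality and the parity in one stroke; the case analysis you flag as the main obstacle is precisely the content of the paper's proof.
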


\begin{proof} 
Since  $\Phi_A\cap \Psi_{\lambda_\mathbf c}^+=\emptyset$, we only need  to consider $\beta=\epsilon_i+\epsilon_j$ with $i<j$, where  either  $i,j\in \mathbf p_s$,  for some $1\le s\le k$ or  $i\in \mathbf p_s$ and $j\in \mathbf p_t$, with  $1\le s<t\le k$. From this point to the end of the proof, we denote  $\lambda_\mathbf c$ by  $\lambda$ for simplicity. Recall we
identify  $\lambda+\rho$ with $(\mathbf d_1, \mathbf d_2, \ldots, \mathbf d_k)$, where $(\mathbf d_t)$ was defined as in \eqref{dt}.
Then, we have 
$$(\lambda+\rho)_l=u_s-l'+\frac{1}{2}$$ for $p_{s-1}+1\le l\le p_s$, where $l'$ is defined as  in \eqref{iprime}.

In the first case, where  $i, j\in \mathbf p_s$, we have  $ 2u_s\in \mathbb Z $ and $u_s>1$,  since 
 $\langle \lambda+\rho, \beta^\vee\rangle\in \mathbb Z_{> 0} $.
Moreover, the condition $$ (\lambda+\rho)_i+ (\lambda+\rho)_j= 2u_s+1-i'-j'>0$$
implies that $i,j<p_s-r$ since $q_s\gg0$.
This ensures that  both $\delta^\mu_i$ and $\delta^\mu _j$ are non-negative.

Next,   consider $s_\beta(\mu+\rho)$. For any admissible $h$, we have  
$$s_\beta(\mu+\rho)_h=\begin{cases} -(\mu+\rho)_i &\text{if $h=j$,}\\
- (\mu+\rho)_j &\text{if $h=i$,}\\
 (\mu+\rho)_h & \text{otherwise.}\\
 \end{cases}
 $$
Thus, the $s$-th part of $s_\beta(\mu+\rho)$ is:
$$(u_s-\frac{1}{2}+\delta^\mu_{p_{s-1}+1},  \ldots, - (\mu+\rho)_j,  \ldots,- (\mu+\rho)_i, \ldots,   u_s-q_s+\frac{1}{2}+\delta^\mu_{p_s}).$$ Since $q_s\gg0$,  for any $q_s-2r<h'\le q_s$,
we have   
$$-(\mu+\rho)_i-(\mu+\rho)_h= i'+h'-2u_s-1-\delta_i^\mu -\delta_h^\mu>0.$$ 

Notably, since  $-(\mu+\rho)_i-(\mu+\rho)_j=-\langle \mu+\rho,\beta^\vee\rangle\in \Z_{<0}$, 
there exists  some $a$ such that $(\nu+\rho)_a=-(\mu+\rho)_i $ with $j'\le a'< q_s-2r$.
Moreover, if $b\in \mathbf p_s$ and $b>a$, then 
$ (\nu+\rho)_b= (\mu+\rho)_b $, and  
$$  (\nu+\rho)_{p_{s-1}+1}\ge  (\nu+\rho)_{p_{s-1}+2}\ge \ldots \ge (\nu+\rho)_{a+1}.$$
In particular, $ ( \nu+\rho)_{ a+1}=(\mu+\rho)_{ a+1}\ge  (\lambda+\rho)_{ a+1}$. Consequently, we have   
$$
    \sum_{l\notin  \{p_{s-1}+1,\ldots, a\}} |(\nu+\rho)_l-(\lambda+\rho)_l| = \sum_{l\notin  \{p_{s-1}+1,\ldots, a\}} |(\mu+\rho)_l-(\lambda+\rho)_l|,$$
    and $$
     \sum_{l=p_{s-1}+1}^a |(\nu+\rho)_l-( \lambda+\rho)_l| = \sum_{l=p_{s-1}+1}^a  |(\mu+\rho)_l-( \lambda+\rho)_l|-2((\mu+\rho)_i+(\mu+\rho)_j ).$$
This ensures that $|\delta^\nu|=r-2f$ for some $f$ since 
$\mu\in \mathbf F_r $ and $(\mu+\rho)_i+(\mu+\rho)_j \in \Z_{>0} $. Hence, 
$\nu\in  {\mathbf F}_{r}$ by \eqref{equ:criterioninFr}.

In the second case, we assume  $\beta=\epsilon_i+\epsilon_j$    with $i\in \mathbf p_s$,  $j\in \mathbf p_t$, and   $ s<t$.
Since   $\langle  \lambda+\rho, \beta^\vee\rangle\in \mathbb Z_{> 0} $, we must have $ u_s+u_t\in \mathbb Z $ and $u_s+u_t>1$. Moreover, because $q_s,q_t\gg0$, we have $(\lambda+\rho)_i+ (\lambda+\rho)_j= u_s+u_t+1-i'-j'>0$ if   $i<p_s-r$ and $ j<p_t-r$. 
This ensures that  both  $\delta^\mu_i$ and $\delta^\mu_j$ are non-negative.  

For $ s_\beta( \mu+\rho)$, we  have 
$$s_\beta( \mu+\rho)_h=\begin{cases} - (\mu+\rho)_i &\text{if $h=j$,}\\ -(\mu+\rho)_j &\text{if $h=i$,}\\
  (\mu+\rho)_h, &\text{otherwise.}
  \end{cases}$$
So, the $s$-th and $t$-th parts of $s_\beta( \mu+\rho)$ are: 
$$\begin{aligned} & (u_s-\frac{1}{2}+\delta^\mu_{p_{s-1}+1},  \ldots, -(\mu+\rho)_j  , \ldots,   u_s-q_s+\frac{1}{2}+\delta^\mu_{p_s}), \\
& (u_t-\frac{1}{2}+\delta^\mu_{p_{t-1}+1},     \ldots,-(\mu+\rho)_i, \ldots,   u_t-q_t+\frac{1}{2}+\delta^\mu_{p_t}).\\
\end{aligned}$$ Since $q_s\gg 0$,  we have 
 $-\mu_j-\mu_h= j'+h'-u_s-u_t-1-\delta_i^\mu -\delta_h^\mu>0$
for  $q_s-2r<h'<q_s$.  Consequently,  $-(\mu+\rho)_j=  (\nu+\rho)_a$ for some $a$ such that  $i'\le a'< q_s-2r$, since $-(\mu+\rho)_i-(\mu+\rho)_j=-\langle \mu+\rho,\beta^\vee\rangle\in \Z_{<0}$. 
Similarly, we have $-(\mu+\rho)_i= (\nu+\rho)_b$ for some $b$ such that  $j'\le b'< q_t-2r$.
Moreover, for $ a, c\in \mathbf p_s$ with $c>a$, or $b,c\in \mathbf p_t$ with  $c>b$, 
we have  $  (\nu+\rho)_c= (\mu+\rho)_c $, and  
\[ \begin{aligned} & (\nu+\rho)_{p_{s-1}+1}\ge (\nu+\rho)_{p_{s-1}+2}\ge \ldots \ge (\nu+\rho)_{ a+1}, \\
& (\nu+\rho)_{p_{t-1}+1}\ge  (\nu+\rho)_{p_{t-1}+2}\ge \ldots \ge (\nu+\rho)_{ b+1}.\end{aligned} \]
In particular, $ (\nu+\rho)_{ h+1}=(\mu+\rho)_{ h+1}\ge  (\lambda+\rho)_{h+1}$  for  $h\in \{a, b\}$. 
 This ensures that: 
$$\begin{aligned}
    \sum_{l\notin  \{p_{s-1}+1,\ldots, a\}\cup \{ p_{t-1}+1,\ldots ,b\}} |\nu_l-\lambda_l| &= \sum_{l\notin  \{p_{s-1}+1,\ldots, a\}\cup \{ p_{t-1}+1,\ldots ,b\}} |\mu_l- \lambda_l|,\\
      \sum_{l \in  \{p_{s-1}+1,\ldots, a\}\cup \{ p_{t-1}+1,\ldots ,b\}} |\nu_l-\lambda_l| &= \sum_{l \in  \{p_{s-1}+1,\ldots, a\}\cup \{ p_{t-1}+1,\ldots ,b\}} |\mu_l-\lambda_l|-2y, 
\end{aligned} $$
where $y= (\mu+\rho)_i+(\mu +\rho)_j\in \Z_{>0}$.
This implies $|\delta^\nu|=r-2f'$ for some $f'$, 
and  we conclude  that  $ \nu\in  {\mathbf F}_{r}$ by \eqref{equ:criterioninFr}. 
\end{proof}

 \begin{Theorem}
\label{Thm:phidecomposition}
If $\Phi_A\cap \Psi_{\lambda_\mathbf c}^+=\emptyset$, then Condition~\ref{keyconj} is satisfied,   and hence   Theorem~\ref{first1}(1)-(4) hold. 
    \end{Theorem}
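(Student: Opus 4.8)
The plan is to deduce Theorem~\ref{Thm:phidecomposition} directly from the combinatorial characterization of $\mathbf F_j$ in \eqref{equ:criterioninFr} together with Lemma~\ref{lem:keyde}, which is the technical heart of the matter. First I would recall what must be shown: Condition~\ref{keyconj} asks that each $\mathbf F_j$ ($0\le j\le r$) be saturated under the order $\prec$ on $\Lambda^\frp$, i.e.\ that $\mu\in \mathbf F_j$ whenever $\mu\prec\lambda$ for some $\lambda\in\mathbf F_j$. Since $\prec$ is the transitive closure of the relation ``$L(\mu)$ is a composition factor of $M^\frp(\gamma)$'', it suffices by induction on the length of a $\prec$-chain to treat a single step: assume $\lambda\in\mathbf F_j$ and $L(\mu)$ occurs in $M^\frp(\lambda)$, and show $\mu\in\mathbf F_j$. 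Note also that $j$ here plays the role of $r$ in Lemma~\ref{lem:keyde} (the lemma is stated for a fixed but arbitrary non-negative integer), so it is enough to run the argument with $r$ replaced throughout by $j$.

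Next I would invoke the block/linkage description of composition factors of a parabolic Verma module. By the Jantzen sum formula together with the standard fact that any composition factor $L(\mu)$ of $M^\frp(\lambda)$ has highest weight $\mu$ lying in the same linkage class as $\lambda$ and satisfying $\mu\le\lambda$, the weight $\mu+\rho$ is obtained from $\lambda+\rho$ by a sequence of reflections $s_\beta$ (for $\beta\in\Phi^+$ with $\langle\lambda+\rho,\beta^\vee\rangle\in\Z_{>0}$) followed by $W_I$-adjustments to land back in $\Lambda^\frp$ — more precisely, by a result in the spirit of \cite[Theorem~9.13]{Hum} and its refinements, $\mu$ is connected to $\lambda$ through a chain of weights each of the form $w_\beta s_\beta(\cdot+\rho)-\rho$. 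This reduces the claim to a single reflection step, which is exactly the hypothesis and conclusion of Lemma~\ref{lem:keyde}: if $\lambda\in\mathbf F_r$ (with $\langle\lambda_{\mathbf c}+\rho,\beta^\vee\rangle\in\Z_{>0}$, $\langle\lambda+\rho,\beta^\vee\rangle\in\Z_{>0}$, and $w_\beta s_\beta(\lambda+\rho)-\rho\in\Lambda^\frp$), then that weight again lies in $\mathbf F_r$. Chaining these single steps gives saturatedness of $\mathbf F_j$ for every $j$, i.e.\ Condition~\ref{keyconj}. Once Condition~\ref{keyconj} is established, Theorem~\ref{first1}(1)--(4) follow immediately since that theorem is stated precisely under the hypothesis that Condition~\ref{keyconj} holds.

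One subtlety I would be careful about: Lemma~\ref{lem:keyde} requires $\langle\lambda_{\mathbf c}+\rho,\beta^\vee\rangle\in\Z_{>0}$ in addition to $\langle\lambda+\rho,\beta^\vee\rangle\in\Z_{>0}$, i.e.\ the reflecting root $\beta$ must lie in $\Psi^+_{\lambda_{\mathbf c}}$. So in the chain connecting $\lambda$ to $\mu$ I must only ever need reflections in roots that are positive on $\lambda_{\mathbf c}+\rho$ as well; this is where the linkage principle within a fixed block (the block of $\lambda_{\mathbf c}$, since all relevant weights differ from $\lambda_{\mathbf c}$ by elements of $\Z\Phi$ and sit in the same integral Weyl group orbit) guarantees that the only reflections occurring are in roots of the integral root system $\Phi_{[\lambda_{\mathbf c}]}$, and a root $\beta$ can contribute to a genuine linkage step between weights above $\lambda_{\mathbf c}$ only if $\langle\lambda_{\mathbf c}+\rho,\beta^\vee\rangle\in\Z_{>0}$. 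I would spell this reduction out carefully, using \eqref{equ:criterioninFr} to keep track of the fact that every weight in $\mathbf F_r$ lies $\le\lambda_{\mathbf c}$-comparable and has $\delta^\mu$-coordinates in $\Z$. The hypothesis $\Phi_A\cap\Psi^+_{\lambda_{\mathbf c}}=\emptyset$ is used inside Lemma~\ref{lem:keyde} to rule out reflections in roots $\epsilon_i-\epsilon_j$, leaving only the two cases $\beta=\epsilon_i+\epsilon_j$ treated there.

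The main obstacle I anticipate is the passage from ``$L(\mu)$ is a composition factor of $M^\frp(\lambda)$'' to ``$\mu+\rho$ is reachable from $\lambda+\rho$ by a chain of single admissible reflections each staying in $\Lambda^\frp$ after a $W_I$-correction'' — i.e.\ making the linkage/Jantzen machinery precise enough in the parabolic category $\mathcal O^\frp$ of type $D_n$ to feed exactly into Lemma~\ref{lem:keyde}. Everything downstream of that is bookkeeping: transitivity of $\prec$, the induction on chain length, and the final appeal to Theorem~\ref{first1}. I would therefore organize the write-up as (i) reduce Condition~\ref{keyconj} to a single $\prec$-step via transitivity; (ii) reduce a single $\prec$-step to a single reflection step via the linkage principle, noting the reflecting root lies in $\Psi^+_{\lambda_{\mathbf c}}$; (iii) apply Lemma~\ref{lem:keyde}; (iv) conclude, and note Theorem~\ref{first1}(1)--(4) follow.
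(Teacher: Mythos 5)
Your proposal follows the same overall route as the paper: reduce saturation of $\mathbf F_j$ to single reflection steps and feed those into Lemma~\ref{lem:keyde}. The paper, however, does not redo the reduction itself --- its proof consists of observing that all of the machinery of \cite[\S 6]{GRX} (which carries out exactly the linkage/Jantzen reduction you sketch) goes through verbatim once Lemma~\ref{lem:keyde} is substituted for \cite[Lemma 6.8]{GRX}, and that saturation of $\mathbf F_j$ for $j<r$ follows from the case $j=r$ because the hypotheses for $M_{r,\mathbf c}$ automatically hold for $M_{j,\mathbf c}$.

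The one point in your write-up that is not bookkeeping, and where your justification does not hold up, is the claim that every reflecting root $\beta$ occurring in the chain automatically lies in $\Psi^+_{\lambda_{\mathbf c}}$, i.e.\ satisfies $\langle\lambda_{\mathbf c}+\rho,\beta^\vee\rangle\in\Z_{>0}$. Working in the block of $\lambda_{\mathbf c}$ only gives integrality, $\langle\lambda_{\mathbf c}+\rho,\beta^\vee\rangle\in\Z$; the Jantzen machinery applied to $M^\frp(\mu)$ produces roots with $\langle\mu+\rho,\beta^\vee\rangle\in\Z_{>0}$, and since $\mu=\lambda_{\mathbf c}+\delta^\mu$ with $|\delta^\mu|\le r$, one can perfectly well have $\langle\lambda_{\mathbf c}+\rho,\beta^\vee\rangle\le 0$ while $\langle\mu+\rho,\beta^\vee\rangle>0$ (for instance $\langle\lambda_{\mathbf c}+\rho,\beta^\vee\rangle=0$ and $\langle\delta^\mu,\beta^\vee\rangle=1$, which is compatible with the description \eqref{equ:criterioninFr} of $\mathbf F_r$). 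So ``$\beta$ contributes to a genuine linkage step only if it is positive on $\lambda_{\mathbf c}+\rho$'' is not a consequence of the linkage principle, and those extra roots must be handled separately --- either by showing the reflected weight still lands in $\mathbf F_r$, or that the relevant Euler character vanishes. This is precisely the part of the argument that the paper outsources to \cite[\S 6]{GRX}; as written, your step (ii) does not yet deliver exactly the hypotheses of Lemma~\ref{lem:keyde}.
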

\begin{proof} We remark that all arguments in \cite[\S6]{GRX} remain valid,  except for those  
related  to \cite[Lemma 6.8]{GRX}.   However, Lemma \ref{lem:keyde} serves  as a  replacement for  \cite[Lemma~6.8]{GRX}, ensuring that  
\cite[Theorem~E]{GRX} still holds. This confirms that  ${\mathbf F}_{r}$ is saturated. As explained in \cite{GRX}, ${\mathbf F_{j}}$ is also saturated for any $1\le j<r$,  since the assumptions for $M_{r, \mathbf c}$ are automatically satisfied  for $M_{j, \mathbf c}$. Thus, Condition~\ref{keyconj} is satisfied.   The second conclusion follows immediately.   
\end{proof}

\begin{Cor}\label{first2} When $\mathfrak p$ is the maximal parabolic subalgebra of $\mathfrak{so}_{2n}$ with respect to  $I=\{\alpha_1,\ldots, \alpha_{n-1}\}$, Theorem~\ref{first1}(1)--(4)
hold. In this case, $k=1$.
\end{Cor}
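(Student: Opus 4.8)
The plan is to reduce the statement to Theorem~\ref{Thm:phidecomposition} by checking that its hypothesis $\Phi_A\cap\Psi_{\lambda_{\mathbf c}}^+=\emptyset$ is automatic here; in fact $\Phi_A=\emptyset$.

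First I would identify the maximal parabolic with $I=\{\alpha_1,\dots,\alpha_{n-1}\}$ as precisely the instance $k=1$ of the general setup \eqref{lever}: the only admissible choice is $0=p_0<p_1=n$, so $I=\Pi\setminus\{\alpha_n\}=\{\alpha_1,\dots,\alpha_{n-1}\}$. Consequently all the standing conventions (Assumption~\ref{1123}, the choice of $\mathbf c$ and of $q_1=n\gg0$ as in Theorem~\ref{Thm:simpleVerma2}/Theorem~B, so that $M^\frp(\lambda_{\mathbf c})$ is simple and tilting, and the parameters $u_2,\dots,u_{2k}$ in \eqref{uij21}) apply verbatim with $k=1$, and in particular the objects $\lambda_{\mathbf c}$, $M_{r,\mathbf c}$, $T^\frp(\lambda)$ appearing in Theorem~\ref{first1} are all defined.

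Next I would compute $\Phi_I$. Since $I=\{\epsilon_i-\epsilon_{i+1}\mid 1\le i\le n-1\}$, the subsystem $\Phi_I=\Phi\cap\mathbb ZI$ is the type-$A_{n-1}$ root system $\{\epsilon_i-\epsilon_j\mid 1\le i\ne j\le n\}$, with positive part $\Phi_I^+=\{\epsilon_i-\epsilon_j\mid i<j\}$. Hence every root $\epsilon_i-\epsilon_j$ with $i<j$ already lies in $\Phi_I$, so by the definition \eqref{equ:defofphiA} we get $\Phi_A=\{\epsilon_i-\epsilon_j\mid i<j\}\cap\Phi^+\setminus\Phi_I=\emptyset$. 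In particular $\Phi_A\cap\Psi_{\lambda_{\mathbf c}}^+=\emptyset$ trivially.

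Finally I would invoke Theorem~\ref{Thm:phidecomposition} with this vacuous hypothesis: it yields that Condition~\ref{keyconj} holds, and therefore Theorem~\ref{first1}(1)--(4) hold, which is exactly the claim. I do not expect a genuine obstacle: the only point requiring care is to match the maximal-parabolic data with the $k=1$ specialization of the paper's general conventions so that Theorem~\ref{Thm:phidecomposition} is literally applicable, after which the emptiness of $\Phi_A$ is immediate from the shape of the type-$A$ root subsystem.
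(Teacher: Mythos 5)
Your proposal is correct and follows the same route as the paper: the paper's proof likewise observes that for $I=\{\alpha_1,\dots,\alpha_{n-1}\}$ one automatically has $\Phi_A\cap\Psi^+_{\lambda_{\mathbf c}}=\emptyset$ and then invokes Theorem~\ref{Thm:phidecomposition}. Your write-up merely makes explicit the reason ($\Phi_I$ is the full type-$A_{n-1}$ subsystem, so $\Phi_A=\emptyset$), which the paper leaves implicit.
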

\begin{proof} Since $I=\{\alpha_1, \ldots, \alpha_{n-1}\}$, we automatically have  $\Phi_A\cap \Psi_{\lambda_\mathbf c}^+=\emptyset$. Thus, Theorem~\ref{first1}(1)--(4) follow from Theorem~\ref{Thm:phidecomposition}. 
\end{proof}

\section{Decomposition numbers for cyclotomic Brauer algebra with arbitrary parameters}
Building on the results from the  previous two sections, we are now  ready to  determine the decomposition numbers of $B_{k,r}((-1)^k\mathbf u_k)$.
Throughout this section, we fix a choice of   $\mathbf u_k$ such that   $\mathbf c$ and $p_0,p_1, p_2, \ldots, p_k$ are selected  as in   Theorem \ref{Thm:simpleVerma2}. Moreover, 
${\mathbf u}_{2k}$ is chosen as in \eqref{tildeu}.
    \subsection{The isomorphism}    
Recall that $q_i=p_{i}-p_{i-1}$ for $i=1,\ldots,k$.
\begin{Lemma} \label{disj}
There exists  a   
    $\mathbf c=(c_1, c_2, \ldots, c_k)\in  \mathbb C^k$, along with the associated positive integers  $q_1,q_2,\ldots, q_k\gg0$, such that $u_{j+1}$ is disjoint from $\mathbf u_j:=(u_1, \ldots, u_j)$  for any $k\le j\le 2k-1$.    
\end{Lemma}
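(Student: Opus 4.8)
The plan is to make the parameters $u_{k+1},\ldots,u_{2k}$ completely explicit and then verify the disjointness conditions by a direct computation, exploiting that we are still free to spread out the (large) positive integers $q_1,\ldots,q_k$. First I would substitute \eqref{cjuj} into \eqref{uij21}: for $k+1\le j\le 2k$ and $i:=2k-j+1\in\{1,\ldots,k\}$,
\[
u_j=-c_i+p_i-n+\tfrac12=-\bigl(u_i+p_{i-1}-n+\tfrac12\bigr)+p_i-n+\tfrac12=(p_i-p_{i-1})-u_i=q_i-u_i,
\]
so $u_{2k+1-i}=q_i-u_i$ for $1\le i\le k$; in particular each ``new'' parameter $u_j$ with $j>k$ differs from $-u_{2k+1-j}$ by a positive integer, and all of the quantities compared below will differ by integers from fixed combinations of the original $u_1,\ldots,u_k$ plus $q$'s.

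Next I would fix $j$ with $k\le j\le 2k-1$, write $j+1=2k+1-b$ with $1\le b\le k$ (so $u_{j+1}=q_b-u_b$), fix $1\le l\le j$ and $\varepsilon\in\{+1,-1\}$, and check that $u_{j+1}+\varepsilon u_l\notin\mathbb Z$ or $|u_{j+1}+\varepsilon u_l|\ge r$. If $l\le k$, then $u_{j+1}+\varepsilon u_l=q_b+(\varepsilon u_l-u_b)$, which is not an integer unless $\varepsilon u_l-u_b\in\mathbb Z$; in the latter case $\varepsilon u_l-u_b$ is one of finitely many fixed integers, so $|q_b+(\varepsilon u_l-u_b)|\ge r$ once $q_b$ is large. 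If $l>k$, write $l=2k+1-b'$; since $l<j+1$ we have $b<b'$ and $u_l=q_{b'}-u_{b'}$. For $\varepsilon=+1$, $u_{j+1}+u_l=(q_b+q_{b'})-(u_b+u_{b'})$ is not an integer unless $u_b+u_{b'}\in\mathbb Z$, in which case its absolute value is at least $q_b+q_{b'}-|u_b+u_{b'}|\ge r$ once the $q$'s are large. The only genuinely delicate case is $\varepsilon=-1$: $u_{j+1}-u_l=(q_b-q_{b'})-m_{bb'}$ with $m_{bb'}:=u_b-u_{b'}$, which is not an integer unless $m_{bb'}\in\mathbb Z$, and then one needs $|(q_b-q_{b'})-m_{bb'}|\ge r$, i.e.\ the difference $q_b-q_{b'}$ must avoid a fixed finite set of integers.

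Finally I would arrange this by re-running the constructions in the proofs of Lemma~\ref{posi11} and Lemma~\ref{pos12} which together yield Theorem~\ref{Thm:simpleVerma2}: there $\mathbf c$ and $q_1,\ldots,q_k$ are produced by imposing, successively, finitely many \emph{one-sided} inequalities of the form ``$q_t\ge$ (an expression in the already chosen $q_s$'s and fixed integers)'' (compare \eqref{Equ:conqtqs}, \eqref{equ:chooseofqtqs>1}, \eqref{Equ:case2q1}, \eqref{Equ:case3q1}, \eqref{Equ:case3q12}). For each pair $b<b'$ with $m_{bb'}\in\mathbb Z$ there is a direct such inequality relating $q_b$ and $q_{b'}$, and it already pins down the sign of $(q_b-q_{b'})-m_{bb'}$; I would therefore simply replace every lower bound in those arguments by the same bound plus $r$. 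The resulting choices are still admissible — they are merely larger — and now force $|(q_b-q_{b'})-m_{bb'}|\ge r$, while additionally taking each $q_i$ larger than $r$ plus a uniform bound on the finitely many fixed integers above settles the remaining ($l\le k$, and $\varepsilon=+1$) cases. The main obstacle is precisely this compatibility step: one must be sure the new separation requirements on the $q_i$ do not clash with the conditions needed to keep $M^{\mathfrak p}(\lambda_{\mathbf c})$ simple and tilting, and this works only because all of those conditions are lower bounds, leaving room to push the $q_i$ apart as far as necessary.
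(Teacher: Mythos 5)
Your proposal is correct and follows essentially the same route as the paper: both compute $u_{2k+1-i}=q_i-u_i$ explicitly and then check the four sign/index cases, with the only delicate case being $u'_b-u'_{b'}=(q_b-q_{b'})-m_{bb'}$, which both arguments handle by strengthening the one-sided lower bounds on the $q_i$ from Lemmas~\ref{posi11} and~\ref{pos12} (your explicit remark that one can add $r$ to each bound is exactly what the paper's terse appeal to \eqref{equ:chooseofqtqs>1}--\eqref{Equ:case3q12} implicitly requires).
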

\begin{proof}
Write $u'_l= u_{2k-l+1}$ for $1\le l\le k$, where $u_{k+1}, \ldots, u_{2k}$ are defined in \eqref{uij21}.
Then, we have 
$$u'_l=-c_l+p_l-n+\frac{1}{2}.$$
We divide the proof into the following cases.

Suppose  $|u'_l-u_t'|\in \Z$    for some  $1\le l<t\le k$. 
If  $l>1$, then we can choose $q_t,q_l$ such that
 \[|u'_l-u_t'|=|c_t-c_l +p_l-p_t|=  |q_l-q_t+u_t-u_l |\ge r \]
 under the choices made in   \eqref{equ:chooseofqtqs>1}.
The case  $l=1$ can be handled similarly, using the choices specified in \eqref{Equ:case2q1}--\eqref{Equ:case3q12}.

Suppose $u'_l+u_t'\in \Z$
for some  $1\le l<t\le k$.  Since $q_t,q_l\gg0$, we have  
\[|u_t'+u_l'|= |-u_t-u_l+q_t+q_l | \ge r.\]

Suppose $u'_l-u_t\in\Z $ for some $1\le l, t\le k$. Since $q_l\gg0$, we have 
\[|u_l'-u_t|= |-u_t-u_l+q_l | \ge r.\]

Suppose  $u'_l+u_t\in\Z $ for some $1\le l, t\le k$. Since $q_l\gg0$,    we have 
\[|u'_l+u_t|= |u_t-u_l+q_l| \ge r .\]
In summary,  in every case,  $u_{j+1}$ is disjoint from $\mathbf u_j$. 
\end{proof}    
Henceforth, we maintain  the assumption on $\mathbf c$ and $q_1, q_2,\ldots, q_k$  in Lemma \ref{disj}. This allows us to apply  Lemma~\ref{disj} and Theorem~\ref{Thm:subisok2k1}, which  directly leads to the following result.

\begin{Theorem}\label{Equ:idemkto2k} There exists 
 an idempotent $e$ in  
 $ B_{2k,r}( \mathbf u_{2k})$ with  $  {\mathbf u}_{2k}$ given in \eqref{tildeu}, such that 
$    B_{k,r}((-1)^k\mathbf u_k) \cong e  B_{2k,r}(\mathbf u_{2k}) e$.  
\end{Theorem}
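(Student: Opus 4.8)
The plan is to derive this statement directly by combining Lemma~\ref{disj} with Theorem~\ref{Thm:subisok2k1}; the only content that needs supplying is the verification that the hypothesis of Theorem~\ref{Thm:subisok2k1} is satisfied for the parameters in play.

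First I would recall the setup fixed at the start of this section: $\mathbf u_k$ is arbitrary, the tuple $\mathbf c$ and the integers $0=p_0<p_1<\cdots<p_k=n$ (equivalently $q_i=p_i-p_{i-1}$) are chosen as in Theorem~\ref{Thm:simpleVerma2}, the entries $u_{k+1},\ldots,u_{2k}$ are given by \eqref{uij21}, and $\mathbf u_{2k}$ is the tuple \eqref{tildeu}. By Lemma~\ref{disj}, after possibly enlarging the $q_i$ as in \eqref{equ:chooseofqtqs>1}--\eqref{Equ:case3q12}, one may arrange that $u_{j+1}$ is disjoint from $\mathbf u_j=(u_1,\ldots,u_j)$ for every $k\le j\le 2k-1$. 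This is exactly the disjointness condition assumed in Theorem~\ref{Thm:subisok2k1}.

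Second I would apply Theorem~\ref{Thm:subisok2k1} to $B_{2k,r}(\mathbf u_{2k})$: it produces an idempotent $e\in B_{2k,r}(\mathbf u_{2k})$ with $B_{k,r}((-1)^k\mathbf u_k)\cong e B_{2k,r}(\mathbf u_{2k})e$. Tracing through the proof of Theorem~A, one sees that $e=\mathbf f^{k+1}\mathbf f^{k+2}\cdots\mathbf f^{2k}$, equivalently $e=\gamma_1'\cdots\gamma_r'$, where each $\gamma_i'$ is the idempotent projecting onto the part of the generalized $x_i$-eigenspace whose eigenvalue is disjoint from $u_j$ for all $k+1\le j\le 2k$; this records an explicit form of $e$ should one want it.

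I do not anticipate any genuine obstacle here: all the substantive work — the algebra isomorphism of Theorem~\ref{sioiemd}, its $k$-fold iteration yielding Theorem~\ref{Thm:subisok2k1}, and the combinatorial verification of disjointness in Lemma~\ref{disj} — is already in place. The proof of Theorem~\ref{Equ:idemkto2k} is thus merely the observation that Lemma~\ref{disj} supplies precisely the hypothesis consumed by Theorem~\ref{Thm:subisok2k1}, together with the bookkeeping check that the tuple $\mathbf u_{2k}$ of \eqref{tildeu} is literally the one appearing in both statements.
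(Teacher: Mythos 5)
Your proposal matches the paper's own argument exactly: the paper derives Theorem~\ref{Equ:idemkto2k} by fixing $\mathbf c$ and $q_1,\ldots,q_k$ as in Lemma~\ref{disj} so that the disjointness hypothesis of Theorem~\ref{Thm:subisok2k1} holds, and then applying that theorem to $B_{2k,r}(\mathbf u_{2k})$. Your additional remark on the explicit form $e=\gamma_1'\cdots\gamma_r'$ is consistent with the description given in the paper's proof of Theorem~A.
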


``\textbf{Proof of Theorem~C}":   As demonstrated  in  \cite[Theorem~5.4]{RS-cyc},  the required  isomorphism exists  if $M^\mathfrak p(\lambda_{\mathbf c})$ is simple, and $q_t\ge 2r$ for all $1\le t\le k$. Therefore, Theorem~C(1) follows directly from Theorem \ref{Thm:simpleVerma2}. Finally, Theorem~C(2) is a consequence of Theorem~\ref{Equ:idemkto2k} and Theorem~C(1). \qed

\subsection{Decomposition numbers of $B_{k,r}((-1)^k\mathbf u_k)$}
In this subsection, we determine the decomposition numbers of $B_{k,r}((-1)^k\mathbf u_k)$ using Theorem \ref{Equ:idemkto2k} and Theorem \ref{first1}.
Recall the bijection $\tilde \ :{\mathbf F}_{r}\rightarrow \Lambda_{2k,r}$ as defined in \eqref{equ:defoftilde}.
\begin{Lemma}\label{frk} For any positive integer $k$,  ${\mathbf F}_{r,k}=\{\mu \in {\mathbf F}_{r}\mid \delta^\mu_i\ge 0 \text{ for all } 1\le i \le n  \}$
where ${\mathbf F}_{r,k}$ is defined as  in \eqref{barfk}. \end{Lemma}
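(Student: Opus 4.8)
The plan is to reduce the statement to a property of the cell modules of $B_{2k,r}(\mathbf u_{2k})$ and then read it off from their Jucys--Murphy bases. By \eqref{barfk}, for $\mu\in{\mathbf F}_{r}$ we have $\mu\in{\mathbf F}_{r,k}$ exactly when $S(\mu)e\neq0$. Put $\lambda=\tilde\mu$ and let $f$ be determined by $2f=r-|\delta^\mu|$; then Theorem~\ref{first1}(1) (equivalently \eqref{isocell1}) gives an isomorphism $S(\mu)\cong C(f,\lambda')$ of right $B_{2k,r}(\mathbf u_{2k})$-modules, so $S(\mu)e\neq0$ if and only if $C(f,\lambda')e\neq0$. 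Recall from the proof of Theorem~\ref{Thm:subisok2k1} that $e=\gamma_1'\cdots\gamma_r'$, where $\gamma_i'$ is the idempotent in the commutative subalgebra generated by $x_1,\ldots,x_r$ which projects onto the sum of those generalized $x_i$-eigenspaces whose eigenvalue is disjoint from each of $u_{k+1},\ldots,u_{2k}$. Since the $\gamma_i'$ commute, the core question becomes: for a $2k$-partition $\nu$, when does $C(f,\nu)$ contain a simultaneous generalized $(x_1,\ldots,x_r)$-eigenvector all of whose eigenvalues are disjoint from $u_{k+1},\ldots,u_{2k}$?

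For this I would use the Jucys--Murphy basis $\{m_\t\}$ of $C(f,\nu)$ from \cite[\S5]{RSi}, indexed by updown tableaux of shape $\nu$ (up to transposition, which is immaterial here): each $m_\t$ is a simultaneous generalized eigenvector, the eigenvalue of $x_i$ lies in $u_c+\Z$ if the $i$-th step of $\t$ adds a box in the $c$-th component and in $\omega_0-u_c+\Z$ if it removes one (recall $\omega_0=2n$ here), and the $\Z$-part has absolute value less than $r$ because every multipartition occurring in $\t$ has fewer than $r$ boxes. Hence $m_\t e=m_\t$ when all these eigenvalues are disjoint from $u_{k+1},\ldots,u_{2k}$ and $m_\t e=0$ otherwise, so $C(f,\nu)e\neq0$ if and only if such a $\t$ exists. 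I then claim the dichotomy: (i) if $\nu$ has a nonempty component $c$ with $k+1\le c\le 2k$, then in every $\t$ there is a first step at which the previously empty $c$-th component becomes nonempty; its only addable box is $(1,1)$, so that step contributes the $x_i$-eigenvalue $u_c$, which is not disjoint from $u_c$, whence $C(f,\nu)e=0$. (ii) If $\nu$ is supported on the first $k$ components, there is a $\t$ using only those components (build up $\nu$ box by box, then append $f$ add/remove pairs in the first component), and for such a $\t$ every eigenvalue lies in $\pm u_c+\Z$ with $c\le k$ and $\Z$-part bounded by $r$; using the identities $u_{2k-l+1}+u_l=q_l$ (from \eqref{uij21}) together with $\min\{q_1,\ldots,q_k\}\gg 0$ — which we may assume exceeds $2r+2\max_{1\le i\le 2k}|u_i|$, a constraint compatible with the choices in Theorem~\ref{Thm:simpleVerma2} and Lemma~\ref{disj} — a direct check shows each such number is disjoint from every $u_l$ with $l>k$, so $C(f,\nu)e\neq0$.

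It remains to translate ``$\nu=(\tilde\mu)'$ is supported on the first $k$ components'' into a condition on $\delta^\mu$. Transposing components does not change which are nonempty, so this is equivalent to ``$\tilde\mu^{(j)}=\emptyset$ for $k+1\le j\le 2k$'', and by \eqref{equ:defoftilde} the component $\tilde\mu^{(j)}$ for $j>k$ consists, after negation and reversal, of the negative entries of the block $(\delta^\mu_{p_{j'-1}+1},\ldots,\delta^\mu_{p_{j'}})$ with $j'=2k-j+1$, so it is empty exactly when that block has no negative entry. Since the blocks exhaust $\{1,\ldots,n\}$, this says $\delta^\mu_i\ge0$ for all $1\le i\le n$, and we conclude ${\mathbf F}_{r,k}=\{\mu\in{\mathbf F}_{r}\mid\delta^\mu_i\ge0\ \text{for all}\ 1\le i\le n\}$.

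The main obstacle I anticipate is step (ii): one must commit to the exact normalization of the Jucys--Murphy eigenvalues of $B_{2k,r}(\mathbf u_{2k})$ — in particular the form of the ``removing'' eigenvalue and the value $\omega_0=2n$ — and then carry out the disjointness verification for $\pm u_c+m$ (with $c\le k$, $|m|<r$) against the large parameters $u_{k+1},\ldots,u_{2k}$, which forces the $q_i$ to be chosen a little larger than the bare disjointness supplied by Lemma~\ref{disj}; since the $q_i$ were only constrained by ``$\gg 0$'', this is harmless. A secondary point is to confirm that the operation $\nu\mapsto\nu'$ in \eqref{isocell1} transposes the components without permuting them, so that the final translation yields $\delta^\mu\ge0$ rather than $\delta^\mu\le0$.
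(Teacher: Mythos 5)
Your argument follows the same combinatorial dichotomy as the paper's (a nonempty component $c>k$ of $\tilde\mu$ forces the eigenvalue $u_c$ at the first step creating that component, hence $e$ kills everything; if all late components are empty, a path supported on the first $k$ components has all eigenvalues disjoint from $u_{k+1},\ldots,u_{2k}$), and your translation back to $\delta^\mu_i\ge 0$ via \eqref{equ:defoftilde} is correct. The difference is where you run it: the paper works directly on the category-$\mathcal O$ side, decomposing $M_{r,\mathbf c}$ into simultaneous generalized eigenspaces for $x_1,\ldots,x_r$ (acting via the Casimir, \eqref{equ:aj+1}) and attaching the content sequence \eqref{equ:defofcont} to each Verma path, whereas you transfer to the cell modules $C(f,\lambda')$ of $B_{2k,r}(\mathbf u_{2k})$ and use the Jucys--Murphy basis of \cite{RSi}.

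The genuine gap is in your very first reduction. You pass from $S(\mu)e\neq 0$ to $C(f,\lambda')e\neq 0$ via the isomorphism $S(\hat\lambda)\cong C(f,\lambda')$ of Theorem~\ref{first1}(1) (equivalently \eqref{isocell1}), but that isomorphism is only available in this paper under Condition~\ref{keyconj}. Lemma~\ref{frk} is stated unconditionally (``for any positive integer $k$''), and the paper's proof is unconditional precisely because it never leaves the module $M_{r,\mathbf c}$: it only needs that $M_{r,\mathbf c}$ has a parabolic Verma flag compatible with the eigenspace decomposition \eqref{equ:decom}, not that $S(\hat\lambda)$ matches a diagrammatically defined cell module. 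As written, your proof therefore establishes the lemma only under Condition~\ref{keyconj}. To close the gap you would either have to justify $S(\hat\lambda)\cong C(f,\lambda')$ (or at least the equivalence $S(\hat\lambda)e\neq0\Leftrightarrow C(f,\lambda')e\neq0$) without the saturation hypothesis, or redo the eigenvalue analysis on $\Hom_{\mathcal O^\frp}(M^\frp(\lambda),M_{r,\mathbf c})$ itself, which is what the paper does. The remaining details of your argument (the normalization $\omega_0=2n$ so that the ``removal'' eigenvalues lie in $-u_c+\Z$, the identity $u_{2k-l+1}+u_l=q_l$, and the strengthening of the $q_i\gg0$ choice so that disjointness survives integer shifts bounded by $r$) are sound and compatible with the constraints already imposed in Theorem~\ref{Thm:simpleVerma2} and Lemma~\ref{disj}.
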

 \begin{proof} 
 Recall from the proof of  \cite[Theorem 5.14]{RS-cyc} that a bijection exists  between the set of Verma paths in $M_{r, \mathbf c}$  of length $r$ terminating  at  $M^\frp(\lambda)$, and   the set $
     \mathscr T_{2k,r}^{up}(\tilde\lambda)$  of all up-down tableaux of type $\tilde  \lambda$, where $\tilde \lambda$ denotes 
      the $2k$-partition corresponding to $\lambda$ under the map defined  as   in \eqref{equ:defoftilde}.  
     
     For each  $\t=(\t_0,\t_1, \ldots, \t_r)\in \mathscr T_{2k,r}^{up}(\tilde \lambda) $, we associate  a sequence of contents
     $\text{cont}(\t)=(b_1,\ldots, b_r)$, defined by 
     \begin{equation}
     \label{equ:defofcont}
      b_i=\begin{cases}   
         u_t+ h-l & \text{if $\t_i\setminus \t_{i-1}=p$,}\\
         -(u_t+h-l) & \text{if $\t_{i-1}\setminus \t_{i}=p $,}\\
         \end{cases}   
     \end{equation}
    where  $p= (l,h, t)$.
    
Notably,  $M_{r, \mathbf c}$ admits  a finite parabolic Verma flag, and the operator $ x_j$ acts  on $M_{r,\mathbf c}$ via $\Omega+n-\frac{1}{2} $ on $M_{j-1,\mathbf c}\otimes V $, where $ \Omega=\frac{1}{2}\sum_{i,j\in [n]}F_{i,j}\otimes F_{j,i}$,   the Casimir element in $\U(\mathfrak g)\otimes \U(\mathfrak g) $ (see \cite[(3.12)]{RS-cyc}). Consequently,    $M_{r, \mathbf c}$ decomposes into a direct sum of   common generalized eigenspaces for  the operator $x_1, x_2, \ldots, x_r$ as follows;
 \begin{equation}
     \label{equ:decom}
     M_{r,\mathbf c}=\oplus_{\mathbf i\in \C^{r}} (M_{r,\mathbf c})_{\mathbf i}, 
 \end{equation}
 where  $(M_{r,\mathbf c})_{\mathbf i}$
 denotes  the generalized eigenspace for the eigenvalue tuple $\mathbf i$. 
 
We claim that the parabolic Verma module $ M^\frp(\lambda)$,  associated with the Verma path specified  by $\t\in \mathscr T_{2k,r}^{up}(\tilde\lambda) $,  arises  as a subquotient of the direct summand
$(M_{r,\mathbf c})_{\mathbf b}$
 in \eqref{equ:decom}, where $\mathbf b=(b_1, b_2, \ldots, b_r)$ is the content of $\t$, defined as  in \eqref{equ:defofcont}.

Indeed, as explained earlier,  by \cite[Theorem 3.8]{RS-cyc}  $ x_j$ acts on $M_{r,\mathbf c}$ via   $\Omega+n-\frac{1}{2} $ on $M_{j-1,\mathbf c}\otimes V $.
For  $0\le j\le r-1 $, let $\nu\in {\mathbf F_{j}}$ such that $\t_j=\tilde \nu$. Then 
$\t_{j+1}= \tilde \mu  $, where  $\mu=\nu+\sigma \epsilon_i$ for some $i$, $\sigma\in\{\pm 1\}$.
By \cite[(4.15)]{RS-cyc}, the operator  $x_{j+1}$ acts   
on $M^\frp(\mu)$ via the scalar:
\begin{equation}
\label{equ:aj+1}\begin{aligned} 
a_{j+1}&=\frac{1}{2}( ( \nu+\sigma \epsilon_i, \nu+\sigma \epsilon_i+2\rho)- (\nu,\nu+2\rho)-(\epsilon_1, \epsilon_1+2\rho)+n-\frac{1}{2} )\\
&=\sigma \nu_i+ (\sigma \epsilon_i-\epsilon_1, \rho)+n-\frac{1}{2}.\end{aligned} 
 \end{equation}

Suppose that $\sigma=1$. Then either $\t_{j+1}\setminus \t_j =p $ or $\t_{j}\setminus \t_{j+1} =p $ for some $p$. 
According to the definition of the bijection $\tilde \ $, we have 
$$\begin{cases}   1\le t\le k,  & \text{ if $\t_{j+1}\setminus \t_j =p $ and $p=(h,l, t)$,}\\
k+1\le t\le 2k,  & \text{ if $\t_{j}\setminus \t_{j+1} =p $ and $p=(h,l, t)$.}\\
\end{cases}$$
 
In the first case,  $i=p_{t-1}+h$ and $\nu_i=(\lambda_{\mathbf c})_i+ l-1$. Thus, by \eqref{equ:aj+1} we have 
  $$\begin{aligned}
  a_{j+1}&= (\lambda_\mathbf c)_i+l-1 -p_{t-1}-h+1+n-\frac{1}{2}\\
&= c_t+l-p_{t-1}-h+n-\frac{1}{2}\\
&\overset{\eqref{cjuj}} =u_t+l-h =b_{j+1},     
  \end{aligned}$$
which proves  the claim in this case.
Analogous reasoning applies to
  $ \t_{j+1}= \t_j\setminus p$ or $\sigma=-1$. 
This completes the proof of the claim.

Suppose that $\lambda\in {\mathbf F}_{r}\setminus {\mathbf F}_{r,k}$. Then, there exists some $k+1\le j\le 2k$ such that  $u_j$ appears as an eigenvalue for some $x_i$ acting on $M^\frp(\lambda)$ associated with   any Verma path, as  stated   in the claim. 
From the claim and the definition of the idempotent $e\in B_{2k,r}(\mathbf u_{2k})^{op}\cong \End_{\mathcal O}(M_{r,\mathbf c})$, it follows  that $M^\frp(\lambda)$ (associated to any Verma path) is annihilated  by $e$ since   $u_j$ is disjoint from $u_1, u_2, \ldots, u_{j-1}$ for $k+1\le j\le 2k$ by Lemma \ref{disj}. 
Consequently,  $S(\lambda)e=\Hom_{\mathcal O^\frp}(M^\frp(\lambda), M_{r, \mathbf c})e =0$ for any  $\lambda \in {\mathbf F}_{r}\setminus {\mathbf F}_{r,k}$.

Now suppose that $\lambda\in {\mathbf F}_{r,k}$. In this case, there is a Verma path $\t$ for $M^\frp(\lambda)$ such that each component   $\t_i$ of $\t$ is a multipartition with the last $k$ parts being $\emptyset$. By the claim, it follows  that the parabolic Verma module $M^\frp(\lambda)$ associated with this Verma path belongs to $(M_{r,\mathbf c})_\mathbf i$ with each $i_j$ is disjoint from $u_{k+1}, \ldots, u_{2k}$.  Thus, $S(\lambda)e=\Hom_{\mathcal O^\frp}(M^\frp(\lambda), M_{r, \mathbf c})e \neq0 $. 

This completes the proof.
 \end{proof}

Using standard arguments on idempotent functors, we deduce that $$\{D(\mu)e\mid \mu\in \bar {\mathbf F}_{r,k}\}$$ forms a complete set of pairwise non-equivalent irreducible modules for  $B_{k,r}((-1)^k\mathbf u_k)$, where $\bar {\mathbf F}_{r,k}$ is defined  in \eqref{barfk}. Applying \cite[Corollary~5.10]{RS-cyc} and utilizing the  idempotent functor determined by $e$ in \eqref{Equ:idemkto2k}, we obtain   the following result.  
\begin{Theorem}
\label{Mainthm}
For any  $(\lambda, \mu)\in  {\mathbf F}_{r,k} \times  \bar {\mathbf F}_{r,k}$,  
      $[S(\lambda)e: D(\mu)e]=(T^{\mathfrak p}( \mu):M^{\mathfrak p}( \lambda))$.
\end{Theorem}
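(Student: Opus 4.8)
The plan is to deduce the statement from the multiplicity formula for the cellular algebra $\End_{\mathcal O^\frp}(M_{r,\mathbf c})\cong B^{op}_{2k,r}(\mathbf u_{2k})$ together with the elementary behaviour of composition multiplicities under idempotent truncation. First I would recall, from \cite[Corollary~5.10]{RS-cyc} applied to $\End_{\mathcal O^\frp}(M_{r,\mathbf c})$ (equivalently, from Theorem~\ref{first1}(4) combined with the identifications $S(\hat\lambda)\cong C(f,\lambda')$ of \eqref{isocell1} and $D(\hat\lambda)\cong D(f,\lambda')$ of Theorem~\ref{first1}(2)), that
\[
[S(\lambda):D(\mu)]=(T^\frp(\mu):M^\frp(\lambda))\qquad\text{for all }\lambda\in{\mathbf F}_{r},\ \mu\in\bar{\mathbf F}_{r}.
\]
By Theorem~\ref{Equ:idemkto2k} (via Theorem~\ref{The-RSmain}) the algebra $B_{k,r}((-1)^k\mathbf u_k)$ is the idempotent truncation $e\,\End_{\mathcal O^\frp}(M_{r,\mathbf c})\,e$, so the whole question is transported to the exact functor $N\mapsto Ne$ on finite-dimensional $\End_{\mathcal O^\frp}(M_{r,\mathbf c})$-modules.

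Next I would assemble the standard facts about this functor. Since $D(\mu)=S(\mu)/\Rad\phi_\mu$ is a quotient of $S(\mu)$ and $N\mapsto Ne$ is right exact, Lemma~\ref{frk} gives $D(\mu)e=0$ whenever $\mu\notin{\mathbf F}_{r,k}$; combined with the definition \eqref{barfk} of $\bar{\mathbf F}_{r,k}$, this shows $D(\mu)e\neq 0$ exactly for $\mu\in\bar{\mathbf F}_{r,k}$, that for such $\mu$ the module $D(\mu)e$ is simple, and that $\{D(\mu)e\mid\mu\in\bar{\mathbf F}_{r,k}\}$ is a complete set of pairwise non-isomorphic simple $B_{k,r}((-1)^k\mathbf u_k)$-modules (as already recorded in the paragraph preceding the theorem). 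The key homological input is then the elementary fact that, for any finite-length module $N$, applying $-e$ to a composition series of $N$ deletes precisely the factors annihilated by $e$ and leaves the multiplicities of the remaining factors unchanged; hence $[Ne:D(\mu)e]=[N:D(\mu)]$ for every $\mu\in\bar{\mathbf F}_{r,k}$.

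Finally, I would apply this with $N=S(\lambda)$ for $\lambda\in{\mathbf F}_{r,k}$ (so that $S(\lambda)e\neq0$ by Lemma~\ref{frk}), obtaining
\[
[S(\lambda)e:D(\mu)e]=[S(\lambda):D(\mu)]=(T^\frp(\mu):M^\frp(\lambda))
\]
for all $(\lambda,\mu)\in{\mathbf F}_{r,k}\times\bar{\mathbf F}_{r,k}$, which is the assertion. There is no serious obstacle here: everything reduces to already-established results, and the only point requiring care is the bookkeeping identifying the simple modules surviving truncation with the index set $\bar{\mathbf F}_{r,k}$ — this is exactly where Lemma~\ref{frk} is used, since it forces $\bar{\mathbf F}_{r,k}\subseteq{\mathbf F}_{r,k}$ and guarantees that every composition factor of $S(\lambda)$ not killed by $e$ is indexed within $\bar{\mathbf F}_{r,k}$.
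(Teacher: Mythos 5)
Your proposal is correct and follows essentially the same route as the paper: the paper likewise obtains Theorem~\ref{Mainthm} by combining the multiplicity formula $[S(\lambda):D(\mu)]=(T^\frp(\mu):M^\frp(\lambda))$ from \cite[Corollary~5.10]{RS-cyc} with standard idempotent-functor arguments (identifying the surviving simples as $\{D(\mu)e\mid\mu\in\bar{\mathbf F}_{r,k}\}$ and using that truncation by $e$ preserves composition multiplicities of the surviving factors). The only cosmetic difference is that you spell out the bookkeeping via Lemma~\ref{frk} and the definition \eqref{barfk} more explicitly than the paper does; note only that your parenthetical appeal to Theorem~\ref{first1}(4) is not needed and would import Condition~\ref{keyconj}, which the direct citation of \cite[Corollary~5.10]{RS-cyc} avoids.
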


By applying Theorems~\ref{Mainthm} and~\ref{first1}, we derive  the explicit decomposition numbers of $B_{k,r}((-1)^k\mathbf u_k)$, provided  Condition~\ref{keyconj} holds when $r$ is odd or $r$ is even, and $\omega_i\neq 0$ for some  $0\le i\le k-1$. Having established these results, we now proceed to prove  Theorem~D  and Theorem~E. 
 
 ``\textbf{Proof of Theorem \ref{first123}}": 
Under the assumption that Condition~\ref{keyconj} holds,   Theorem \ref{first1}(1)(2) yields  
 the isomorphisms 
 \begin{equation}\label{equ:isomordxcell}
 S(\hat\lambda)\cong C(f, \lambda'), \ \ \text{ and $D(\hat \mu)\cong D(\ell,\mu')$}
 \end{equation}
 for all  $(f, \lambda')\in \Lambda_{2k,r}$ and $(\ell, \mu')\in \bar \Lambda_{2k, r}$. Here  the bijection  $\hat \ : \Lambda_{2k,r}\rightarrow {\mathbf F}_{r}$, defined as  in \cite[(4.4)]{GRX}, serves as  
  the inverse of the map 
 $\tilde \ $ given in \eqref{equ:defoftilde}.

Recall the Jucys-Murphy basis of $B_{2k, r}(\mathbf u_{2k})$, defined in \cite[Proposition 5.8]{RSi} as 
\[\{m_{\s,\t} \mid \s,\t \in \mathscr {T}^{up}_{2k,r}(\lambda), (f,\lambda)\in \Lambda_{2k,r}\}.  \]
  We identify   $\mathscr {T}^{up}_{k,r}(\lambda) $ as a  subset of
$\mathscr {T}^{up}_{2k,r}(\lambda) $ by embedding   $\lambda\in \Lambda_{k,r}$ into $\Lambda_{2k,r}$ via $(\lambda, \emptyset,\ldots, \emptyset)$.

In the proof of Theorem A, we established  that  $e=\gamma_1'\ldots \gamma_r'$, where each $\gamma_i'$ is the idempotent projecting  onto the generalized eigenspace corresponding  to $x_i$ (for  $1\le i\le r$)  with    eigenvalues disjoint from $u_{j}$ for $k+1\le j\le 2k$.  
By   \cite[Theorem 5.12]{RSi}, this implies  
$$ em_{\s,\t}e=\begin{cases}
     m_{\s,\t} & \text{if  $\s,\t\in \mathscr {T}^{up}_{k,r}(\lambda)$}\\
    0  & \text{otherwise.}
\end{cases} $$
Applying  $e$ to the Jucys-Murphy basis of $B_{2k,r}(\mathbf u) $ produces  a cellular basis of $ B_{k,r}((-1)^k\mathbf u_k)$.
An analogous  result holds for the
cell modules. Combining this  with  Lemma \ref{frk},  we conclude  the proof of Theorem \ref{first123}(1).

 The argument further  shows that  $C(f, \lambda')$
    admits  a non-trivial bilinear form 
      if $C(f, \lambda')e$ does. 
     Additionally,  by \cite[Theorem~3.12]{RSi}, the cardinality of the set 
      $$\{D(f, \lambda')\neq 0\mid \lambda\in {\mathbf F}_{r,k} \}$$ equals  the number of  $\mathbf u$-restricted partitions when $r$ is odd or $r$ is even and $\omega_i\neq 0$ for at least one  $0\le i\le r-1$. This count coincides with  the number of non-equivalent irreducible modules of $ B_{k,r}((-1)^k\mathbf u_k)$ (see  \cite[Theorem 3.12]{RSi}). Consequently, $D(f,\lambda')e\neq 0$
if and only if $(f,\lambda') \in\bar \Lambda_{2k,r} $, which establishes the claims  (2) and (3).   Finally, claim 
(4) follows directly  from  \eqref{equ:isomordxcell} and Theorem \ref{Mainthm}.\qed


``\textbf{Proof of Theorem \ref{mainthm:E}":}   
This result  is guaranteed by Theorem \ref{Thm:phidecomposition} whenever $\Phi_A\cap \Psi^+_{\lambda_\mathbf c}=\emptyset$. \qed

\small
\end{document}